\documentclass[a4paper,12pt, british, reqno]{amsart}

\usepackage{babel}
\usepackage{mathabx}
\usepackage{amssymb}
\usepackage{amsfonts}
\usepackage{enumitem}
\usepackage{hyperref}
\usepackage[utf8]{inputenc}
\usepackage{newunicodechar}
\usepackage{mathtools}
\usepackage{varioref}
\usepackage[centering]{geometry}
\usepackage[arrow,curve,matrix]{xy}
\usepackage{csquotes}
\usepackage{dynkin-diagrams}
\usepackage{extarrows}

\usepackage{colortbl}
\usepackage{graphicx}
\usepackage{tikz}
\usepackage{tikz-cd}

\usepackage{mathrsfs}

\usepackage{multirow}
\usepackage{multicol}
\usepackage{longtable} 
\usepackage{caption}
\usepackage{hhline}
\newcolumntype{M}[1]{>{\centering\arraybackslash}m{#1}} 

\definecolor{linkred}{rgb}{0.7,0.2,0.2}
\definecolor{linkblue}{rgb}{0,0.2,0.6}

\numberwithin{figure}{section}

\setdescription{labelindent=\parindent, leftmargin=2\parindent}
\setitemize[1]{labelindent=\parindent, leftmargin=2\parindent}
\setenumerate[1]{labelindent=0cm, leftmargin=*, widest=iiii}

\DeclareFontFamily{OMS}{rsfs}{\skewchar\font'60}
\DeclareFontShape{OMS}{rsfs}{m}{n}{<-5>rsfs5 <5-7>rsfs7 <7->rsfs10 }{}
\DeclareSymbolFont{rsfs}{OMS}{rsfs}{m}{n}
\DeclareSymbolFontAlphabet{\scr}{rsfs}
\DeclareSymbolFontAlphabet{\scr}{rsfs}

\DeclareMathOperator{\Ad}{Ad}

\DeclareMathOperator{\codim}{codim}

\DeclareMathOperator{\Pic}{Pic}
\DeclareMathOperator{\rank}{rank}

\DeclareMathOperator{\reg}{reg}

\DeclareMathOperator{\sing}{sing}
\DeclareMathOperator{\Spec}{Spec}

\DeclareMathOperator{\gr}{gr}

\newcommand{\sO}{\scr{O}}

\newcommand{\cD}{\mathcal D}

\newcommand{\cO}{\mathcal O}

\newcommand{\cY}{\mathcal Y}

\newcommand{\bC}{\mathbb{C}}

\newcommand{\bO}{\mathbb{O}}
\newcommand{\bP}{\mathbb{P}}
\newcommand{\bQ}{\mathbb{Q}}

\newcommand{\bZ}{\mathbb{Z}}

\newcommand{\bfa}{\mathbf{a}}

\newcommand{\bfl}{\mathbf{l}}

\newcommand{\bfm}{\mathbf{m}}

\newcommand{\bfn}{\mathbf{n}}

\newcommand{\bfO}{\mathbf{O}}

\newcommand{\bfr}{\mathbf{r}}

\newcommand{\bfu}{\mathbf{u}}
\newcommand{\bfv}{\mathbf{v}}
\newcommand{\bfx}{\mathbf{x}}
\newcommand{\bfy}{\mathbf{y}}
\newcommand{\bfZ}{\mathbf{Z}}

\newcommand{\fc}{\mathfrak{c}}

\newcommand{\fg}{\mathfrak{g}}

\newcommand{\fl}{\mathfrak{l}}

\newcommand{\fp}{\mathfrak{p}}

\newcommand{\ft}{\mathfrak{t}}
\newcommand{\fu}{\mathfrak{u}}

\newcommand{\fz}{\mathfrak{z}}

\theoremstyle{plain}
\newtheorem{thm}{Theorem}[section]

\newtheorem{cor}[thm]{Corollary}

\newtheorem{lem}[thm]{Lemma}

\newtheorem{prop}[thm]{Proposition}

\theoremstyle{remark}

\newtheorem{c-n-d}[thm]{Claim and Definition}

\newtheorem{rem}[thm]{Remark}

\newtheorem*{rem-nonumber}{Remark}

\numberwithin{equation}{thm}

\setlist[enumerate]{label=(\thethm.\arabic*),
	before={\setcounter{enumi}{\value{equation}}},
	after={\setcounter{equation}{\value{enumi}}}}

\newcommand{\factor}[2]{\left. \raise 2pt\hbox{$#1$} \right/\hskip -2pt\raise
	-2pt\hbox{$#2$}}

\author{Baohua Fu}
\address{Baohua Fu, State Key Laboratory of Mathematical Sciences, Morningside
	Center of Mathematics, Academy of Mathematics and Systems Science, Chinese
	Academy of Sciences, Beijing 100190, China;   and School of Mathematical
	Sciences, University of Chinese Academy of Sciences, Beijing, China}
\email{\href{bhfu@math.ac.cn}{bhfu@math.ac.cn}}
\urladdr{\href{http://www.math.ac.cn/people/fbh/}{http://www.math.ac.cn/people/fbh/}}

\author{Jie Liu} %
\address{Jie Liu, Institute of Mathematics, Academy of Mathematics and Systems
	Science, Chinese Academy of Sciences, Beijing, 100190, China}
\email{\href{jliu@amss.ac.cn}{jliu@amss.ac.cn}}
\urladdr{\href{http://www.jliumath.com}{http://www.jliumath.com}}

\keywords{nilpotent orbits, Hamiltonian reduction, symplectic singularities, differential operators}

\makeatletter
\@namedef{subjclassname@2020}{2020 Mathematics Subject Classification}
\makeatother
\subjclass[2020]{17B08, 14L30, 53D20, 16S32}

\title[]{A connection between minimal nilpotent orbits of types A and D via Hamiltonian reduction}
\date{\today}

\makeatletter

\hypersetup{
	pdfauthor={\authors},
	pdftitle={\@title},
	pdfsubject={\@subjclass},
	pdfkeywords={\@keywords},
	pdfstartview={Fit},
	pdfpagemode={UseOutlines},
	pdfpagelayout={OneColumn},
	colorlinks,
	linkcolor=linkblue,
	citecolor=linkred,
	urlcolor=linkred
}
\makeatother

\DeclareMathOperator{\aff}{aff}

\DeclareMathOperator{\Proj}{Proj}

\begin{document}
	
	\begin{abstract}
    We establish a novel connection between the minimal nilpotent orbit  \(\mathbb{O}_n\) in \(\mathfrak{sl}_n\) and the minimal nilpotent orbit closure \(\overline{\mathbf{O}}_n\) in \(\mathfrak{so}_{2n+2}\), which differs from the shared‑orbit paradigm of Brylinski and Kostant, where no direct type‑A--type‑D relation appears.  More precisely, we show that the affine closure of the cotangent bundle \(\overline{T^*\mathbb{O}_n}^{\mathrm{aff}}\) is isomorphic to a \(\mathbb{C}^*\)-Hamiltonian reduction of  \(\overline{\mathbf{O}}_n\). This provides a quasi-classical analogue of a quantum result of Levasseur and Stafford. A detailed study of the geometry of this Hamiltonian reduction reveals that \(\overline{T^*\mathbb{O}_n}^{\mathrm{aff}}\) has no symplectic resolution.
	\end{abstract}

	\maketitle
\tableofcontents

\section{Introduction}

Nilpotent orbits constitute a central class of objects in Lie theory. Every nilpotent orbit in a semisimple complex Lie algebra carries the classical Kostant--Kirillov--Souriau symplectic form, which naturally endows it with the structure of a homogeneous symplectic manifold. This symplectic structure not only closely links nilpotent orbits to geometric quantization and the orbit method in representation theory, but also provides a geometric framework for understanding the coadjoint representation of Lie algebras. Moreover, the closures of nilpotent orbits often exhibit rich and interesting singularities.
    
Closely related to the singularities of nilpotent orbit closures is the theory of symplectic singularities, introduced by Beauville \cite{Beauville2000a}. A normal irreducible variety $Y$ is called \emph{symplectic} if there exists a symplectic form $\omega$ on $Y_{\reg}$  such that for any resolution $f\colon W\rightarrow Y$, the pull-back $f^*\omega$  extends to a holomorphic $2$-form on $W$. Furthermore, a resolution $f$ is called \emph{symplectic} if $f^*\omega$ extends to a symplectic form on $W$. Symplectic singularities appear naturally in moduli spaces, representation theory, and mirror symmetry. A fundamental problem in this field is to construct and classify symplectic singularities and to study the existence of symplectic resolutions.

Traditionally, research has focused primarily on nilpotent orbits inside a single fixed Lie algebra. In \cite{Levasseur-Smith}, a surprising connection between nilpotent orbits in different Lie algebras was discovered: the normalization of the $8$-dimensional nilpotent orbit closure in $\fg_2$ is in fact isomorphic to the minimal nilpotent orbit closure in $\mathfrak{so}_7$. In a subsequent celebrated work of Brylinski and Kostant \cite{Brylinski-Kostant}, this connection was investigated in depth, and all pairs of Lie algebras $\fg_0\subset \fg$  with $\fg$ simple for which there exist nilpotent orbits $\cO_0\subset \fg_0$  and $\cO\subset \fg$ admitting a finite $G_0$ -equivariant morphism from $\cO_0$ to $\cO$ were classified. The complete classification of all such {\rm shared nilpotent orbits} $(\cO_0,\cO)$ has recently been obtained in \cite[Proposition~2.12]{FJLS}.

Against this background, one motivation of the present paper is to establish a novel connection between the minimal nilpotent orbits of type A and type D, which is of a different nature from the shared-orbit paradigm of Brylinski and Kostant. Notably, even within the theory of shared nilpotent orbits, no such connection between type A and type D exists.

Another motivation of this paper is to investigate the existence of symplectic resolutions for the affine closure of the cotangent bundle of the minimal nilpotent orbit $\bO_n$ in $\mathfrak{sl}_n$. Recall that the \emph{affine closure} $\overline{Y}^{\aff}$ of an irreducible variety $Y$ is defined as the affine scheme $\Spec \sO(Y)$, where $\sO(Y)$ is the ring of regular functions on $Y$. It is proven in \cite[Theorem 1.2]{FuLiu2025} that the affine closure $\overline{T^*\bO_n}^{\aff}$ is a symplectic variety; it is therefore natural to ask whether $\overline{T^*\bO_n}^{\aff}$ admits a symplectic resolution. One aim of this paper is to answer this question in the negative:

\begin{thm}
\label{t.TOn-symp-res}
    Assume that $n\geq 2$. The affine closure  $\overline{T^*\bO_n}^{\aff}$ is a symplectic variety with terminal singularities, and has no symplectic resolutions.
\end{thm}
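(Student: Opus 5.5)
We plan to realize $\overline{T^*\bO_n}^{\aff}$ as the $\bC^*$-Hamiltonian reduction of $\overline{\bfO}_n\subset\mathfrak{so}_{2n+2}$ announced in the abstract, and to read off all three properties from that model. Write $\mathfrak{so}_{2n+2}\supset \mathfrak{gl}_n\oplus\mathfrak{gl}_1$ via the decomposition $\bC^{2n+2}=V\oplus V^*\oplus\bC^2$ with $V=\bC^n$, and let $\bC^*$ act through the centre of a Levi factor. The moment map $\mu\colon\overline{\bfO}_n\to\bC$ is then the restriction of a linear function on $\mathfrak{so}_{2n+2}$. The first step is to describe $\mu^{-1}(0)$ explicitly. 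On the open part where the $V\otimes V$- and $V^*\otimes V^*$-type components are nonzero, a rank-two isotropic element is determined by a pair $(x,\xi)$ with $x\in\bO_n$ and a cotangent vector $\xi$. Quotienting by $\bC^*$ gives a $GL_n$-equivariant open embedding $T^*\bO_n\hookrightarrow \mu^{-1}(0)/\!\!/\bC^*$.

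The second step is to show that the complement has codimension $\geq 2$ and that $\mu^{-1}(0)/\!\!/\bC^*$ is normal. For normality we would check that $\mu$ is flat with reduced, normal fibre. This uses the fact that $\overline{\bfO}_n$ is Cohen–Macaulay (it is a normal Gorenstein cone) and a Serre-type $R_1$ computation on $\mu^{-1}(0)$, after which we take invariants. Given normality and the codimension bound, regular functions on $T^*\bO_n$ extend, so the reduction equals $\Spec\sO(T^*\bO_n)=\overline{T^*\bO_n}^{\aff}$. That it is symplectic was already known from \cite{FuLiu2025}.

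Terminality follows from Namikawa's criterion: a symplectic variety has terminal singularities iff its singular locus has codimension $\geq4$. We would stratify the reduction by $GL_n$-orbits, using the rank stratification of the pair $(x,\xi)$ and the images of smaller orbits in $\overline{\bfO}_n$. The goal is to check that every singular stratum has codimension $\geq 4$ in the $4n-4$-dimensional space. Small $n$, in particular $n=2$, would be treated by hand.

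For non-existence of a symplectic resolution, the plan is to show that the reduction is $\bQ$-factorial and terminal but singular. A symplectic resolution of a $\bQ$-factorial terminal variety would be small, hence an isomorphism, which is a contradiction. $\bQ$-factoriality should follow from $\Pic$ of the smooth locus of the reduction. That locus is a $\bC^*$-quotient of an open subset of $\bfO_n$, and $\bfO_n$ has $\pi_1=1$ and $\Pic(\bfO_n)=0$ for $D_{n+1}$, so the class group should be controlled by characters of $\bC^*$ acting on the semistable locus. Alternatively, if $\bQ$-factoriality fails, we would compute the local fundamental group or a transversal slice at the most singular stratum and show it is a non-resolvable terminal singularity.

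We expect the main obstacle to be this last step: the class group computation, together with showing that the torsion-free part vanishes rather than contributing a divisor that a $\bQ$-factorial terminalization could flop.
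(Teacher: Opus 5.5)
\paragraph{Main gap: the variety is not $\bQ$-factorial for $n\geq 3$.}
Your handling of the first two assertions is essentially the paper's: the reduction model, then Namikawa's codimension-four criterion, using $\codim(M\setminus M_{(e)})\geq 4$ for $n\geq 3$. The step you rely on for non-existence of symplectic resolutions, however, fails for $n\geq 3$, because $\overline{T^*\bO_n}^{\aff}$ is \emph{not} $\bQ$-factorial.

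Since $T^*\bO_n$ sits inside it with complement of codimension $\geq 2$, pulling back along the projection to the zero section identifies its class group with $\operatorname{Cl}(\overline{\bO}_n)=\Pic(\bO_n)\cong\bZ$. (The incidence variety $\bP(\bO_n)\subset\bP^{n-1}\times\bP^{n-1}$ has Picard number two.) Its Picard group, on the other hand, is trivial.

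In your own heuristic, this $\bZ$ is exactly the character group of $\bC^*$, and it does not die. The VGIT quotients $M^{\pm}=\Proj\bigoplus_{d\geq 0}\sO(N)_{\pm d}$ are two non-isomorphic small $\bQ$-factorial terminalizations, with fibres $\bP(1^2,2^{n-2})$ over points $z\neq o$ of $M_{(\bC^*)}$. So ``$\bQ$-factorial, terminal and singular'' is unavailable exactly where you need it.

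\paragraph{Making the fallback work.}
Your fallback has to be aimed at specific strata; a slice at the vertex gives nothing.
\begin{itemize}
\item For $n\geq 4$: Luna's slice theorem at points of $M_{(\bZ_2)}$ gives the germ $\bC^4/\langle\pm1\rangle\times\bC^{4n-8}$. Since $\bC^4/\langle\pm1\rangle$ is $\bQ$-factorial and terminal, a symplectic resolution of $M$ would induce an impossible one of this germ.
\item For $n=3$: $M_{(\bZ_2)}$ is empty. The transversal slice along $M_{(\bC^*)}\setminus\{o\}$ is the hypertoric fourfold $\cY_3$ (weights $(1,-1,-2)$), which is itself not $\bQ$-factorial. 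So the simple criterion does not apply there either.
\end{itemize}
The paper's argument is uniform. $M^+$ is singular at the images of $N^{\mathrm{ss}}\cap[\fl,\fl]_2$, where the stabilizer is $\bZ_2$ and the local model is $\bC^4/\langle\pm1\rangle\times\bC^{4n-8}$. By Namikawa's result, a conical symplectic variety with a singular $\bQ$-factorial terminalization admits no symplectic resolution.

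\paragraph{The case $n=2$.}
The reduction model is wrong for $n=2$, so treating $n=2$ ``by hand'' inside it would fail. There $N\setminus N^+_\diamond$ has divisorial components. Moreover, $M$ has transversal $A_1$-singularities ($\cY_2$) along the surface $M_{(\bC^*)}$, so $M$ is neither terminal nor isomorphic to $\overline{T^*\bO_2}^{\aff}$. Your codimension-two step therefore holds only for $n\geq 3$.

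For $n=2$, use $\overline{\bO}_2\cong\bC^2/\langle\pm1\rangle$, which gives $\overline{T^*\bO_2}^{\aff}\cong\bC^4/\langle\pm1\rangle$. This is $\bQ$-factorial and terminal, so it is the one case where your $\bQ$-factoriality argument works as stated.

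\paragraph{A smaller point in Step 1.}
If the ``$V\otimes V$- and $V^*\otimes V^*$-type components'' mean the blocks $B\in\wedge^2V$ and $C\in\wedge^2V^*$, that is the wrong locus. For $n\geq 4$ it contains closed orbits with $\bfx=\bfy=0$, which lie over $0\in\overline{\bO}_n$.

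The copy of $T^*\bO_n$ lies over the locus $\bfx\neq 0\neq\bfy$ of the $\fu^+$-components. The identification comes from the symplectic isomorphism $\pi_+^{-1}(Y^+_{\reg})\cong T^*Y^+_{\reg}$, followed by reducing $T^*Y^+_\diamond$ under the free $\bC^*$-action, i.e.\ via the cotangent map of $Y^+_\diamond\to\bO_n$.
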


More precisely, we explicitly describe the analytic germ of $\overline{T^*\bO_n}^{\aff}$ at every point except one. Theorem~\ref{t.TOn-symp-res} then follows directly from this local description.  Furthermore, we prove that $\overline{T^*\bO_n}^{\aff}$ is not $\bQ$-factorial if and only if $n\geq 3$ (see Corollary~\ref{cor.Q-factoriality}), and that it admits exactly two  $\bQ$-factorial terminalizations (see Corollary \ref{cor.Q-fact-ter}), which are described in detail in \S\,\ref{ss.Q-fact-term}.

In general, it is a notoriously difficult problem to explicitly describe the geometry of the affine closure of a quasi-affine variety along its boundary. To overcome this difficulty in Theorem~\ref{t.TOn-symp-res}, we realize $\overline{T^*\bO_n}^{\aff}$ as a Hamiltonian reduction of the minimal nilpotent orbit closure $\overline{\bfO}_n$ in $\mathfrak{so}_{2n+2}$, thereby establishing a connection between the minimal nilpotent orbits of type~A and type~D. More precisely, consider the $\bC^*$-subgroup of $\textup{SO}_{2n+2}$ (written in the matrix form of \eqref{eq.Form-xi}) given by
\[
{\rm diag}(1, \lambda, \cdots, \lambda, 1, \lambda^{-1}, \cdots, \lambda^{-1}).
\]
which acts on $\mathfrak{so}_{2n+2}$ via the adjoint action. This action is Hamiltonian with respect to the Kostant–Kirillov–Souriau form on $\bfO_n$, and therefore one can define the Hamiltonian reduction
\[
\overline{\bfO}_n/\!\!/\!\!/\bC^* \coloneqq \mu^{-1}(0)/\!/\bC^*,
\]
where $\mu\colon \overline{\bfO}_n \to \fc^* \cong \bC$ is the associated moment map (see \cite{FuLiu2026}). We prove:

\begin{thm}
\label{thm.main-thm}
    Assume that $n\geq 3$. Then $\overline{T^*\bO_n}^{\aff}$ is isomorphic to the Hamiltonian reduction $\overline{\bfO}_n/\!\!/\!\!/\bC^*$.
\end{thm}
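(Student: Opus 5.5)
We prove the isomorphism by writing down a $\mathrm{SL}_n$-equivariant map between the two sides and checking it on coordinate rings. The Levi decomposition does the bookkeeping. The centralizer of the $\bC^*$ in $\mathfrak{so}_{2n+2}$ is $\fl=\mathfrak{gl}_n\oplus \bC$. Its weight spaces are
\[
\mathfrak{so}_{2n+2}=\fg_{-2}\oplus\fg_{-1}\oplus\fg_0\oplus\fg_1\oplus\fg_2,
\]
with $\fg_0=\fl$, $\fg_{\pm1}\cong V^{\pm}\otimes \bC^2$ where $V=\bC^n$, and $\fg_{\pm 2}\cong\wedge^2 V^{\pm}$. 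The relevant coordinates are the following. The elements of $\overline{\bfO}_n$ are the rank $\le 2$ matrices $x=u\wedge v$ with $u,v$ spanning an isotropic plane in $\bC^{2n+2}=\bC e_0\oplus V\oplus \bC f_0\oplus V^*$. The moment map $\mu$ is then the pairing of $x$ with the generator $h$ of $\fc$. Writing $u=(a,p,b,\xi)$ and $v=(a',p',b',\xi')$, one finds $\mu(x)=\langle p\wedge p',\ldots\rangle$, essentially $\langle \xi,p'\rangle-\langle\xi',p\rangle$ up to normalization.

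The plan has four steps.

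\emph{Step 1.} Identify the $\bC^*$-invariants of $\bC[\overline{\bfO}_n]$. Since $\bC[\overline{\bfO}_n]=\bigoplus_k V(k\theta)^*$ is multiplicity free, the invariant part is $\bigoplus_k V(k\theta)^{*,\bC^*}$, that is, the weight-zero spaces for $h$. By Kostant's presentation, $\bC[\overline{\bfO}_n]$ is $\mathrm{Sym}(\mathfrak{so}_{2n+2})$ modulo the quadratic Joseph ideal. Hence the degree-zero part is generated by the $\fg_0$ coordinates, which are linear, together with the products $\fg_{1}\cdot\fg_{-1}$ and $\fg_2\cdot\fg_{-2}$ of weight zero. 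In terms of $\mathfrak{sl}_n$-modules these generators are $\mathfrak{gl}_n$ and $\bC$ in degree one, and $V\otimes V^*$-type pieces in degree two.

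\emph{Step 2.} Build the comparison map. By the paper's earlier description from \cite{FuLiu2025}, $T^*\bO_n$ embeds into $\mathfrak{sl}_n\times\mathfrak{sl}_n$, or equivalently sits inside the set of pairs $(X,Y)$ with $X$ of rank one and $[X,Y]$ satisfying suitable constraints. Moreover, $\bC[T^*\bO_n]$ is generated by explicit functions: the entries of $X$, the entries of $Y$ modulo the fibre directions, and certain degree-two functions coming from the boundary. Define $\Phi\colon \mu^{-1}(0)\to \overline{T^*\bO_n}^{\aff}$ by sending $x\in\mu^{-1}(0)$ to its $\fg_0$-component together with the appropriate weight-zero quadratic components. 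These are $\bC^*$-invariant, so $\Phi$ descends to $\mu^{-1}(0)/\!/\bC^*$. On the dense open set where the $\fg_{\pm1}$ components are nonzero and the stabilizer is finite, check directly that $\Phi$ is a bijection onto $T^*\bO_n$. This is a dimension match, $\dim\overline{\bfO}_n-2=4n-2=2\dim\bO_n$, plus a linear-algebra computation that a reduced isotropic pair determines an element of $\bO_n$ and a covector.

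\emph{Step 3.} Conclude that $\Phi$ is an isomorphism. We use normality of $\overline{\bfO}_n/\!\!/\!\!/\bC^*$ from \cite{FuLiu2026}, valid for $n\ge3$, where $\mu^{-1}(0)$ is reduced, irreducible and normal of codimension one. We also use that the complement of $T^*\bO_n$ in the reduction has codimension $\ge2$. Then Hartogs gives
\[
\bC[\overline{\bfO}_n/\!\!/\!\!/\bC^*]=\bC[T^*\bO_n]=\sO(T^*\bO_n),
\]
which is exactly the coordinate ring of $\overline{T^*\bO_n}^{\aff}$.

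The main obstacle is the codimension-two statement together with the normality and irreducibility of $\mu^{-1}(0)$. For this we stratify $\mu^{-1}(0)$ by the ranks of the $\fg_{\pm1},\fg_{\pm2}$ components and bound each stratum's dimension. The assumption $n\ge3$ enters precisely here, since for $n=2$ an extra component of the boundary or of $\mu^{-1}(0)$ appears.
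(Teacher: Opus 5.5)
Your overall strategy matches the paper's endgame: realize $T^*\bO_n$ as an open subset of the normal variety $M=N/\!/\bC^*$ whose complement has codimension at least two, then apply Hartogs. But the proposal never constructs that open subset, and the locus you point to is the wrong one.

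\textbf{The missing chart.} The chart comes from the projection $\pi_+$ to $\fu^+$, not from the $\bC^*$-weight decomposition. By Theorem~\ref{thm.Quadric} and Proposition~\ref{p.symiso}, $\pi_+^{-1}(Y^+_{\reg})\cong T^*Y^+_{\reg}$ symplectically. Here $Y^+\cong Y$ is the quadric cone in the coordinates $(\bfx,\bfy)$, and on this open set the $\bC^*$-action is the cotangent lift of $(\bfx,\bfy)\mapsto(\lambda\bfx,\lambda^{-1}\bfy)$, whose quotient is $q_Y\colon Y\to\overline{\bO}_n$. Over $Y^+_\diamond=Y^+\cap\{\bfx\neq0\neq\bfy\}$ this quotient is a principal $\bC^*$-bundle onto $\bO_n$. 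Cotangent reduction then gives $N^+_\diamond:=N\cap\pi_+^{-1}(Y^+_\diamond)\cong q^*(T^*\bO_n)$, hence $\pi_N(N^+_\diamond)\cong T^*\bO_n$. Moreover $N^+_\diamond$ is saturated, and that is what lets codimension two in $N$ pass to codimension two in $M$.

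\textbf{Why your locus fails.} Your locus ``where the $\fg_{\pm1}$ components are nonzero'' is a different set. $\fg_1$ contains both $\bfx$ and $\bfv$, and $\fg_{-1}$ contains both $\bfy$ and $\bfu$. By the computation in the proof of Proposition~\ref{prop.Orbits-N} (if $\bfx\neq0=\bfy$ then $\bfu=0$, and symmetrically), this locus in $N$ is exactly $N_{(e)}$, with image $M_{(e)}=M_{\reg}$. It contains the free closed orbits with $\bfx=\bfy=0$ and $\bfu\neq0\neq\bfv$ (nonempty by Lemma~\ref{lem.Fibers-center-fiber}). These lie outside $\pi_N(N^+_\diamond)$; they belong to the second chart coming from $\pi_-$. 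So $M_{(e)}$ strictly contains a copy of $T^*\bO_n$, and no bijective morphism from it onto $T^*\bO_n$ exists. By Zariski's main theorem such a morphism would be an isomorphism, and that isomorphism would extend, by normality and codimension two, to an automorphism of $M$ mapping $M_{\reg}$ onto a proper open subset of itself.

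\textbf{$\Phi$ is not defined.} A morphism into $\Spec\sO(T^*\bO_n)$ cannot be specified by listing ``components'' without a presentation of $\sO(T^*\bO_n)$. The generators you invoke, including ``degree-two functions coming from the boundary'', are unjustified, and knowing them is essentially the theorem. What your Step~3 actually uses is an explicit open immersion $T^*\bO_n\hookrightarrow M$, and that is exactly the cotangent-reduction step above.

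\textbf{The codimension estimate.} You rightly call this the heart of the matter, but it is not carried out, and it cannot be until the chart is fixed. In the paper it reads $\dim(N\setminus N^+_\diamond)=\max\{4n-5,\,3n-2\}=4n-5$. The term $4n-5$ is $\dim G_+$ for $G_+=N\cap\pi_+^{-1}(0)$, computed by fibering $G_+$ over $Y^-$ via $\pi_-$ (Lemma~\ref{lem.Fibers-center-fiber} and Corollary~\ref{cor.dim-G+}). This is exactly where $n\geq3$ enters: for $n=2$ one gets $\dim G_+=4=\dim N-1$. Note that $N$ is irreducible and normal already for $n=2$, so what fails there is a divisorial boundary, not an extra component of $\mu^{-1}(0)$.

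\textbf{Smaller points.} Step~1 plays no role and can be dropped. Its generation claim would in any case need the quadratic relations to discard the a priori necessary cubic weight-zero monomials of types $\fg_1\fg_1\fg_{-2}$ and $\fg_{-1}\fg_{-1}\fg_2$. Also, $\dim\overline{\bfO}_n-2=4n-4$, not $4n-2$.
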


This result can be regarded as the quasi-classical analogue of \cite[Theorem 0.1]{LevasseurStafford1999}. We analyze in detail the local geometry of the quotient $\overline{\bfO}_n/\!\!/\!\!/\bC^*$ away from its vertex (with respect to the descent of the natural scaling on $\overline{\bfO}_n$), from which we derive Theorem \ref{t.TOn-symp-res}. Moreover, this result allows us to use the variation of GIT to construct the $\bQ$-factorial terminalizations of $\overline{T^*\bO_n}^{\aff}$. We refer the reader to \S\,\ref{s.HamRed} and Theorem \ref{thm.M-Geometry} for more details. During the preparation of this paper, B.~Jia informed us that he has independently obtained Theorem \ref{thm.main-thm} (via a different construction).

\begin{rem}
Let $\fg$ be a simple Lie algebra and let $\cO\subset \fg$ be the minimal nilpotent orbit. By \cite[Theorem~1.2]{FuLiu2025}, the affine closure $\overline{T^*\cO}^{\aff}$ is a symplectic variety. If $\fg$ is of type C, i.e., $\fg=\mathfrak{sp}_{2n}$, then $\overline{\cO}^{\aff}$ is isomorphic to $\bC^{2n}/\langle\pm 1\rangle$, so the symplectic variety $\overline{T^*\cO}^{\aff}$  is in fact isomorphic to $\bC^{4n}/\langle \pm 1\rangle$ (see \cite[Example~2.8]{FuLiu2025}). It remains a very interesting problem to describe the singularities and to study the existence of symplectic resolutions of $\overline{T^*\cO}^{\aff}$ in the remaining types. We expect that for all other types, the affine closure $\overline{T^*\cO}^{\aff}$ is terminal and $\bQ$-factorial, and admits no symplectic resolutions.
\end{rem}

 \subsection*{Acknowledgments}
 We would like to thank Gwyn Bellamy for his helpful remarks on the variation of GIT. Both authors are supported by the National Key Research and Development Program of China (No. 2025YFA1017302),  the CAS Project for Young Scientists in Basic Research (No. YSBR-033) and the NSFC grant (No. 12288201). This work was supported by the Strategic Priority Research Program of Chinese Academy of Sciences under Grant XDA0480503. 

\section{Symplectic structure on some nilpotent orbits}

In this section, we recall a result from \cite[\S\,4]{FuLiu2025}, where it was shown that some nilpotent orbit closures can be realized as the affine closure of the cotangent bundle of certain smooth quasi-affine varieties. The aim of this section is to sharpen this result by showing that these isomorphisms are symplectic with respect to the natural symplectic forms on both sides.

\subsection{Nilpotent orbits as affine closures}

Let $G$ be a connected simple group, and let $P \subset G$ be a maximal parabolic subgroup containing a Borel subgroup $B$. Let $\mathfrak{h} \subset \mathfrak{b}$ be the Cartan subalgebra. Write $\mathfrak{p} = \mathfrak{l} \oplus \mathfrak{u}^+$, where $\mathfrak{u}^+$ is the nilpotent radical of $\mathfrak{p}$ and $\mathfrak{l}$ is the Levi part. Thus, $\mathfrak{g} = \mathfrak{u}^+ \oplus \mathfrak{l} \oplus \mathfrak{u}^-$. Write $g=g^+ + g^l + g^-$ the corresponding decomposition of $g\in \fg$. We will consider certain  specified $\fp$ with abelian $\mathfrak{u}^+$, as summarized in the following table (see also \cite[\S\,4]{FuLiu2025}). 
		\begin{longtable}{llp{3.1cm}l}
			\caption{}\label{table.Abelian}\\
			\hline
			\text{Type of $\mathfrak{g}$} 
			& \text{Dynkin diagram} 
			& parabolics $\mathfrak{p}$ with abelian radical
			& nilpotent orbit $\mathcal{O}$ \\
			\hline
			\endfirsthead
			\multicolumn{4}{l}{{ {\bf \tablename\ \thetable{}} \textrm{-- continued from previous page}}}
			\\
			\hline 
			\text{Type of $\mathfrak{g}$} 
			& \text{Dynkin diagram} 
			& parabolics $\mathfrak{p}$ with abelian radical
			& nilpotent orbit $\mathcal{O}$
			\endhead
			\hline
			\hline \multicolumn{4}{r}{{\textrm{Continued on next page}}} \\ \hline
			\endfoot
			
			\hline \hline
			\endlastfoot
			$A_n$ $(n \geq 1)$ 
			& $\dynkin[labels={1,2, ,n}] A{*2.*2}$
			& $\mathfrak{p}_i$ $\left(2 \leq i \leq \frac{n+1}{2}\right)$ 
			& $\mathcal{O}_r$ $(1 \leq r \leq i-1)$ \\
			$B_n$ $(n \geq 3)$ 
			& $\dynkin[labels={1,2,,n}] B{*2.*2}$
			& $\mathfrak{p}_1$ 
			& $\mathcal{O}_{\min}$ \\
			$C_n$ $(n \geq 2)$
			& $\dynkin[labels={1,2,,n}] C{*2.*2}$
			& $\mathfrak{p}_n$
			& $\mathcal{O}_r$ $(1 \leq r \leq n - 1)$ \\
			$D_n$ $(n \geq 4)$
			& $\dynkin[labels={1,2,,n-1,n}] D{*2.*3}$
			& $\mathfrak{p}_1$, $\mathfrak{p}_{n-1}$, $\mathfrak{p}_n$
			& $\mathcal{O}_{\min}$ \\
			$D_n$ $(n \geq 4)$
			& $\dynkin[labels={1,2,,n-1,n}] D{*2.*3}$
			& $\mathfrak{p}_n$
			& $\mathcal{O}_{r}$ $\left(2 \leq r \leq \frac{n - 2}{2}\right)$ \\
			
			$E_6$ 
			& $\dynkin[labels={1,...,6}] E6$
			& $\mathfrak{p}_1$, $\mathfrak{p}_6$
			& $\mathcal{O}_{\min}$   \\
			$E_7$
			& $\dynkin[labels={1,...,7}] E7$
			& $\mathfrak{p}_7$
			& $\mathcal{O}_{\min}$ \\
			\hline
		\end{longtable}
The extra notation in Table \ref{table.Abelian} is as follows. In each case, $\mathfrak{p}_i$ is the maximal parabolic subalgebra of $\mathfrak{g}$ associated to the simple root $\alpha_i$ (in the order of Bourbaki), and $\mathcal{O}_{\min}$ denotes the minimal nilpotent orbit in $\mathfrak{g}$. In Cases $A_n$, $C_n$, and $D_n$, the variety $\mathcal{O}_r$ is a nilpotent orbit in $\mathfrak{g}$ defined as follows (see \cite[II, \S\,5 and \S\,6]{LevasseurStafford1989}):
	\[
	\mathcal{O}_r \coloneqq \{g \in \mathfrak{g} \mid \rank(g) = \widetilde{r} \text{ and } g^2 = 0\},
	\]
	where $\widetilde{r} = r$ in Cases $A_n$ and $C_n$, but $\widetilde{r} = 2r$ in Case $D_n$. Once $\fp$ and $\cO$ are given, we define $X^+$ (resp. $X^-$) to be $\cO\cap \fu^+$ (resp. $\cO\cap \fu^-$). Let $\kappa$ be the Killing form on $\fg$, then we have
$\kappa(\fu^+, \fu^+ \oplus \fl)=0$ and $\kappa(\fu^-, \fu^- \oplus \fl)=0$ in this case, which  in particular implies that
$\kappa(\fl, \fu^+ \oplus \fu^-)=0$ and $\kappa$ induces an isomorphism $\fu^+\cong \fu^-$ sending $X^+$ to $X^-$ isomorphically. Moreover, we have 
\[
\overline{\cO}_r^{\aff}\cong \overline{\cO}_r,\quad \overline{X^+}^{\aff}\cong \overline{X^+}\quad \text{and}\quad \overline{X^-}^{\aff}\cong \overline{X^-}
\]
where the right hand side in the isomorphisms above means taking closure in $\fg$. With these notations, we recall:
       
    \begin{thm}[\protect{\cite[Theorem 4.2]{FuLiu2025}}]
		\label{thm.Quadric}
        There exist two fibrations on $\overline{\cO}_r$
        \begin{equation}
            \label{eq.Fib-O}
            \begin{tikzcd}[row sep=large, column sep=large]
            \overline{\cO}_r \arrow[r,"{\pi_+}"] \arrow[d,"{\pi_-}" left]
                &   \overline{X^+} \subset \fu^+ \\
            \overline{X^-} \subset\fu^-
        \end{tikzcd}
        \end{equation}
       such that
        \begin{enumerate}
            \item $\pi_+^{-1}(X^+)$ (resp. $\pi_-^{-1}(X^-)$) is isomorphic to $T^*X^+$ (resp. $T^*X^-$), and 

            \item the affine closures $\overline{T^*X^+}^{\aff}$ and $\overline{T^*X^-}^{\aff}$ are isomorphic to $\overline{\cO}_r$,
        \end{enumerate}
        where $\pi_+$ and $\pi_-$ are induced by the decomposition $\fg=\fu^+\oplus \fl\oplus \fu^-$.
	\end{thm}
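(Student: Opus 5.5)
The plan is to build the map $\pi_+$ directly from the decomposition $\fg=\fu^+\oplus\fl\oplus\fu^-$, identify $\pi_+^{-1}(X^+)$ with $T^*X^+$, and then get the affine-closure statement from a codimension count.

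\textbf{Defining the fibrations.} We take $\pi_+(g)=g^+$ and $\pi_-(g)=g^-$. To see that $\pi_+$ maps $\overline{\cO}_r$ into $\overline{X^+}$, we use a grading element: let $h\in\fz(\fl)$ act by $1,0,-1$ on $\fu^+,\fl,\fu^-$, and let $\lambda(t)=\exp(th)$ be the corresponding one-parameter subgroup. Then
\[
t^{-1}\Ad_{\lambda(t)}g=g^+ + t^{-1}g^l+t^{-2}g^- ,
\]
and this tends to $g^+$ as $t\to\infty$. Since $\overline{\cO}_r$ is a closed $G$-stable cone, the limit $g^+$ lies in $\overline{\cO}_r\cap\fu^+=\overline{X^+}$. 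The map $\pi_-$ is handled symmetrically.

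\textbf{Identifying the open piece with $T^*X^+$.} Because $\kappa$ pairs $\fu^+$ perfectly with $\fu^-$, we have $T^*\fu^+\cong\fu^+\times\fu^-$. The cotangent bundle $T^*X^+$ is then the quotient of $\fu^+\times\fu^-$ restricted to $X^+$, modulo the conormal directions. We will compare it with $\pi_+^{-1}(X^+)$ by a $P$-equivariant map, constructed on a slice and spread out by $P$.
\begin{itemize}
\item[(a)] Fix a representative $x=e\in X^+$, and choose an $\mathfrak{sl}_2$-triple compatible with the grading.
\item[(b)] Describe the fibre $\pi_+^{-1}(e)=\{e+l+v\in\overline{\cO}_r\}$ explicitly, using the matrix descriptions of the classical cases ($\rank=\widetilde r$, $g^2=0$) and the minimal-orbit cases.
\item[(c)] Show this fibre is an affine space of dimension $\dim X^+$ which is a torsor under $T^*_eX^+=\fu^-/(T_eX^+)^\perp$.
\end{itemize}
In the minimal-orbit cases, $\overline{\cO}_{\min}\cap\fu^+$ is the cone over a Hermitian symmetric embedding. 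Here $\pi_+^{-1}(X^+)$ should be the orbit $U^-\cdot(L\cdot e)$ twisted appropriately, i.e. $\pi_+^{-1}(X^+)\cong P^-\times^{\text{stab}}\dots$. The affine bundle structure comes from $\exp(\fu^-)$ acting, using that $\fu^-$ is abelian.

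\textbf{Showing the affine bundle is linear.} The affine bundle is a priori only an affine bundle; we must show it is isomorphic to the vector bundle $T^*X^+$. We make it linear by using the section $g\mapsto g$ with $g^-=0$, i.e. $X^+\subset\overline{\cO}_r$ itself, so that the zero section is $X^+$. Equivariance under $L$ and the $\bC^*$-scaling then gives a vector-bundle isomorphism. \emph{This step is the main obstacle}, and the cleanest route is probably a dimension count plus an equivariant argument: $\dim\cO_r=2\dim X^+$ holds in the listed cases, which is why abelian radicals are needed.

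\textbf{Affine closure.} Once (1) holds, the complement $\overline{\cO}_r\setminus\pi_+^{-1}(X^+)=\pi_+^{-1}(\partial X^+)$ must have codimension $\ge2$, which we check by bounding the dimensions of fibres over lower strata. Since $\overline{\cO}_r$ is normal in these cases (classical results), Hartogs gives
\[
\sO(T^*X^+)=\sO(\overline{\cO}_r),
\]
and hence part (2).
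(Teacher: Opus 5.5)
Your architecture matches the construction the paper recalls after Proposition~\ref{p.symiso}: project along $\fg=\fu^+\oplus\fl\oplus\fu^-$, sweep out the fibres of $\pi_+$ by $\exp(\fu^-)$, then use normality of $\overline{\cO}_r$ and a codimension-two complement. However, both substantive steps are only asserted, and the step you call the main obstacle is not where the difficulty lies. An affine bundle with a section is automatically its underlying vector bundle, so linearity is free once (b)--(c) hold. What is missing is (b)--(c) themselves:
(i) every $g\in\overline{\cO}_r$ with $g^+=x\in X^+$ lies in $\exp(\fu^-)\cdot x$;
(ii) the stabiliser of $x$ in $\fu^-$ is exactly $(T_xX^+)^\perp$.
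Your plan to get (i) from matrix descriptions covers only the classical cases, and the minimal-orbit paragraph ends in a placeholder. The key formula, valid since $\mathrm{ad}_v^3=0$, is
\[
\Ad(\exp v)x=x+[v,x]+\tfrac12[v,[v,x]],\qquad v\in\fu^-.
\]
It shows that $g\mapsto(g^+,g^l)$ identifies $\pi_+^{-1}(X^+)$ with the vector bundle $E=\{(x,[\fu^-,x])\}\subset X^+\times\fl$, because the $\fu^-$-component $\tfrac12[v,g^l]$ is determined by $g^l$. Moreover $[\fu^-,x]\cong T^*_xX^+$ via $[v,x]\mapsto\bigl([l,x]\mapsto\kappa(x,[l,v])\bigr)$. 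Step (ii) is just $\kappa([l,x],v)=\kappa(l,[x,v])$, with $[x,v]\in\fl$ and $\kappa|_{\fl}$ nondegenerate.

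For (i) there is a uniform argument built from pieces you already have. Your limit argument gives $\pi_+^{-1}(X^+)\cap\overline{\cO}_r\subset\cO_r$. Since $T_x\cO_r=[\fl,x]\oplus[\fu^-,x]$ with $[\fl,x]=T_xX^+\subset\fu^+$ and $[\fu^-,x]\subset\fl$, the restriction $\pi_+|_{\cO_r}$ is smooth at $x$ with fibre dimension $\dim[\fu^-,x]$. The fibre is preserved by $g\mapsto g^++sg^l+s^2g^-$, which contracts it to $x$ as $s\to0$. So each component contains the smooth point $x$, the fibre is irreducible, and it equals the closed orbit $\exp(\fu^-)\cdot x$ of the same dimension. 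Two slips: the relevant group is $P^-$, not $P$, since $P$ does not even preserve $\pi_+^{-1}(X^+)$ (otherwise $G=\langle P,P^-\rangle$ would, but $X^-\subset\cO_r$ lies outside it). Also, the zero section is $\{g=g^+\}$, not $\{g^-=0\}$.

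For (2), the codimension-two bound does not follow from (1); it is exactly where the restrictions in Table~\ref{table.Abelian} enter. Saying the complement ``must'' have codimension $\geq2$ and deferring to an unexecuted fibre count therefore leaves (2) unproved. Take $\mathfrak{sl}_{n+1}$ ($n\geq2$) with $\fp_1$ and $\cO_{\min}$, which has abelian radical but is excluded from the table. Everything above goes through with $X^+=\fu^+\setminus\{0\}$. Yet $\pi_+^{-1}(0)\cap\overline{\cO}_{\min}$ is the divisor of matrices in $\overline{\cO}_{\min}$ with zero first row, and indeed $\overline{T^*X^+}^{\aff}\cong\bC^{2n}\not\cong\overline{\cO}_{\min}$. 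You need a genuine case-by-case estimate of $\dim\pi_+^{-1}(\overline{X^+}\setminus X^+)$, in addition to normality of $\overline{\cO}_r$.
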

In the sequel of this section, we will denote $\cO_r$ by $\cO$ for simplicity. Recall that $\cO$ carries a natural (algebraic) symplectic form $\omega^{KKS}$, called the \emph{Kostant--Kirilov--Souriau form}. More precisely, for any $o\in \cO$, the tangent space $T_o \cO$ is isomorphic to $[\fg,o]$ so that $\omega^{KKS}$ is defined as follows:
\[
\omega_o^{KKS}([g,o],[g',o])\coloneqq \kappa(o,[g,g'])\quad g,\;g'\in \fg.
\]
On the other hand, for any smooth variety $Z$, its cotangent bundle $T^*Z$ also carries a natural symplectic form $d\eta$, where $\eta$ is the Liouville $1$-form defined as follows:
\[
\eta_{(z,w)} (\zeta)\coloneqq \langle w,d\pi(\zeta)\rangle\quad z\in Z,\; w\in T^*_z Z,\; \zeta\in T_{(z,w)}(T^*Z),
\]
where $\pi\colon T^*Z\rightarrow Z$ is the natural projection. From the proof of \cite[Theorem 4.2]{FuLiu2025}, it is known that under the isomorphisms in Theorem \ref{thm.Quadric}, the symplectic form on $\cO$ coincides with that on the cotangent bundle along its zero section, so it is natural to compare these two symplectic structures globally. We prove:

\begin{prop} \label{p.symiso}
Let $X$  be either $X^+$  or $X^-$ , and accordingly let $U$  be $\pi_+^{-1}(X^+)$ or $\pi_-^{-1}(X^-)$. Then the isomorphism $U\cong T^*X$  in Theorem~\ref{thm.Quadric} can be chosen to be symplectic.
\end{prop}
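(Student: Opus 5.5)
The plan is to make the isomorphism of Theorem~\ref{thm.Quadric} explicit, pull back the Liouville form, and compare its differential with $\omega^{KKS}$ by evaluating both on fundamental vector fields of the $G$-action. I treat $X=X^+$; the case $X^-$ is symmetric after exchanging $\fu^+$ and $\fu^-$. For $x\in X^+$ the tangent space $T_xX^+$ is a subspace of $\fu^+$. Since $\kappa$ pairs $\fu^+$ and $\fu^-$ perfectly, every covector on $T_xX^+$ is the restriction of $\kappa(y,\cdot)$ for some $y\in\fu^-$. This suggests the map
\[
\varphi\colon U\longrightarrow T^*X^+,\qquad o\longmapsto \bigl(o^+,\ \kappa(o^-,\cdot)|_{T_{o^+}X^+}\bigr),
\]
or $\kappa(o,\cdot)|_{T_{o^+}X^+}$, which is the same thing because $\kappa(\fu^+,\fu^+\oplus\fl)=0$. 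I expect, from the proof of \cite[Theorem 4.2]{FuLiu2025}, that the isomorphism $U\cong T^*X^+$ is exactly this map, possibly up to a fibrewise scalar. In that case bijectivity and regularity come for free from Theorem~\ref{thm.Quadric}. If the isomorphism there is built differently, I will check that it differs from $\varphi$ by an $L$-equivariant fibrewise linear automorphism, and replace it by $\varphi$.

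Next, compute $\alpha\coloneqq\varphi^*\eta$ on $U$. Tangent vectors at $o$ have the form $[g,o]$ with $g\in\fg$, and $d\pi_+$ is the projection to $\fu^+$. Hence
\[
\alpha_o([g,o])=\kappa\bigl(o^-,[g,o]^+\bigr)=\kappa\bigl(o^-,[g,o]\bigr)=\kappa\bigl([o,o^-],g\bigr).
\]
The goal is then to prove $d\alpha=c\,\omega^{KKS}$ for a nonzero constant $c$. Afterwards, composing $\varphi$ with fibrewise multiplication by $c^{-1}$ on $T^*X^+$ rescales $d\eta$ to $d\eta/c$. This yields a symplectic isomorphism, proving the proposition.

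To compute $d\alpha$, use the fundamental vector fields $\xi_g(o)=[g,o]$. They satisfy $[\xi_g,\xi_h]=-\xi_{[g,h]}$, up to the sign convention. By Cartan's formula,
\[
d\alpha(\xi_g,\xi_h)=\xi_g\bigl(\alpha(\xi_h)\bigr)-\xi_h\bigl(\alpha(\xi_g)\bigr)+\alpha(\xi_{[g,h]}).
\]
The function $o\mapsto\alpha_o(\xi_h)=\kappa(o^-,[h,o])$ is quadratic in $o$, so its derivative along $\xi_g$ is $\kappa([g,o]^-,[h,o])+\kappa(o^-,[h,[g,o]])$. Everything reduces to Lie algebra identities evaluated at points $o\in\cO$. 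Both sides are bilinear in $(g,h)$, so it suffices to take $g,h$ in the graded pieces $\fu^+,\fl,\fu^-$. For this I will use the grading $[\fu^\pm,\fu^\pm]=0$ (abelian radical), $[\fl,\fu^\pm]\subset\fu^\pm$, $[\fu^+,\fu^-]\subset\fl$, together with invariance of $\kappa$ and the vanishing $\kappa(\fu^\pm,\fu^\pm\oplus\fl)=0$.

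The main obstacle is this verification, which must produce exactly $c\,\kappa(o,[g,h])$. The leftover terms need the defining equations of $\cO$, namely $g^2=0$ and the rank condition, or equivalently the quadratic relations $[o,[o,\cdot]]$ satisfied on the minimal-type orbits. I would first try the case $g,h\in\fu^-$, where $\xi_g,\xi_h$ are vertical and the claim is that the fibres are Lagrangian for both forms. Then I would treat $g\in\fl$, where $L$-equivariance of $\varphi$ should make the identity formal. The remaining cases $(\fu^+,\fu^-)$ and $(\fu^+,\fu^+)$ are where the orbit relations enter. If this case-by-case computation becomes unwieldy, there is a fallback. Both $d\alpha$ and $\omega^{KKS}$ are closed and $L\ltimes\exp(\fu^-)$-invariant; the $\exp(\fu^-)$-invariance of $d\alpha$ comes from the fact that fibre translations by exact forms are symplectic. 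By \cite{FuLiu2025} the two forms agree along the zero section. One then argues that their difference is an invariant closed $2$-form vanishing on the zero section and on the vertical directions, hence zero.
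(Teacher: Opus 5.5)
The proposal has a genuine gap: your map $\varphi$ is not an isomorphism, so the argument fails at its first step. Write $o=\Ad(\exp(v))x=x+[v,x]+\tfrac12[v,[v,x]]$ with $x\in X^+$ and $v\in\fu^-$. Then $o^+=x$, $o^l=[v,x]$ and $o^-=\tfrac12[v,[v,x]]$.

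Since $o^-$ is even in $v$, you get $\varphi(\Ad(\exp(v))x)=\varphi(\Ad(\exp(-v))x)$. These two points of $U$ differ as soon as $[v,x]\neq 0$. So $\varphi$ is fibrewise quadratic, generically $2{:}1$, and branched along the zero section. For example, take $\mathfrak{sl}_2$ with $\fp=\mathfrak{b}$ and the points $o=\begin{pmatrix} a&b\\ -a^2/b&-a\end{pmatrix}$ with $b\neq 0$. Up to constants, your $\varphi$ is $(b,a)\mapsto(b,-a^2/b)$, whereas the isomorphism of Theorem~\ref{thm.Quadric} is $(b,a)\mapsto(b,a/b)$.

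Two consequences follow at points $x$ of the zero section:
\begin{itemize}
\item $d\varphi$ kills the vertical directions $[\fu^-,x]$.
\item $d\varphi$ maps $T_xU$ into the tangent space of the zero section, which is Lagrangian.
\end{itemize}
Hence $d\alpha=\varphi^*d\eta$ vanishes identically along the zero section, while $\omega^{KKS}$ is nondegenerate there. So $d\alpha=c\,\omega^{KKS}$ is false for every constant $c$, and your case-by-case Cartan computation cannot close. No fibrewise rescaling can repair this. Nor can comparing $\varphi$ with the isomorphism of Theorem~\ref{thm.Quadric} by a fibrewise linear automorphism, since $\varphi$ is not of that form.

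Your fallback fails for the same reason. $\varphi$ does not turn the $\exp(\fu^-)$-action into translations by exact $1$-forms. In the $\mathfrak{sl}_2$ example, $\exp(sf)$ sends $a\mapsto a-sb$, so the fibre coordinate $-a^2/b$ shifts by the $a$-dependent amount $2as-s^2b$. Thus $d\alpha$ is neither $\exp(\fu^-)$-invariant nor equal to $\omega^{KKS}$ on the zero section.

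The missing idea is which component carries the fibre coordinate. It must be the Levi component $o^l=[v,x]$, which is linear in $v$, paired with $T_xX^+=[\fl,x]$ through $\omega^{KKS}_x$. It is not $o^-$ paired through $\kappa\colon\fu^-\times\fu^+\to\bC$. Concretely, the covector attached to $o$ is
\[
[l,x]\mapsto\omega^{KKS}_x([l,x],[v,x])=\kappa(x,[l,v])=\kappa(o^l,l),
\]
equivalently $-\kappa(v,\cdot)|_{T_xX^+}$. This is the isomorphism the paper uses. The paper then introduces $f(o)=\kappa(x,v)$ and checks $\eta+df=\beta$, where $\beta_o([g,o])=\kappa(g,o)$ is a primitive of $\omega^{KKS}$.

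With this corrected map, your fallback becomes a valid, computation-free alternative to that explicit primitive:
\begin{itemize}
\item $\Ad(\exp(v'))$ sends $(x,[v,x])$ to $(x,[v+v',x])$. This is translation by the exact form $d\bigl(-\kappa(v',\cdot)|_{X^+}\bigr)$, so the pulled-back $d\eta$ is $\exp(\fu^-)$-invariant, like $\omega^{KKS}$.
\item The two forms agree at points of the zero section, up to the usual sign convention. There $[\fl,x]$ and $[\fu^-,x]$ are both $\omega^{KKS}_x$-isotropic, and the mixed pairing is by construction the one defining the covector.
\item $\exp(\fu^-)$ acts transitively on each fibre of $U\to X^+$, so the two forms agree everywhere.
\end{itemize}
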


By the symmetry between $\fu^+$ and $\fu^-$, we may assume that $X=X^+$. Firstly we describe in details the isomorphism $T^*X\cong U$. Let $E$ be the vector subbundle of $X\times \fl$ defined as follows:
\[
E\coloneqq \left\{(x,[\fu^-,x])\mid x\in X\right\} \subset X\times \fl.
\]
Consider the following surjective map
\[
X\times \fu^- \longrightarrow U\subset \cO\subset  \fg,\quad (x,v)\longmapsto o,
\]
where
\[
o \coloneqq  \Ad(\exp(v))x = x + [v,x] + \tfrac12[v,[v,x]].
\]
Here we use the fact that $\fu^-$ is abelian so that $\textup{ad}_v^3=0$. Then the natural map 
\[
X\times \fu^-\longrightarrow E,\quad (x,v)\longmapsto (x,[v,x])
\]
induces an isomorphism $U\cong E$ (over $X$). As $x\in \fu^+$, we have $[\fu^+,x]=0$, which implies
\begin{equation}
\label{eq.TxO}
    T_x \cO \cong [\fg,x] = [\fl,x]\oplus [\fu^-,x].
\end{equation}
As $[\fl, x] \subset \fu^+$, $[\fu^-, x] \subset \fl$ and $T_x X \subset \fu^+$, we get $[\fl, x] = T_xX$ so that $[\fu^-,x]$ can be identified with $T_x^* X$ as follows: For any $v\in \fu^-$, we define a linear map
\begin{equation}
\label{eq.E-to-Cotang}
    [v,x]\colon [\fl,x]\cong T_x X\longrightarrow \bC,\quad [l,x]\longmapsto \omega^{KKS}_x([l,x],[v,x])=\kappa(x,[l,v]).
\end{equation}
This defines an isomorphism $E\cong T^*X$ and, hence, an isomorphism $U\cong T^*X$. 

\subsection{Proof of Proposition \ref{p.symiso}}

In the following, we prove Proposition \ref{p.symiso} by showing that the isomorphism $U\cong T^*X$ introduced in the last section is  actually symplectic. To this end, we consider the associated $1$-forms of the two symplectic forms and prove that their difference is an exact $1$-form.

Let $\Ad(\exp(v))x=o\in U$. Consider a curve $o(t)=\Ad(\exp(v(t)))x(t)$ with $o(0)=o$, $v(t) \in \fu^-$ and $x(t) \in X$.
Expand $v(t) = v + t\,\dot{v} + O(t^2)$ and $x(t) = x + t\,\dot{x} + O(t^2)$.
where $\dot{x}$ (resp. $\dot{v}, \dot{o}$) the differential of $x(t)$ (resp. $v(t), o(t)$) at $t=0$.
Then $\dot{x} \in T_x X$ and  $\dot{v} \in \fu^-$. Then we have the following formula.
\begin{lem} \label{l.dotz}
$
\dot{o}  = \Ad(\exp(v))\bigl([\dot{v}, x] + \dot{x}\bigr).
$
\end{lem}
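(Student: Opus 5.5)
We differentiate $o(t)=\Ad(\exp(v(t)))x(t)$ at $t=0$ using the product rule, and treat the two variations separately:
\[
\dot o=\frac{d}{dt}\Big|_{t=0}\Ad(\exp(v(t)))\,x+\Ad(\exp(v))\,\dot x .
\]
The second term is already of the required form, so everything reduces to the first term, i.e. to the derivative of $t\mapsto \exp(v(t))$ along a curve in $\fu^-$.

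The key input is that $\fu^-$ is abelian. Then $[v,\dot v]=0$, so $\exp(v+t\dot v)=\exp(v)\exp(t\dot v)$ exactly inside the unipotent group $\exp(\fu^-)$; equivalently, the usual correction terms in the derivative of the exponential map vanish. The $O(t^2)$ terms of $v(t)$ contribute nothing at first order. Hence
\[
\frac{d}{dt}\Big|_{t=0}\Ad(\exp(v(t)))x=\Ad(\exp(v))\frac{d}{dt}\Big|_{t=0}\Ad(\exp(t\dot v))x=\Ad(\exp(v))[\dot v,x].
\]
Adding the two contributions gives $\dot o=\Ad(\exp(v))([\dot v,x]+\dot x)$, as claimed.

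The calculation can also be checked directly from the polynomial formula $o=x+[v,x]+\tfrac12[v,[v,x]]$, which holds because $\mathrm{ad}_v^3=0$. Differentiating gives
\[
\dot o=\dot x+[\dot v,x]+[v,\dot x]+\tfrac12[\dot v,[v,x]]+\tfrac12[v,[\dot v,x]]+\tfrac12[v,[v,\dot x]].
\]
Since $\mathrm{ad}_v$ and $\mathrm{ad}_{\dot v}$ commute (again because $[v,\dot v]=0$ and $\fu^-$ is abelian), we have $[\dot v,[v,x]]=[v,[\dot v,x]]$. Expanding $\Ad(\exp(v))y=y+[v,y]+\tfrac12[v,[v,y]]$ with $y=[\dot v,x]+\dot x$ then matches term by term. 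Here $\tfrac12[v,[v,[\dot v,x]]]$ lies in $\fu^-$, and it is the term needed on the right; the expansion is still exact since $\mathrm{ad}_v^3=0$ on all of $\fg$ by the grading. The bracket $[v,[v,[\dot v,x]]]$ need not vanish, but it is precisely what the right-hand side requires, so the two sides agree.

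The only point needing care is the commutation of $\mathrm{ad}_v$ and $\mathrm{ad}_{\dot v}$, which follows from the Jacobi identity and $[v,\dot v]=0$. I do not expect any real obstacle here.
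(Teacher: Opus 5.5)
Your first argument is correct and is essentially the paper's proof. The paper differentiates $g(t)x(t)g(t)^{-1}$ with $g(t)=\exp(v(t))$ and uses that $\fu^-$ is abelian to write $\exp(v(t))=\exp(v)\exp(t\dot v+O(t^2))$, so that $g(0)^{-1}\dot g(0)=\dot v$. Splitting into the $\dot x$- and $\dot v$-variations is the same computation.

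Your supplementary polynomial check, however, is wrong as written. After using $[\dot v,[v,x]]=[v,[\dot v,x]]$, the left side is
\[
\dot x+[\dot v,x]+[v,\dot x]+[v,[\dot v,x]]+\tfrac12[v,[v,\dot x]].
\]
Expanding $\Ad(\exp(v))\bigl([\dot v,x]+\dot x\bigr)$ gives these same five terms plus $\tfrac12[v,[v,[\dot v,x]]]$, which has no counterpart on the left. The two sides agree only because this extra term is zero. Indeed, $[\dot v,x]\in\fl$, hence $[v,[\dot v,x]]\in\fu^-$, and $\fu^-$ is abelian; equivalently, the term has degree $-2$ in a grading concentrated in degrees $-1,0,1$.

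So your claims that this term ``lies in $\fu^-$'', ``need not vanish'' and is ``precisely what the right-hand side requires'' are false. If it were nonzero, your check would contradict the lemma rather than confirm it. Its vanishing is in fact forced: differentiating $\mathrm{ad}_{v+t\dot v}^3x=0$ at $t=0$ gives $3\,\mathrm{ad}_v^2\mathrm{ad}_{\dot v}x=0$. Since your first argument is self-contained, the lemma is proved; just correct or drop that remark.
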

\begin{proof}

A direct computation shows that for any $g(t)\in \fg$, we have
\[
\left.\cfrac{d}{dt}\right|_{t=0} \left( g(t)x(t)g(t)^{-1} \right) = g(0)\left[g(0)^{-1}\dot{g}(0), x\right]g(0)^{-1} + g(0)\dot{x} g(0)^{-1}.
\]
Take $g(t) = \exp(v(t))$ and write $v(t) = v + t \dot{v} + t^2 \epsilon_t$.
As $\fu^-$ is abelian, $[v,t \dot{v} + t^2 \epsilon_t]=0$. Thus
\[
\exp(v(t)) = \exp(v)\exp(t \dot{v} + t^2 \epsilon_t) = \exp(v) + t \exp(v) \dot{v} + O(t^2).
\] 
This gives $g(0)=\exp(v)$ and $\dot{g}(0) = \exp(v) \dot{v}$, hence $g(0)^{-1}\dot{g}(0) =\dot{v}$, and thus the required equality follows.
\end{proof}

Now we define a regular function $f\colon U\rightarrow \bC$ as follows:
\[
f\colon U\longrightarrow \bC,\quad o=\Ad(\exp(v))x\longmapsto f(o)\coloneqq \kappa(x,v).
\]
Note that this is independent of the choice of $v$ (as the difference lies in $\fg_x$ while $\kappa(\fg_x, x)=0$).

\begin{lem}
    Let $\Ad(\exp(v))x=o\in U$ and let $[g,o]\in  [\fg,o] \cong T_o\cO$ be a tangent vector. Then 
\[
df_o([g,o]) =\kappa(g, x) +\cfrac{1}{2} \kappa([v,x], [v,g]).
\]
\end{lem}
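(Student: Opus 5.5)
The plan is to compute $df_o$ along curves of the special form used in Lemma~\ref{l.dotz}, and then to rewrite the answer intrinsically in terms of $g$. Fix $o=\Ad(\exp(v))x\in U$ and a tangent vector $[g,o]\in T_o\cO$. Since $U$ is open in $\cO$, it suffices to find a curve $o(t)=\Ad(\exp(v(t)))x(t)$ with $x(t)\in X$, $v(t)\in\fu^-$, $x(0)=x$, $v(0)=v$ and $\dot o=[g,o]$. For such a curve,
\[
\left.\tfrac{d}{dt}\right|_{t=0} f(o(t))=\kappa(\dot x,v)+\kappa(x,\dot v),
\]
directly from the definition $f(o(t))=\kappa(x(t),v(t))$. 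This uses that $f$ is well defined, as noted before the statement.

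\textbf{Step 1: choosing the curve.} Put $g'\coloneqq \Ad(\exp(-v))g$, so that $[g,o]=\Ad(\exp(v))[g',x]$. Decompose $g'=g'^++g'^l+g'^-$. Because $[\fu^+,x]=0$, we have $[g',x]=[g'^l,x]+[g'^-,x]$. By \eqref{eq.TxO} and the identification $[\fl,x]=T_xX$, the vector $[g'^l,x]$ is tangent to $X$. So I will choose $x(t)$ with $\dot x=[g'^l,x]$, and set $v(t)=v+t g'^-$, so that $\dot v=g'^-$. Lemma~\ref{l.dotz} then gives $\dot o=\Ad(\exp(v))([g'^-,x]+[g'^l,x])=[g,o]$, as required.

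\textbf{Step 2: simplifying the derivative.} Substituting these choices gives
\[
df_o([g,o])=\kappa([g'^l,x],v)+\kappa(x,g'^-).
\]
We now use the orthogonality relations $\kappa(\fu^+,\fu^+\oplus\fl)=0$ and $\kappa(\fl,\fu^+\oplus\fu^-)=0$.
\begin{itemize}
\item The second term equals $\kappa(x,g')$, since $x\in\fu^+$ pairs to zero with $\fu^+\oplus\fl$.
\item For the first term, invariance gives $\kappa([g'^l,x],v)=\kappa(g'^l,[x,v])$. Since $[x,v]\in\fl$, this equals $\kappa(g',[x,v])$.
\end{itemize}
Hence $df_o([g,o])=\kappa(g',\,x-[v,x])$.

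\textbf{Step 3: returning to $g$.} By $\Ad$-invariance, $\kappa(g',y)=\kappa(g,\Ad(\exp(v))y)$. Expanding with $\textup{ad}_v^3=0$ (valid since $\fu^-$ is abelian),
\[
\Ad(\exp(v))\bigl(x-[v,x]\bigr)=x+[v,x]+\tfrac12[v,[v,x]]-[v,x]-[v,[v,x]]=x-\tfrac12[v,[v,x]].
\]
Finally, $-\tfrac12\kappa(g,[v,[v,x]])=\tfrac12\kappa([v,g],[v,x])$ by invariance and antisymmetry. This yields the stated formula.

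The only delicate point is Step 1: realizing an arbitrary tangent vector by a curve of the required form. This rests on the splitting \eqref{eq.TxO} together with $[\fl,x]=T_xX$, both of which are already established. The rest is bookkeeping with the vanishing of $\kappa$ between the pieces of $\fg=\fu^+\oplus\fl\oplus\fu^-$.
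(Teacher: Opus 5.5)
Your proof is correct and follows essentially the same route as the paper: compute along a curve $\Ad(\exp(v(t)))x(t)$ via Lemma~\ref{l.dotz}, decompose $g'=\Ad(\exp(-v))g$ along $\fu^+\oplus\fl\oplus\fu^-$, and use the orthogonality relations of $\kappa$ to reach $df_o([g,o])=\kappa(g',x)-\kappa(g',[v,x])$. The differences are cosmetic. You construct the curve explicitly so that $\dot v$ is exactly the $\fu^-$-component of $g'$, which avoids the paper's remark that $\dot v-u^-\in\fg_x$. You also finish by moving $\Ad(\exp(v))$ onto $x-[v,x]$ instead of expanding $\Ad(\exp(-v))g$, which is a slightly tidier computation.
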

\begin{proof}
 Choose a curve $o(t)=\Ad(\exp(v(t)))x(t)$ with $o(0)=o$ and $\dot{o}(0)=[g,o]$.  By Lemma \ref{l.dotz}, we have
\[
[g,o] = \Ad(\exp(v))\left([\dot{v}, x] + \dot{x}\right).
\]
Applying $\Ad(\exp(-v))$ to both sides yields
\[
[\Ad(\exp(-v))g, x] = [\dot{v}, x] + \dot{x}.
\]
Notice that the right hand side splits into a part in $\fl$ ($[\dot{v},x]$) and a part in $\fu^+$ ($\dot{x}$). Now decompose $\Ad(\exp(-v))g=u^- + l + u^+$ with $u^-\in\fu^-$, $l\in\fl$ and $u^+\in\fu$. Note that $[\fu^+,x]=0$ as $\fu^+$ is abelian. So one gets
\[
[\dot{v}, x] = [u^-, x] \quad \textup{and}\quad \dot{x} = [l, x].
\]
Note that $\dot{v} -u^- \in \fg_x$, so differentiating $f(o(t))=\kappa(x(t), v(t))$ at $t=0$ yields
\begin{align*}
    df_o([g,o]) = \kappa(\dot{x}, v) + \kappa(x, \dot{v})
 & = \kappa([l,x], v) + \kappa(x, u^-) \\
 & = \kappa(x, u^-) - \kappa(l, [v, x]).
\end{align*}
As $\kappa(x,\fl)=\kappa(x,\fu^+)=\kappa(u^-,[v,x])=\kappa(u^+,[v,x])=0$, we get
\[
\kappa(x, \Ad(\exp(-v))g) = \kappa(x, u^-)\quad \text{and}\quad \kappa(\Ad(\exp(-v))g, [v,x]) = \kappa(l, [v,x]),
\]
and hence
\[
df_o([g,o]) = \kappa\left(x,\; \Ad(\exp(-v))g\right) - \kappa\left(\Ad(\exp(-v))g,\; [v,x]\right).
\]
On the other hand, as $\textup{ad}_v^3=0$, we have 
\[
\Ad(\exp(-v))g = g -[v,g] + \cfrac{1}{2}[v,[v,g]].
\]
This yields
\begin{align*}
    \kappa\left(x,\; \Ad(\exp(-v))g\right) &= \kappa(x,g) - \kappa(x, [v,g]) + \cfrac{1}{2} \kappa(x, [v,[v,g]]) \\ 
    &= \kappa(x,g) + \kappa(g, [v,x]) - \cfrac{1}{2} \kappa([v, x], [v,g])
\end{align*}
Note that $[v,[v,g]] \in \fu^-$ for any $g\in \fg$, thus $\kappa([v,[v,g]], [v,x])=0$ and we obtain
\[
\kappa\left(\Ad(\exp(-v))g, [v,x]\right) = \kappa(g, [v,x]) - \kappa([v,g], [v,x]),
\]
from which the required equality follows immediately.
\end{proof}

\begin{lem} \label{l.Liouville1form}
    The pull-back of the Liouville $1$-form on $T^*X$ to $U$ is given by 
\[
\eta_o([g,o])= \kappa(g,[v,x]) - \kappa([v,x], [v, g])\quad  g\in \fg,\;\Ad(\exp(v))x=o\in U.
\]
\end{lem}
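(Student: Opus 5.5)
The computation reduces to evaluating the defining formula of the Liouville form on a tangent vector written in the coordinates $(x,v)$. By definition, $\eta_{(x,w)}(\zeta)=\langle w, d\pi(\zeta)\rangle$. Under the identification $U\cong E\cong T^*X$, the point $o=\Ad(\exp(v))x$ corresponds to the covector $w=[v,x]$ at $x$, acting on $T_xX=[\fl,x]$ by \eqref{eq.E-to-Cotang}, namely $[l,x]\mapsto \kappa(x,[l,v])$. The projection $\pi\colon U\to X$ is $o\mapsto x$. So the plan is as follows:
\begin{enumerate}
\item take a tangent vector $[g,o]$;
\item compute $d\pi([g,o])=\dot{x}$;
\item pair $\dot{x}$ with $[v,x]$ via \eqref{eq.E-to-Cotang}.
\end{enumerate}

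For step (2), I would reuse the setup of the preceding lemma. Choose a curve $o(t)=\Ad(\exp(v(t)))x(t)$ with $\dot{o}(0)=[g,o]$. By Lemma~\ref{l.dotz}, after applying $\Ad(\exp(-v))$ we get $[\Ad(\exp(-v))g,x]=[\dot v,x]+\dot x$. Decompose $\Ad(\exp(-v))g=u^-+l+u^+$ and compare $\fu^+$-components, as in the previous proof. This gives $\dot{x}=[l,x]$, where $l$ is the $\fl$-component of $\Ad(\exp(-v))g$.

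Step (3) then gives
\[
\eta_o([g,o])=\kappa(x,[l,v])=\kappa([v,x],l).
\]
Next I would rewrite $\kappa([v,x],l)$ in terms of $g$. Since $[v,x]\in\fl$, and $\kappa(\fl,\fu^\pm)=0$, we have $\kappa([v,x],l)=\kappa([v,x],\Ad(\exp(-v))g)$. This quantity was already computed in the proof of the previous lemma: using $\Ad(\exp(-v))g=g-[v,g]+\tfrac12[v,[v,g]]$ and $\kappa([v,[v,g]],[v,x])=0$ (because $[v,[v,g]]\in\fu^-$ and $[v,x]\in\fl$), it equals $\kappa(g,[v,x])-\kappa([v,g],[v,x])$. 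This is exactly the claimed formula.

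The main obstacle is bookkeeping of signs and conventions. First, the pairing in \eqref{eq.E-to-Cotang} is $\kappa(x,[l,v])$, and converting it to $\kappa([v,x],l)$ uses invariance: $\kappa(x,[l,v])=\kappa([x,l],v)=\kappa(l,[v,x])$. Second, the covector attached to $o$ must be $[v,x]$ and not $[\dot v,x]$. I would also check that the answer does not depend on the choice of $v$ modulo $\fg_x$: changing $v$ by an element of $\fg_x\cap\fu^-$ leaves $[v,x]$ unchanged, so the formula is well defined. This confirms that the expression is a genuine pull-back of $\eta$ along the isomorphism $U\cong T^*X$.
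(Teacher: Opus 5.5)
Your proof is correct and follows the paper's route: compute $d\pi([g,o])$ and pair it with $[v,x]$ via \eqref{eq.E-to-Cotang}. The only difference is how $d\pi([g,o])$ is found: the paper reads it off directly as the $\fu^+$-component $[g^l,x]+[[g^+,v],x]$ of $[g,o]$, since $\pi=\pi_+$ is a linear projection, whereas you obtain the same vector $[l,x]$, with $l=g^l-[v,g^+]$, from Lemma~\ref{l.dotz} and then reuse the identity for $\kappa(\Ad(\exp(-v))g,[v,x])$ from the preceding proof.

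Your closing well-definedness remark is incomplete as stated, because the formula also involves $[v,g]$ and not only $[v,x]$. It is not needed, though: your derivation already identifies the expression with the intrinsic quantity $\langle [v,x], d\pi([g,o])\rangle$.
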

\begin{proof}
Write $g=g^+ + g^l + g^-$ and $o=x+[v,x]+\frac{1}{2}[v,[v,x]]$. As $[g^+, x]=0$, the projection of $[g,o]\in T_{o}\cO$ to $T_x X\cong [\fl,x]$ is given by 
\[
[g, o]^+ = [g^l, x] + [g^+, [v,x]]  =  [g^l, x] +[[g^+, v], x].
\]
Thus, the Liouville $1$-form on $U$ is given by

\begin{align*}
\eta_o([g,o])  = \langle [v,x], [g,o]^+\rangle & = \omega^{KKS}_o([g^l,x]+[[g^+,v],x],[v,x]) \\
& = \kappa(x, [g^l, v]) + \kappa(x, [[g^+, v], v]) \\
&= \kappa([v, x], g^l) - \kappa([v, x], [v, g^+]) \\
& = \kappa(g,[v,x]) - \kappa([v,x], [v, g]). 
\end{align*}
Here we first identify $o\in \cO$ to the point $(x,[v,x])\in E$ and then regard $[v,x]$ as a covector in $T^*_x X$ defined by \eqref{eq.E-to-Cotang}.
\end{proof}
\begin{proof}[Proof of Proposition \ref{p.symiso}]
Recall that the Kostant--Kirilov--Souriau symplectic form $\omega^{KKS}$ on $\cO$ is in fact exact, which is given by $\omega^{KKS} = d \beta$, where $\beta$ is the 1-form on $\cO$ defined as follows (cf.~\cite[Proposition 6.1]{Jia2021}):
\[
\beta_o([g,o]) \coloneqq \kappa(g, o) = \kappa(g,x) + \kappa(g,[v, x]) - \cfrac{1}{2} \kappa([v,x],[v,g])
\]
While by Lemma \ref{l.Liouville1form}, we have 
\[
\eta_o([g,o])= \kappa([v,x], g) - \kappa([v,x], [v, g]).
\]
It follows that $\eta + df = \beta$ and hence $d\eta = d\beta = \omega^{KKS}$, which implies that the isomorphism $U \cong T^*X$ is indeed symplectic.
\end{proof}

\section{Hamiltonian reduction of \texorpdfstring{$\overline{\bfO}_n\subset \mathfrak{so}_{2n+2}$}{On}}
\label{s.HamRed}

The aim of this section is to investigate a Hamiltonian reduction of the minimal nilpotent orbit closure $\overline{\bfO}_n$ in $\mathfrak{so}_{2n+2}$ with $n\geq 2$ under a specified $\bC^*$-action. Recall also that we denote by $\overline{\bO}_n\subset \mathfrak{sl}_{n}$ the minimal nilpotent orbit closure of type A.

\subsection{Setup}
Let us identify $\mathfrak{so}_{2n+2}$ to the space consisting of matrices of the following form:
\begin{equation}
\label{eq.Form-xi}
	\xi\coloneqq \begin{pmatrix}
		w       &  \bfu    &   0       &     \bfv      \\
		\bfx^t  &   A      & -\bfv^t   &      B       \\
		0       & \bfy     & -w        &      -\bfx    \\
		-\bfy^t & C        & -\bfu^t   &      -A^t    
	\end{pmatrix}
\end{equation}
where $w\in \bC$, $\bfu=(u_i),\bfv=(v_i),\bfx=(x_i),\bfy=(y_i)\in \bC^n$, $A=(a_{ij}),B=(b_{ij}), C=(c_{ij})\in
\operatorname{Mat}_{n\times n}(\bC)$ such that  
\[
C=-C^t\quad \text{and}\quad B=-B^t.
\]
 Given $\xi$, we will denote by $\bfr_i$ and $\bfl_i$ the $i$-th row vector and the $i$-th column vector, respectively. Let $\fp$ be the parabolic subalgebra of $\mathfrak{so}_{2n+2}$ consisting of $\xi$ with $\bfu=\bfv=0$. Let $\fu^+$ be the nilpotent radical of $\fp$ and $\fl$ its Levi part. Then we have a decomposition
\[
\mathfrak{so}_{2n+2} = \fu^+ \oplus \fz(\fl) \oplus [\fl,\fl] \oplus \fu^- \cong \fu^+ \oplus \bC \oplus \mathfrak{so}_{2n} \oplus \fu^-
\]
This decomposition can be explicitly described as follows: Each component consist of elements $\xi\in \mathfrak{so}_{2n+2}$ where only specific components in $\xi$ are non-zero:
\begin{itemize}
	\item $\fu^+$ corresponds to the components $\bfx$ and $\bfy$.
	
	\item $\fu^-$ corresponds to the components $\bfu$ and $\bfv$.
	
	\item $\fz(\fl)$ corresponds to the component $w$.
	
	\item $[\fl,\fl]$ corresponds to the components $A$, $B$ and $C$.
\end{itemize}
The minimal nilpotent orbit closure $\overline{\bfO}_n$ in $\mathfrak{so}_{2n+2}$ can be written as
\[
\overline{\bfO}_n\coloneqq \left\{\xi\in \mathfrak{so}_{2n+2} \mid \xi^2=0\,\text{and}\,\rank(\xi)\leq 2\right\}.
\]

\begin{rem}
	The Lie algebra $\mathfrak{so}_4\cong \mathfrak{sl}_2\times \mathfrak{sl}_2$ is not simple. By abuse of notation, we also denote by $\bfO_1$ the locally closed subset of $\mathfrak{so}_4$ consisting of $\xi\in \mathfrak{so}_4$ such that $\xi^2=0$ and $\rank(\xi)=2$. Then an easy computation shows that $\overline{\bfO}_1$ consists of two $\bC$-lines meeting at $0$.
\end{rem}

Let us consider the following $\bC^*$-subgroup of ${\rm SO}_{2n+2}$ (in the matrix form of \eqref{eq.Form-xi}) that induces a $\bC^*$-action on $\mathfrak{so}_{2n+2}$ via an adjoint action:
$$
{\rm diag}(1, \lambda, \cdots, \lambda, 1, \lambda^{-1}, \cdots, \lambda^{-1}).
$$
This $\bC^*$-action is Hamiltonian with respect to the Kostant--Kirillov--Saurian form on $\bfO_n$ and a direct computation shows that  the $\bC^*$-action on $\overline{\bfO}_n$ is given as follows:
\[
\lambda\cdot w= w, \quad\lambda\cdot \bfx=\lambda\bfx,\quad \lambda\cdot \bfy=\lambda^{-1}\bfy,\quad \lambda\cdot \bfu=\lambda^{-1}\bfu,\quad \lambda\cdot \bfv=\lambda\bfv
\]
and
\[
\lambda\cdot A = A,\quad \lambda\cdot B=\lambda^2 B,\quad\lambda\cdot C=\lambda^{-2}C,
\]
so that the blocks in \eqref{eq.Form-xi} correspond exactly to the following weight decomposition:
\begin{equation}
	\label{eq.Weight-decomp}
	\mathfrak{so}_{2n+2} = \fu^+_{-1}\oplus \fu^+_{1} \oplus \fz(\fl) \oplus [\fl,\fl]_{-2} \oplus [\fl,\fl]_0\oplus [\fl,\fl]_2 \oplus \fu^-_{-1}\oplus \fu^-_{1}.
\end{equation}
 Let $\mu\colon \overline{\bfO}_n\rightarrow \fc^*\cong \bC$ be the associated moment map \cite[\S\,2.1]{FuLiu2026}. Then a straightforward computation shows:
\begin{lem}
    The moment map $\mu$ sends $\xi$ to $\operatorname{Tr}(A)$.
\end{lem}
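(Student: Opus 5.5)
For a one-parameter subgroup acting on a coadjoint orbit by the adjoint action, the moment map is the restriction of the linear functional dual to its infinitesimal generator. So I will compute $\mu$ as $\mu(\xi)=\kappa(h,\xi)$, or equivalently as a nonzero multiple of $\operatorname{Tr}(h\xi)$. Here $h$ is the generator of the $\bC^*$-subgroup in $\mathfrak{so}_{2n+2}$, and $\fc^*$ is identified with $\bC$ via $h$. I will follow the normalization of \cite[\S\,2.1]{FuLiu2026}. Since $\mathfrak{so}_{2n+2}$ is simple, the Killing form is a nonzero constant multiple of the trace form $(a,b)\mapsto\operatorname{Tr}(ab)$. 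It therefore suffices to compute $\operatorname{Tr}(h\xi)$ up to a fixed nonzero scalar, which is absorbed into the identification $\fc^*\cong\bC$.

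\emph{Step 1.} First I check that $\xi\mapsto\kappa(h,\xi)$ is a moment map. For $o\in\bfO_n$ and $g\in\fg$, the function $\phi_h(o)=\kappa(h,o)$ has differential $d\phi_h([g,o])=\kappa(h,[g,o])$. By invariance of $\kappa$ this equals $\kappa(o,[h,g])=\omega^{KKS}_o([h,o],[g,o])$. Hence $\phi_h$ is a Hamiltonian for the fundamental vector field $[h,o]$, up to the sign convention. It is also $\bC^*$-invariant because $h$ commutes with the torus. This is the standard fact, and it is exactly the definition used in \cite{FuLiu2026}.

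\emph{Step 2.} Next I identify $h$. Differentiating $\mathrm{diag}(1,\lambda,\dots,\lambda,1,\lambda^{-1},\dots,\lambda^{-1})$ at $\lambda=1$ gives $h=\mathrm{diag}(0,I_n,0,-I_n)$. In the block form \eqref{eq.Form-xi}, this is the element with $A=I_n$ and all other blocks zero.

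\emph{Step 3.} Finally, $h\xi$ keeps only the second block row of $\xi$ and the negative of the fourth. So $\operatorname{Tr}(h\xi)=\operatorname{Tr}(A)-\operatorname{Tr}(-A^t)$. I will double-check this sign: the diagonal blocks of $h\xi$ are $I_n\cdot A$ and $(-I_n)\cdot(-A^t)$, so the trace is $\operatorname{Tr}A+\operatorname{Tr}A=2\operatorname{Tr}(A)$. Rescaling the identification $\fc^*\cong\bC$ then gives $\mu(\xi)=\operatorname{Tr}(A)$. The formula holds on $\bfO_n$ and hence on its closure $\overline{\bfO}_n$ by continuity.

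The only delicate point is matching conventions, namely the sign and the scalar in the identification $\fc^*\cong\bC$. These affect only a nonzero constant, so they do not affect the zero fibre $\mu^{-1}(0)=\{\operatorname{Tr}(A)=0\}$, which is what is used later.
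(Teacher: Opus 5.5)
Your proposal is correct and supplies exactly the straightforward computation the paper leaves implicit. You take the moment map to be the restriction of the $\mathrm{SO}_{2n+2}$-moment map $\bfO_n\hookrightarrow\fg\cong\fg^*$ paired with the generator $h=\mathrm{diag}(0,I_n,0,-I_n)$, which gives $\kappa(h,\xi)=2n\operatorname{Tr}(h\xi)=4n\operatorname{Tr}(A)$; the nonzero scalar is absorbed into the identification $\fc^*\cong\bC$ and does not change the shell $N=\mu^{-1}(0)$.
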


\subsection{Geometry of the shell $N$}
\label{ss.Shell}


Let $N\coloneqq \mu^{-1}(0)$ be the scheme-theoretic fiber of the moment map $\mu$, called the \emph{shell}. Denote by $M$ the Hamiltonian reduction $\overline{\bfO}_n/\!\!/\!\!/\bC^*:=N/\!/\bC^*$. We will give a detailed description of the geometry of the shell $N$ and also the $\bC^*$-orbits on $N$. Set $\bZ_2\coloneqq \bZ/2\bZ$. Let us introduce three locally closed subsets of $N$ as follows:
\begin{align*}
	N_{(e)} 
	  & \coloneqq \{ \xi \in N \mid ({\bf x} \neq 0 \text{ and } {\bf y} \neq 0) \text{ or } ({\bf u} \neq 0 \text{ and } {\bf v} \neq 0) \}.\\
	N_{(\bZ_2)}
	  & \coloneqq \left(N\cap \fl\right)\setminus [\fl,\fl]_0.\\
	N_{(\bC^*)}
	  & \coloneqq N\cap [\fl,\fl]_0.
\end{align*}

\begin{lem}
	\label{lem.Eq-N}
	The shell $N$ is an irreducible normal variety defined by $\operatorname{Tr}(A)=0$ as in \eqref{eq.Form-xi} such that $N_{(\bC^*)}=N_{\sing}$ is isomorphic to $\overline{\bO}_{n}\subset [\fl,\fl]_0\cong \mathfrak{sl}_{n}$.
\end{lem}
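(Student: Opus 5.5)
The plan is to realize $N$ as a hyperplane section of the normal Cohen--Macaulay variety $\overline{\bfO}_n$ and to apply Serre's criterion. Recall that $\overline{\bfO}_n$ is normal of dimension $2(2n+2)-6=4n-2$. It is Gorenstein with rational singularities, hence Cohen--Macaulay, and its singular locus is $\{0\}$.

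The function $\mu=\operatorname{Tr}(A)$ is a nonzero linear form on $\overline{\bfO}_n$. Indeed, a rank-two element $\xi\in\overline{\bfO}_n$ with $\operatorname{Tr}(A)\neq 0$ is easy to exhibit inside $[\fl,\fl]_0\cong\mathfrak{gl}_n$-type blocks: take $\xi$ whose blocks are $w=0$ and $A=E_{11}$ combined with suitable $\bfx,\bfu$ entries. Since $\overline{\bfO}_n$ is irreducible, $\mu$ is a nonzerodivisor on $\sO(\overline{\bfO}_n)$. Therefore the scheme $N=\mu^{-1}(0)$ is Cohen--Macaulay of pure dimension $4n-3$, and in particular it satisfies $S_2$.

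It remains to prove $R_1$ and irreducibility. Both reduce to computing the locus where $d\mu$ vanishes on $T\bfO_n$. By the standard moment map property, $d\mu_\xi([g,\xi])=\kappa(h,[g,\xi])$ up to a scalar, where $h$ generates $\fc$. Hence $d\mu_\xi=0$ on $T_\xi\bfO_n$ if and only if $[h,\xi]=0$, that is, $\xi$ is $\bC^*$-fixed. By the weight decomposition \eqref{eq.Weight-decomp}, the fixed points are the elements of weight zero, namely those in $\fz(\fl)\oplus[\fl,\fl]_0$. Suppose $\xi\in\overline{\bfO}_n$ with only $w$ and $A$ nonzero. Then $\xi^2=0$ forces $w=0$ and $A^2=0$, and the rank condition gives $\rank A\le 1$. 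Together with $\operatorname{Tr}A=0$ this is exactly $\overline{\bO}_n$, embedded via $A\mapsto \operatorname{diag}(0,A,0,-A^t)$. So the smooth locus of $N$ is $N\setminus N_{(\bC^*)}$, and $N_{\sing}\subseteq N_{(\bC^*)}$.

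The dimension count gives $\dim N_{(\bC^*)}=\dim\overline{\bO}_n=2n-2$. This is less than $4n-3-1$ for $n\ge 2$, so $R_1$ holds and $N$ is normal. The reverse inclusion $N_{(\bC^*)}\subseteq N_{\sing}$ follows because at these points the Zariski tangent space of $N$ contains all of $T_\xi\overline{\bfO}_n$, which has dimension $4n-2>\dim N$. At $0$ the point is singular anyway.

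For irreducibility, a normal connected scheme is irreducible. Connectedness holds because $N$ is a cone, being stable under scaling. Finally, the reducedness of the scheme-theoretic fiber is part of the conclusion of Serre's criterion.

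The main obstacle I expect is justifying the criticality statement $d\mu_\xi=0\iff[h,\xi]=0$ precisely on $\bfO_n$. This relies on the nondegeneracy of $\omega^{KKS}$: $d\mu=\iota_{h^\sharp}\omega^{KKS}$ vanishes only when the vector field $h^\sharp$ vanishes. The second delicate point is checking explicitly that the fixed locus intersected with $\overline{\bfO}_n$ is exactly the stated copy of $\overline{\bO}_n$. Both are direct but require care with the matrix form \eqref{eq.Form-xi}.
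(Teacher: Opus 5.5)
Your proof is correct apart from one slip, and it reaches irreducibility by a different route from the paper.

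\textbf{The slip.} Your witness for $\mu\not\equiv 0$ on $\overline{\bfO}_n$ does not exist as described. If only $w,\bfx,\bfu,A$ are nonzero, then $\xi=\operatorname{diag}(P,-P^t)$ with $P=\begin{pmatrix} w & \bfu\\ \bfx^t & A\end{pmatrix}$. The condition $\xi^2=0$ forces $P$ to be nilpotent, so $\operatorname{Tr}(A)=-w$, which is $0$ when $w=0$. Two easy fixes:
\begin{itemize}
\item allow $w\neq 0$: take $P=\alpha\beta^t$ with $\alpha=(1,1,0,\dots,0)^t$ and $\beta=(1,-1,0,\dots,0)^t$, giving $w=1$ and $A=-E_{11}$;
\item or note that $\bfO_n$ spans $\mathfrak{so}_{2n+2}$, since its span is a nonzero ideal of a simple Lie algebra, so no nonzero linear form vanishes on it.
\end{itemize}
A minor polish: for $0\in N_{\sing}$, say that $N_{\sing}$ is closed and contains the nonempty cone $N_{(\bC^*)}\setminus\{0\}$, whose closure contains $0$.

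\textbf{Singular locus and normality.} Here you do in substance what the paper does. The paper quotes \cite[Lemma 2.6]{FuLiu2026}: singular points of the shell are exactly the points with non-finite isotropy, and Cohen--Macaulayness plus the codimension count gives normality. You instead prove these facts directly, from $d\mu_\xi([g,\xi])\propto\kappa(g,[h,\xi])$ and Serre's criterion.

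\textbf{Irreducibility.} Here the routes genuinely differ. The paper views $\bP(N)$ as a hyperplane section of $\bP(\bfO_n)$, described as a rational homogeneous space of Picard number one in its minimal embedding, so a hyperplane section cannot split. You deduce irreducibility from normality plus connectedness of the cone. Your argument is more elementary and uniform in $n$. For $n=2$ one has $\mathfrak{so}_6\cong\mathfrak{sl}_4$, and $\bP(\bfO_2)$ is the point--plane incidence variety in $\bP^3\times(\bP^3)^*$, which has Picard number two. So the paper's Picard-number argument does not literally apply in that case, while yours covers it without change.
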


\begin{proof}
	As $\mu(\xi)=\operatorname{Tr}(A)$, the shell $N$ is defined by $\operatorname{Tr}(A)=0$. Note that $N_{\sing}$ consists of points in $N$ whose isotropy group is not finite \cite[Lemma 2.6]{FuLiu2026}. Thus an easy computation shows $N_{\sing} = N \cap [\fl,\fl]_0$. In particular, the projection mapping $\xi$ to $A$ yields an isomorphism $N_{\sing} \cong \overline{\bO}_n$. Hence, we have
	\[
	\dim N_{\sing} = 2n-2 = \dim N - (2n-1).
	\]
	On the other hand, since $\overline{\bfO}_n$ is Cohen-Macaulay and $n \geq 2$,  the shell $N$ is a normal variety \cite[Lemma 2.6]{FuLiu2026}. Finally, note that the projectivization $\bP(\bfO_n) \subset \bP(\mathfrak{so}_{2n+2})$ is a $(4n-3)$-dimensional rational homogeneous space of Picard number one with its minimal embedding, and $N$ is the affine cone over the irreducible hyperplane section $\bP(N)$ of $\bP(\bfO_{n})$. Hence it is indeed irreducible.
\end{proof}

The following result justifies the choice of our notation.

\begin{prop}
	\label{prop.Orbits-N}
	Let $\xi \in N$ be a point whose $\bC^*$-orbit $O_{\xi} =  \bC^* \cdot\xi$ is closed.
	\begin{enumerate}
		\item\label{i1.prop.Orbits-N} The isotropy subgroup $G_{\xi}$ is isomorphic to one of the following:
		\[
		\{e\},\quad \bZ_2,\quad \bC^*.
		\]
		
		\item\label{i2.prop.Orbits-N} $G_{\xi} \cong \bC^*$ if and only if $O_{\xi} \subset N_{(\bC^*)}$.
		
		\item\label{i3.prop.Orbits-N} $G_{\xi} \cong \bZ_2$ if and only if $O_{\xi} \subset N_{(\bZ_2)}$.
		
		\item\label{i4.prop.Orbits-N} $G_{\xi} \cong \{e\}$ if and only if $O_{\xi} \subset N_{(e)}$. 
	\end{enumerate}
\end{prop}

\begin{proof}
	The first statement is obvious. Now note that $G_{\xi}=\bC^*$ only if $\xi \in \fz(\fl)\oplus [\fl,\fl]_0$. On the other hand, as $\xi^2=0$, one gets 
	\[
	N\cap (\fz(\fl)\oplus [\fl,\fl]_0) = N\cap [\fl,\fl]_0,
	\]
	which proves the second statement. For the third statement, one observes that $G_{\xi}=\bZ_2$ only if $\xi \in \fl\setminus (\fz(\fl)\oplus [\fl,\fl]_0)$. Then the result follows as $N\cap \fl = N\cap [\fl,\fl]$

	For the last statement, let $\xi \in N\setminus N_{(e)}$ be a point with trivial isotropy subgroup, so that $\xi \in N\setminus \fl$, that is, one of $\bfx$, $\bfy$, $\bfu$ and $\bfv$ is non-zero. Without loss of generality, we may assume that $\bfx \neq 0$ so that $\bfy=0$ as $\xi\not\in N_{(e)}$, and the other cases are analogous. We will prove that $O_{\xi}$ is not closed in $N$, i.e., $C=0$ and $\bfu=0$. Now suppose that $x_i\not=0$. Then one observes that the vectors $\bfr_{i+1}$ and $\bfr_{n+2}$ (respectively $\bfl_1$ and $\bfl_{n+i+2}$) are linearly independent and non-zero. Recall that $\xi \in N$ if and only if the following conditions hold:
	\begin{equation}
		\label{eq.N}
		\xi^2 = 0,\quad \rank(\xi) \leq 2, \quad \operatorname{Tr}(A) = 0.
	\end{equation}
	The second condition means that every row $\bfr_j$ (resp. every column $\bfl_j$) of $\xi$ can be expressed as a linear combination of $\bfr_{i+1}$ and $\bfr_{n+2}$ (resp. $\bfl_1$ and $\bfl_{n+i+2}$). As a consequence, as $\bfy = 0$, for each $j \geq n+2$ we have $\bfr_j = \lambda_j \bfr_{n+2}$ for some $\lambda_j \in \bC$, which implies $C = 0$.  Similarly, for $1 \leq j \leq n$, one gets 
	\[
	a_{jj} = \frac{x_j}{x_i} a_{ij},
	\]
	and therefore the last condition of \eqref{eq.N} yields
	\[
	0 = x_i \operatorname{Tr}(A)  = \sum_{j=1}^n x_j a_{ij}.
	\]
	On the other hand, note that the first condition in \eqref{eq.N} can be reformulated as follows:
	\[
	\bfr_{i+1}\bfl_1=\bfr_{i+1}\bfl_{n+i+2}=\bfr_{n+2}\bfl_1=\bfr_{n+2}\bfl_{n+i+2}=0
	\]
	which in particular implies 
	\[
	0=\bfr_{i+1}\bfl_1=	wx_i+\sum_{j=1}^n x_j a_{ij}=0.
	\]
	Since $x_i \neq 0$, it follows that $w = 0$ and hence $\bfr_1 = \lambda_1 \bfr_{n+2}$ for some $\lambda_1 \in \bC$. This implies that $\bfu = 0$ as $\bfy=0$.
\end{proof}

One can easily see that every $\bC^*$-orbit contained in $N_{(\bC^*)}$ or $N_{(e)}$ is closed. For  the orbits in $N_{(\bZ_2)}$, we prove:

\begin{lem}
	\label{lem.Orbits-Z2}
	$\dim N_{(\bZ_2)}=4n-7$ and the following statements hold.
	\begin{enumerate}
		\item If $n\geq 3$, then $\overline{N}_{(\bZ_2)}=N\cap [\fl,\fl]$ is irreducible.
		
		\item If $n=2$ or $3$, then there are no closed $\bC^*$-orbits contained in $N_{(\bZ_2)}$.
		
		\item If $n\geq 4$, then a general $\bC^*$-orbit contained in $N_{(\bZ_2)}$ is closed in $N$.
	\end{enumerate}
\end{lem}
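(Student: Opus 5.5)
The plan is to reduce everything to the minimal nilpotent orbit of $[\fl,\fl]\cong\mathfrak{so}_{2n}$. A point $\xi\in N\cap\fl$ has $w=0$, because $\xi^2=0$ gives $w^2=0$ on the $(1,1)$-entry, exactly as in the proof of Proposition~\ref{prop.Orbits-N}. Hence $N\cap\fl=N\cap[\fl,\fl]$. This set consists of the $\eta=\left(\begin{smallmatrix} A & B\\ C & -A^t\end{smallmatrix}\right)\in\mathfrak{so}_{2n}$ with $\eta^2=0$, $\rank\eta\le 2$ and $\operatorname{Tr}(A)=0$. It is therefore the zero fibre of the moment map for the analogous $\bC^*$-action $\operatorname{diag}(\lambda,\dots,\lambda,\lambda^{-1},\dots,\lambda^{-1})$ on the minimal orbit closure of $\mathfrak{so}_{2n}$.

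To parametrize it, I will use the standard description of rank-$2$ square-zero elements of $\mathfrak{so}_{2n}$. Such an element is $\eta=p\,q^tJ-q\,p^tJ$, where $p=(\bfa,\bfa'')$ and $q=(\bfa',\bfa''')$ span an isotropic plane in $\bC^{2n}=\bC^n\oplus\bC^n$. In these coordinates $B$ is proportional to $\bfa\wedge\bfa'$, $C$ is proportional to $\bfa''\wedge\bfa'''$, and $\operatorname{Tr}A=\pm(\bfa\cdot\bfa'''-\bfa'\cdot\bfa'')$. Isotropy of the plane means $\bfa\cdot\bfa''=\bfa'\cdot\bfa'''=0$ and $\bfa\cdot\bfa'''+\bfa'\cdot\bfa''=0$. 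Combined with $\operatorname{Tr}A=0$, this says that each of $\bfa,\bfa'$ is orthogonal to each of $\bfa'',\bfa'''$.

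\emph{Dimension and (1).} The minimal orbit of $\mathfrak{so}_{2n}$ has dimension $4n-6$. For $n\ge 3$, $\mathfrak{so}_{2n}$ is simple (for $n=3$ it is $\mathfrak{sl}_4$), and $N\cap[\fl,\fl]$ is the affine cone over a hyperplane section of the projectivized minimal orbit. So the argument of Lemma~\ref{lem.Eq-N} applies verbatim: the shell is normal by \cite[Lemma 2.6]{FuLiu2026} and connected by Lefschetz, hence irreducible of dimension $4n-7$. The removed locus $N\cap[\fl,\fl]_0\cong\overline{\bO}_n$ has dimension $2n-2<4n-7$ for $n\ge3$. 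Hence $N_{(\bZ_2)}$ is dense in $N\cap[\fl,\fl]$, which gives both $\dim N_{(\bZ_2)}=4n-7$ and~(1). For $n=2$, the remark on $\overline{\bfO}_1$ shows directly that $N\cap[\fl,\fl]$ is a union of lines, of dimension $1=4n-7$.

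\emph{(2) and (3).} On $N_{(\bZ_2)}$ the weights are $A\mapsto A$, $B\mapsto\lambda^2B$, $C\mapsto\lambda^{-2}C$. If $C=0$ (resp.\ $B=0$), then letting $\lambda\to0$ (resp.\ $\lambda\to\infty$) shows the orbit closure meets a different orbit, so the orbit is not closed. Conversely, if $B\ne0\ne C$, the orbit is closed: the function $b_{ij}c_{kl}$ is invariant and nonzero, so the orbit is a fibre of the quotient map intersected with a locus of finite stabilizers. Thus a closed orbit in $N_{(\bZ_2)}$ exists iff some point has $B\neq0$ and $C\neq0$. That requires $\bfa,\bfa'$ independent and $\bfa'',\bfa'''$ independent, with the latter lying in the orthogonal of $\langle\bfa,\bfa'\rangle$, which has dimension $n-2$. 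For $n=2,3$ this orthogonal has dimension $\le1$, which proves~(2). For $n\ge4$ such configurations exist, and $B\ne0\ne C$ is an open condition on the irreducible $N_{(\bZ_2)}$ from~(1), which gives~(3).

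The main obstacle is making the isotropic-plane parametrization and the identification of $B$, $C$, $\operatorname{Tr}A$ precise and surjective onto $N\cap[\fl,\fl]$, up to the sign conventions of \eqref{eq.Form-xi}. I would verify it against the defining equations $\eta^2=0$ and $\rank\le2$.
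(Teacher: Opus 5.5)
The isotropic-plane argument for (2) and (3) is sound. Part (1), however, breaks at $n=3$, and it cannot be repaired there because the assertion is false.

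\textbf{The gap in (1) at $n=3$.} Your step ``the argument of Lemma~\ref{lem.Eq-N} applies verbatim'' needs the shell $N\cap[\fl,\fl]$ to be normal. That requires $R_1$, and $R_1$ fails for $n=3$:
\begin{itemize}
\item The shell is singular at every nonzero fixed point, because there $d\operatorname{Tr}A$ vanishes on the tangent space of $\bfO_{n-1}$ (it is the moment map).
\item The fixed locus $N_{(\bC^*)}\cong\overline{\bO}_n$ has dimension $2n-2$ inside the $(4n-7)$-dimensional shell, so its codimension is $2n-5$, which is $1$ for $n=3$.
\end{itemize}
In fact your own analysis in (2) shows that for $n=3$ no point has $B\neq0\neq C$. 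Hence $N_{(\bZ_2)}=\{B\neq 0\}\sqcup\{C\neq 0\}$ is a disjoint union of two nonempty open subsets. For instance, take $p=(e_1,0),\,q=(e_2,0)$ for the first and $p=(0,e_1),\,q=(0,e_2)$ for the second. So $N_{(\bZ_2)}$ is disconnected, and $\overline{N}_{(\bZ_2)}$ has two $5$-dimensional components.

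You can also see this in $\mathfrak{sl}_4\cong\mathfrak{so}_6$. Up to a cover the torus is $\operatorname{diag}(t,t,t,t^{-3})$, and the moment map is a multiple of $X_{44}$. The shell is therefore $\{vw^t : w^tv=0,\ v_4w_4=0\}$, visibly the union of the loci $v_4=0$ and $w_4=0$.

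The paper's proof has the same defect. It asserts that $\bP(\bfO_{n-1})$ has Picard number one, but $\bP(\bfO_2)$ is the incidence variety in $\bP^3\times(\bP^3)^\vee$, which has Picard number two. The paper's own computation, $c_{kl}=0$ on $\{b_{ij}\neq0\}$ for $n=3$, already exhibits the disconnection. So (1) should read $n\geq 4$. In that range your normality argument works: the codimension is $2n-5\geq 3$, and an affine cone is automatically connected, so Lefschetz is not needed. Only the case $n\geq4$ is used later, for the irreducibility of $\overline{M}_{(\bZ_2)}$.

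\textbf{Two smaller points.}
\begin{itemize}
\item The dimension statement survives for $n=3$, but not via irreducibility. The shell $N\cap[\fl,\fl]$ is cut out of the irreducible $(4n-6)$-dimensional $\overline{\bfO}_{n-1}$ by the non-zero function $\operatorname{Tr}A$. By Krull, every component has dimension $4n-7$, and each meets $N_{(\bZ_2)}$ because $2n-2<4n-7$.
\item For $n=2$, the claim ``$N\cap[\fl,\fl]$ is a union of lines'' is wrong: by Remark~\ref{rm.barNZ2} it contains the quadric cone $N_{(\bC^*)}\cong\overline{\bO}_2$. What is true, and immediate from your parametrization, is that $N_{(\bZ_2)}$ is two punctured lines. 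Indeed, $B\neq 0$ forces $\langle\bfa,\bfa'\rangle=\bC^2$, hence $\bfa''=\bfa'''=0$ and $A=C=0$; the case $C\neq0$ is symmetric.
\end{itemize}

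\textbf{Comparison for (2) and (3).} Your route genuinely differs from the paper's.
\begin{itemize}
\item The paper works in the chart $\{b_{ij}\neq0\}$ and identifies it with an open subset of $\bC^{4n-7}$; this is also where its dimension count comes from. It then decides closedness from $c_{kl}=(a_{ik}a_{jl}-a_{il}a_{jk})/b_{ij}$.
\item Your description $B=\bfa\wedge\bfa'$, $C=\bfa''\wedge\bfa'''$, $\langle\bfa,\bfa'\rangle\perp\langle\bfa'',\bfa'''\rangle$ avoids the chart entirely. It also explains the threshold $n\geq 4$ conceptually: one needs a $2$-plane inside an $(n-2)$-dimensional orthogonal complement.
\end{itemize}
The surjectivity you flagged as the main obstacle is standard. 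For skew-adjoint $\eta$ one has $\ker\eta=(\operatorname{Im}\eta)^{\perp}$, so $\eta^2=0$ makes $W=\operatorname{Im}\eta$ an isotropic plane. Then $\eta$ is determined by a skew form on $V/W^{\perp}\cong W^*$, which gives the form $pq^tJ-qp^tJ$.

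Your closedness criterion is better phrased as follows. A non-closed $\bC^*$-orbit has a fixed point ($B=C=0$) in its closure. The non-zero invariant $b_{ij}c_{kl}$ is constant on the orbit closure and vanishes at such fixed points, which rules this out.
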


\begin{proof}
	We will identify $[\fl,\fl]$ with $\mathfrak{so}_{2n}$, which consists of matrices of the following form:
	\[
	\theta\coloneqq \begin{pmatrix}
		A &  B \\
		C & -A^t
	\end{pmatrix}\; \text{with}\;
	B=-B^t,\; C=-C^t.
	\]
	The natural map $\overline{\bfO}_n\cap \fl\rightarrow [\fl,\fl]\cong \mathfrak{so}_{2n}$, $\xi\mapsto \theta$, then yields an isomorphism 
	\[
	\overline{\bfO}_n\cap \fl \cong \overline{\bfO}_{n-1}.
	\]
	Under this isomorphism, the closed subset $N\cap \fl$ is defined by the following equations:
	\begin{equation}
		\label{eq.On-1}
			\rank(\theta)\leq 2,\quad \theta^2=0,\quad \operatorname{Tr}(A)=0.
	\end{equation}
	If $n\geq 3$, note that $\bP (\bfO_{n-1})\subset \bP([\fl,\fl])$ is a rational homogeneous space of Picard number one and with its minimal embedding such that $N\cap \fl=N\cap [\fl,\fl]$ is the affine cone of a hyperplane section of $\bP(\bfO_{{n-1}})$, so it is irreducible. Then the first statement follows because $N_{(\bZ_2)}$ is an open subset of $N\cap [\fl,\fl]$.

	For the last two statements, without loss of generality, we may assume that $B\not=0$ so that $b_{ij}=-b_{ji}\not=0$ for some $1\leq i\not=j\leq n$. Denote by $\bfm_i$ (resp. $\bfn_i$) the $i$-th row vector (resp. column vector) of $\theta$. Then the first condition in \eqref{eq.On-1} is equivalent to the fact that $\bfm_i$ and $\bfm_j$ (resp. $\bfn_{n+i}$ and $\bfn_{n+j}$) are linearly independent such that every row vector (resp. column vector) of $\theta$ can be written as a linear combination of them. In particular, the second equation in \eqref{eq.On-1} can be reformulated as follows:
	\[
	\bfm_i \bfn_{n+i} = \bfm_i \bfn_{n+j} = \bfm_j \bfn_{n+i} = \bfm_j \bfn_{n+j} = 0.
	\]
	Then an easy computation shows that the algebraic variety 
	\[
	U\coloneqq N\cap \fl\cap \{b_{ij}=-b_{ji}\not=0\}
	\]
	is isomorphic to the locally closed subvariety of
	\[
	\bC^{4n-3}\coloneqq \{(\bfm_i,\bfm_j)\in \bC^{4n}\mid b_{ii}=b_{jj}=0\;\,\text{and}\;\,b_{ij}=-b_{ji}\}
	\]
	defined by the following equations:
	\begin{equation}
		\begin{dcases}
			\bfm_i \bfn_{n+i} = -2 \sum_{s=1}^n a_{is} b_{is} = 0 \\
			\bfm_i \bfn_{n+j} = \bfm_j \bfn_{n+i} = \sum_{s=1}^n (-a_{is} b_{js} - a_{js} b_{is}) = 0 \\
			\bfm_{j} \bfn_{n+j} = -2 \sum_{s=1}^n a_{js} b_{js} = 0 \\
			b_{ij}\operatorname{Tr}(A) = \sum_{s=1}^n (-a_{is}b_{js} + a_{js} b_{is}) = 0 \\
			b_{ij}=-b_{ji}\not=0
		\end{dcases}
	\end{equation}
	Then one can easily derive that $U$ is actually isomorphic to the open subset of 
	\[
	\bC^{4n-7}\coloneqq\{(\widehat{\bfm}_i,\widehat{\bfm}_j)\}
	\]
	defined by $b_{ij}\not=0$, where
	\[
	\widehat{\bfm}_i\coloneqq (a_{i1},\dots,\widehat{a}_{ii},\dots,\widehat{a}_{ij},\dots,a_{in},b_{i1},\dots,\widehat{b}_{ii},\dots,b_{ij},\dots,b_{in})\in \bC^{2n-3}
	\]
	and
	\[
	\widehat{\bfm}_j\coloneqq (a_{j1},\dots,\widehat{a}_{ji},\dots,\widehat{a}_{jj},\dots,a_{jn},b_{j1},\dots,\widehat{b}_{ji},\dots,\widehat{b}_{jj},\dots,b_{jn}) \in \bC^{2n-4}.
	\]
	Since the rows of $\theta$ can be written as a linear combination of $\bfm_i$ and $\bfm_j$, for any $1\leq k,l\leq n$, a straightforward computation then yields 
	\[
	c_{kl} = \frac{a_{ik}a_{jl} - a_{il}a_{jk}}{b_{ij}}.
	\]
	For $n=2$ or $3$, one can then derive that $c_{kl}=0$ for any $k,l$ and consequently $O_{\xi}$ is not closed. Nevertheless, for $n\geq 4$, one can choose $k,l$ different from $i,j$ so that $c_{kl}\not=0$ for any general $\widehat{\bfm}_i$ and $\widehat{\bfm}_j$, hence a general $\bC^*$-orbit is closed.	 
\end{proof}

\begin{rem}
	\label{rm.barNZ2}
	\begin{enumerate}
		\item By the above arguments, for $n=2$, the inclusion $\overline{N}_{(\bZ_2)}\subset N\cap \fl=N\cap [\fl,\fl]$ is strict. Indeed, the variety $N\cap [\fl,\fl]$ contains three irreducible components, two of which are isomorphic to $\bC$ and are contained in $\overline{N}_{(\bZ_2)}$, while the third is $N_{(\bC^*)}$, which is isomorphic to the $2$-dimensional quadric cone $\overline{\bO}_2\subset \mathfrak{sl}_2$.
		
		\item The same computation as in the proof of Lemma \ref{lem.Orbits-Z2} will be applied several times in the sequel. To simplify the exposition, in the latter we will only state the consequence of the computation, leaving the details to the appendix.
	\end{enumerate}
\end{rem}

\subsection{Geometry of $\overline{\bfO}_n/\!\!/\!\!/\bC^*$}

Recall that the Hamiltonian reduction $M\coloneqq \overline{\bfO}_n/\!\!/\!\!/\bC^*$ is defined as follows \cite[\S\,3.3]{FuLiu2026}:
\[
M = \overline{\bfO}_n/\!\!/\!\!/\bC^* \coloneqq N/\!/\bC^*.
\]
Let $\pi_N\colon N\rightarrow M$ be the quotient map. Then $M$ parameterizes the closed orbits contained in the shell $N$, and it can be stratified into a locally closed subset as follows \cite[\S\,3.2]{FuLiu2026}:
\begin{equation}
\label{eq.Iso-Dec-M}
    M = M_{(e)} \sqcup M_{(\bZ_2)} \sqcup M_{(\bC^*)}
\end{equation}
where $M_{(H)}$ parameterizes the closed orbits whose stabilizer is conjugate to $H$, that is, $M_{(H)}=\pi_N(N_{(H)})$. Thanks to Proposition \ref{prop.Orbits-N}, Lemmas \ref{lem.Eq-N} and \ref{lem.Orbits-Z2}, we have the following:
	\begin{enumerate}
		\item The stratum $M_{(e)}$ is an open subset of dimension $4n-4$.
		
		\item The stratum $M_{(\bZ_2)}$ is empty if and only if $n\leq 3$, and if $n\geq 4$, then $\overline{M}_{(\bZ_2)}=M\setminus M_{(e)}$ is irreducible and of dimension $4n-8$.
		
		\item The stratum $M_{(\bC^*)}$ is an irreducible closed subset of dimension $2n-2$, which is isomorphic to $\overline{\bO}_{n}\subset \mathfrak{so}_{2n}$.
	\end{enumerate}
The natural scaling on $\overline{\bfO}_n$ induces a cone structure on $M$ with vertex $o=\pi_N(0)$. In the sequel of this section, we aim to give a detailed description of the singularities of $M$ away from its vertex $o$. Firstly, we observe:

\begin{prop}
	If $n\geq 3$, the Hamiltonian reduction $M$ is a symplectic variety with terminal singularities.
\end{prop}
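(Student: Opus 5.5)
The plan is to rely on the general machinery of \cite{FuLiu2026} for Hamiltonian reductions of conical symplectic varieties by tori. That theory says roughly the following. Suppose the shell $N=\mu^{-1}(0)$ is normal and irreducible, and the generic closed orbit in $N$ has finite (here trivial) stabilizer. Then the reduction $M=N/\!/\bC^*$ is a normal variety carrying a symplectic form on its regular locus, and this form extends over any resolution, so $M$ is a symplectic variety. The hypotheses are already in hand. Lemma~\ref{lem.Eq-N} gives that $N$ is irreducible and normal. Proposition~\ref{prop.Orbits-N} shows that closed orbits in the dense open set $N_{(e)}$ have trivial stabilizer. The open stratum $M_{(e)}$ is smooth of dimension $4n-4$, and the reduced form on it is symplectic by the standard Marsden--Weinstein argument. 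Normality of $M$ is automatic, since $M$ is a good quotient of a normal variety. Extension of the form over a resolution I would take from the corresponding result in \cite{FuLiu2026}, which ultimately comes from the fact that Hamiltonian reductions of symplectic singularities are symplectic singularities.

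For terminality I would use Namikawa's criterion: a symplectic variety is terminal if and only if its singular locus has codimension at least $4$. By \eqref{eq.Iso-Dec-M}, the singular locus of $M$ is contained in $M\setminus M_{(e)}=\overline{M}_{(\bZ_2)}\cup M_{(\bC^*)}$, so it suffices to bound the dimensions of these two strata.
\begin{itemize}
\item $\overline{M}_{(\bZ_2)}$ is empty for $n\le 3$, and has dimension $4n-8$ for $n\ge 4$, hence codimension $4$.
\item $M_{(\bC^*)}$ has dimension $2n-2$, hence codimension $2n-2\ge 4$ when $n\ge 3$.
\end{itemize}
Therefore $\codim_M M_{\sing}\ge 4$, and the criterion gives terminality.

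The main obstacle is the first step: making sure the symplectic form on $M_{(e)}$ really extends over a resolution, i.e.\ that $M$ has symplectic rather than merely rational Gorenstein singularities. If the cited general theorem from \cite{FuLiu2026} needs additional hypotheses, such as $N$ having rational singularities or a codimension bound on the locus of non-free orbits, I would check them directly. The locus of non-free orbits is $N\cap\fl$, of dimension $4n-6$ inside $N$ of dimension $4n-1$, so the bound should be available. As a fallback I would use Flenner's extension theorem. Because $M$ is normal with singular locus of codimension at least $4$ (in fact $\ge 2$ suffices for reflexive forms, and $\ge 4$ is more than Flenner needs), the $2$-form on $M_{\reg}$ extends to any resolution. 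This gives the symplectic property independently of the general theory.
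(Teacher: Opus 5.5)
Your fallback argument is the paper's proof: $M_{(e)}$ is smooth and symplectic, and $M\setminus M_{(e)}$ has codimension $\geq 4$ because the other strata have dimensions $2n-2$ and $4n-8$ inside $\dim M=4n-4$; Flenner's theorem then makes $M$ symplectic and Namikawa's criterion gives terminality, so the appeal to an unspecified general theorem from \cite{FuLiu2026} is unnecessary. There are two harmless slips: Flenner's extension theorem for $2$-forms requires codimension $\geq 4$, so your bound is exactly what is needed rather than more, and the correct dimensions are $\dim N=4n-3$ and $\dim(N\cap\fl)=4n-7$, not $4n-1$ and $4n-6$.
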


\begin{proof}
	The open stratum $M_{(e)}$ is non-singular and carries a symplectic form by the Hamiltonian slice theorem \cite[Theorem 3.2 and Lemma 3.6]{FuLiu2026}. Moreover, we also have
	\[
	\dim M\setminus M_{(e)} \leq \max\{2n-2,4n-8\}.
	\] 
	So $\codim M\setminus M_{(e)}\geq 4$ if $n\geq 3$, which implies that $M$ is a symplectic variety by \cite{Flenner1988} and with terminal singularity by \cite{Namikawa2001a}.
\end{proof}

\begin{rem}
	With a different method, we prove in Corollary \ref{cor.n=2-Msymplectic} that $M$ is also symplectic for $n=2$.
\end{rem}

Let $T=(\bC^*)^r$ be a torus acting on $\bC^n$ with weights $(\bfa_1,\dots,\bfa_n)$. Recall that the associated \emph{hypertoric variety} is the Hamiltonian reduction $T^*\bC^n/\!\!/\!\!/T$. In this case, the moment map can be written explicitly as follows:
	\[
	T^*\bC^n = \bC^n\times (\bC^n)^* \longrightarrow \ft\cong \bC^r,\quad (\bfx,\bfy)\longmapsto \sum_{i=1}^n \bfa_i x_i y_i.
	\]
    Given $n\geq 2$, we denote by $\cY_n$ the hypertoric variety associated to the $\bC^*$-action on $\bC^n$ with weights $(1,-1,(-2)^{n-2})$.

\begin{prop}
	\label{prop.Sing-M(C*)}
	Let $o\not=z\in M_{(\bC^*)}$ be a point. Then the germ of $M$ at $z$ is analytically isomorphic to the germ of $\cY_n\times \bC^{2n-2}$ at the origin.
\end{prop}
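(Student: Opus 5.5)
We want the local structure of $M=N/\!/\bC^*$ near a point $z=\pi_N(\xi)$ with $\xi\in N_{(\bC^*)}\setminus\{0\}$. The plan is to use the Hamiltonian (symplectic) slice theorem of \cite{FuLiu2026} at the smooth point $\xi\in\bfO_n$; note that $\xi\neq 0$ lies in $\bfO_n$. That theorem says the germ of $\overline{\bfO}_n/\!\!/\!\!/\bC^*$ at $z$ is analytically isomorphic to the germ at the origin of
\[
\bigl(V/\!\!/\!\!/G_\xi\bigr)\times \bC^{2k},
\]
with the following data. First, $G_\xi=\bC^*$ is the stabilizer. Second, $V=(T_\xi O_\xi)^{\perp}/T_\xi O_\xi$ is the symplectic slice. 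Third, $2k$ is the dimension of the trivial factor, which arises from the stratum $M_{(\bC^*)}$ through $z$.

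Since $\bC^*$ fixes $\xi$, the orbit $O_\xi$ is the point $\{\xi\}$. Hence the slice is all of $T_\xi\bfO_n$, a symplectic $\bC^*$-representation of dimension $4n-2$. Its fixed part $(T_\xi\bfO_n)^{\bC^*}$ is tangent to the fixed locus $\bfO_n\cap(\fz(\fl)\oplus[\fl,\fl]_0)$. By Proposition \ref{prop.Orbits-N} and Lemma \ref{lem.Eq-N}, near $\xi$ this locus is the smooth $(2n-1)$-dimensional set $\bfO_n\cap\fl_0$, which is the minimal orbit of $\fg\fl_n$ rescaled. Its tangent space is symplectic of dimension $2n-2$ together with the $w$-direction.

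I intend to be more careful here. The reduction must also absorb the moment-map level $\operatorname{Tr}(A)=0$. The cleanest route is to compute the weights directly and then read off the answer. Write $T_\xi\bfO_n=[\mathfrak{so}_{2n+2},\xi]$ and decompose it by the weight grading \eqref{eq.Weight-decomp}. The trivial factor is the symplectic complement of the moving part inside the weight-$0$ part. It consists of the tangent directions of $N_{(\bC^*)}\cong\overline{\bO}_n$ at the point $A$, which has dimension $2n-2$. The remaining weight-$0$ directions form a $2$-dimensional symplectic space, and the pairing $(\operatorname{Tr}A,\text{its dual})$ accounts for it.

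The key step is to compute the nonzero weights on the $4n-4$ remaining dimensions and check that they are $\pm1$ once and $\pm2$ exactly $n-2$ times each. A weight-$w$ space is paired with a weight-$(-w)$ space, so this means a Lagrangian piece with weights $(1,-1,(-2)^{n-2})$ in the convention of $\cY_n$. The weight-$0$ pair (trace direction and $w$-direction) contributes a $T^*\bC$ with weight $0$. It does not enter the moment map $\sum a_ix_iy_i$ and so survives as a $\bC^2$ factor. Together with the $\bC^{2n-4}$ tangent to the stratum inside $\fl_0$ this gives $\bC^{2n-2}$. I will verify these counts at a convenient point: by $\operatorname{GL}_n$-equivariance all nonzero points of $N_{(\bC^*)}$ are conjugate, so I may take $A=E_{12}$. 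At $\xi=E_{12}$ (placed in the $A$-block) I will compute $[\mathfrak{so}_{2n+2},\xi]$ blockwise.

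\begin{itemize}
\item From $\fu^\pm$: the $\bfx,\bfy,\bfu,\bfv$ blocks each give one line, of weights $1,-1,-1,1$. These are two symplectic pairs of weights $\pm1$. One pair is tangent to the orbit through the complementary directions, or is killed by $\operatorname{Tr}$; I will check which.
\item From the $B$ and $C$ blocks: $[B,E_{12}]$ has dimension about $n-2$ with weight $2$, and $C$ similarly has weight $-2$.
\end{itemize}

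The step I expect to be the main obstacle is the bookkeeping. I must show that exactly one $\pm1$ pair and $n-2$ copies of the $\pm2$ pair survive as the non-trivial slice, and that the count of trivial directions is $2n-2$. The total must come out right: since $\dim\cY_n=2n-2$, we need $(2n-2)+(2n-2)=\dim M=4n-4$, which is consistent.
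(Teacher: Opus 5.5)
Your strategy is the right one and is essentially the paper's. Since $O_\xi=\{\xi\}$, the Hamiltonian slice theorem identifies the germ of $M$ at $z$ with the germ of $V/\!\!/\!\!/\bC^*$ at $0$, where $V=T_\xi\bfO_n$. The paper gets the same slice representation from an explicit equivariant chart $\overline{\bfO}_n\cap\{a_{ij}\neq 0\}\subset T^*\bC^{2n-1}$. Your blockwise computation of $[\mathfrak{so}_{2n+2},\xi]$ at $A=E_{12}$ would be a cleaner way to obtain it. (The trivial factor is $V^{\bC^*}\subset V$; once you take $V=T_\xi\bfO_n$ you must not add a further $\bC^{2k}$.) However, the weight bookkeeping you commit to is wrong. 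The key step as you state it cannot be carried out because it is false: exactly one $\pm1$ pair surviving among $4n-4$ moving dimensions, with trivial part $\bC^{2n-4}\oplus\bC^2$ containing a trace/$w$ pair.

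First, the fixed locus $\overline{\bfO}_n\cap(\fz(\fl)\oplus[\fl,\fl]_0)$ is $\overline{\bO}_n$, of dimension $2n-2$, not $2n-1$. Indeed, for $\xi=\mathrm{diag}(w,A,-w,-A^t)$ the condition $\xi^2=0$ forces $w=0$. Accordingly $V^{\bC^*}=[\fz(\fl)\oplus[\fl,\fl]_0,\xi]=[\mathfrak{gl}_n,A]$ is $(2n-2)$-dimensional and traceless, because $[\fz(\fl),\xi]=0$. There is no trace direction to pair off: $d\mu_\xi=0$ at a fixed point, so $\operatorname{Tr}(A)$ vanishes on $V$, and the condition $\mu=0$ enters only through the quadratic moment map of $V$.

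Second, the moving part $V'$ therefore has dimension $(4n-2)-(2n-2)=2n$. The $\bfx,\bfv$ blocks give one line each of weight $1$, the $\bfy,\bfu$ blocks one line each of weight $-1$, and $B$, $C$ give $n-2$ dimensions each of weights $2$ and $-2$. Both $\pm1$ pairs must be kept: nothing is tangent to the point-orbit, and nothing is ``killed by $\operatorname{Tr}$''. This is exactly what $\cY_n$ requires, because $T^*\bC^n$ with weights $(1,-1,(-2)^{n-2})$ on $\bC^n$ carries each of the weights $\pm1$ twice. Both pairs are visible in the local moment-map equation $x_iu_j-v_iy_j-2\sum_k b_{ik}c_{jk}=0$. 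Dropping one pair would give a reduction of dimension $2n-4$, and the total would fall short of $\dim M=4n-4$.

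With the correct counts the argument closes. Write $V_k$ for the weight-$k$ subspace and pick a line $\ell\subset V_1$. Then $\ell\oplus(\ell^\perp\cap V_{-1})\oplus V_{-2}$ is an invariant Lagrangian of $V'$ with weights $(1,-1,(-2)^{n-2})$. Hence $V\cong V^{\bC^*}\times T^*\bC^n$ equivariantly, and $V/\!\!/\!\!/\bC^*\cong\bC^{2n-2}\times\cY_n$, as claimed.
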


\begin{proof}
	Let $0\not=\xi\in N_{(\bC^*)}$ be the unique $\bC^*$-fixed point such that $\pi_N(\xi)=z$. Then $0\not=\xi\in[\fl,\fl]_0$. In particular, we may assume that $a_{ij}\not=0$ in $\xi$ for some $1\leq i\not=j\leq n$. Denote by $U$ the open subset $\overline{\bfO}_n\cap\{a_{ij}\not=0\}$. According to \S\,\ref{ss.prop.Sing-M(C*)}, the variety $U$ is isomorphic to the open subset of $\bC^{4n-2}\coloneqq \left\{(\widehat{\bfr}_{i+1},\widehat{\bfr}_{n+j+2})\right\}$
	defined by $a_{ij}\not=0$, where 
	\[
	\hat{\bfr}_{i+1}\coloneq (x_i,a_{i1},\dots,\hat{a}_{ii},\dots,a_{in},v_i,b_{i1},\dots,\hat{b}_{ii},\dots,\hat{b}_{ij},\dots,b_{in}) \in \bC^{2n-1}
	\]
	and
	\[
	\hat{\bfr}_{n+j+2} = (y_j,c_{j1},\dots,\hat{c}_{ji},\dots,\hat{c}_{jj},c_{jn},u_j,a_{1j},\dots,\hat{a}_{ij},\dots,a_{nj})\in \bC^{2n-1}.
	\]
    Moreover, this isomorphism is $\bC^*$-equivariant with respect to the natural $\bC^*$-action on $\bC^{4n-2}$, and the equation $-a_{ij}\operatorname{Tr}(A)=0$ can be rewritten with these new coordinates as follows:
	\begin{equation}
		\label{eq.Trace-Moment-map-0}
		x_i u_i - v_i y_i - 2\sum_{k=1}^n b_{ik} c_{jk} = 0.
	\end{equation}
    In particular, we can identify $\bC^{4n-2}$ with $T^*\bC^{2n-1}$ such that the $\bC^*$-action on it is induced by a $\bC^*$-action on $\bC^{2n-1}$ with weights 
    \[
    (1,-1,(-2)^{n-2},0^{n-1}),
    \]
    and the equation \eqref{eq.Trace-Moment-map-0} is given by the associated moment map. Then the associated hypertoric variety is isomorphic to $\cY_n\times \bC^{2n-2}$ and the Hamiltonian slice theorem \cite[Theorem 3.2]{FuLiu2026} implies that the germ of $M$ at $z$ is analytically isomorphic to the germ of $\cY_n\times \bC^{2n-2}$ at some point $(0,z')$ and hence at the origin.
\end{proof}

\begin{prop}
\label{prop.Sing-MZ2}
	For $n\geq 4$ and $z\in M_{(\bZ_2)}$, the germ of $M$ at $z$ is analytically isomorphic to the germ of $\bC^{4n-8}\times \bC^4/\langle \pm 1\rangle$ at the origin. In particular, the variety $M_{(\bZ_2)}$ is smooth.
\end{prop}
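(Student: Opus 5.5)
We argue as in Proposition~\ref{prop.Sing-M(C*)}, now with a closed orbit whose stabilizer is $\bZ_2$. Let $z\in M_{(\bZ_2)}$ and let $\xi\in N_{(\bZ_2)}$ be a point with closed orbit $O_\xi$ and $\pi_N(\xi)=z$. By Proposition~\ref{prop.Orbits-N}, $\xi\in N\cap[\fl,\fl]$ and $G_\xi=\bZ_2=\{\pm1\}$. Since $O_\xi$ is closed and not fixed, both $B$ and $C$ are non-zero: otherwise the orbit degenerates to $0$ or to a point of $[\fl,\fl]_0$. So, exactly as in the proof of Lemma~\ref{lem.Orbits-Z2}, we may assume $b_{ij}=-b_{ji}\neq0$ for some $i\neq j$. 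Set $U\coloneqq\overline{\bfO}_n\cap\{b_{ij}\neq0\}$.

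\textbf{Step 1: coordinates on $U$.} Repeating the row-reduction computation (the one deferred to the appendix), $U$ is identified $\bC^*$-equivariantly with an open subset of an affine space. The coordinates are the free entries of the two rows $\bfr_{i+1},\bfr_{j+1}$ of $\xi$ that span the row space, after solving the quadratic relations $\bfr_{i+1}\bfl_{\ast}=0$ for $b_{ij}$-linear variables. Concretely, the free coordinates are
\begin{itemize}
\item $x_i,x_j$ (weight $1$), $v_i,v_j$ (weight $1$);
\item the $a_{ks}$ entries (weight $0$);
\item the $b_{ks}$ entries (weight $2$).
\end{itemize}
The count should give $\dim\overline{\bfO}_n=4n-2$ coordinates, and the remaining entries ($\bfy,\bfu,C$, etc.) are rational expressions over $b_{ij}$. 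Since $b_{ij}$ has weight $2$, a slice to the orbit is cut out by fixing $b_{ij}$, and the stabilizer $\bZ_2$ acts on the slice through the weights mod $2$. It therefore acts by $-1$ exactly on the four odd-weight coordinates $x_i,x_j,v_i,v_j$ and trivially on the rest. After the substitution, the moment map equation $b_{ij}\operatorname{Tr}(A)=0$ should become, as in \eqref{eq.Trace-Moment-map-0}, a function that is linear in one even-weight coordinate transverse to the rest, plus terms quadratic in the odd variables.

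\textbf{Step 2: Hamiltonian slice theorem.} Apply \cite[Theorem~3.2]{FuLiu2026}. The germ of $M$ at $z$ is the germ at $0$ of $(\mathrm{symplectic\ slice})/G_\xi$. The symplectic slice is $(T_\xi O_\xi)^{\omega}/T_\xi O_\xi$, of dimension $(4n-2)-2=4n-4$, and $\bZ_2$ acts on it. By Step~1, this $\bZ_2$-representation is $-1$ on a $4$-dimensional subspace and trivial on a complement of dimension $4n-8$, since the two weight-$1$ pairs survive. Hence the germ is that of $\bC^{4n-8}\times\bC^4/\langle\pm1\rangle$. This isotypic count also follows more invariantly from the weight decomposition \eqref{eq.Weight-decomp}: odd weights occur only in the $\fu^\pm$ blocks, and at $\xi\in[\fl,\fl]$ the tangent space contributions from $[\fu^\pm,\xi]$ give the odd part.

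\textbf{Step 3: smoothness of $M_{(\bZ_2)}$.} Since $M_{(\bZ_2)}$ corresponds locally to the fixed locus $\bC^{4n-8}\times\{0\}$, which is smooth, $M_{(\bZ_2)}$ is smooth.

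\textbf{Main obstacle.} The main difficulty is verifying that the odd-weight part of the symplectic slice is exactly $4$-dimensional. Concretely, we must show $\dim[\fu^-,\xi]+\dim[\fu^+,\xi]$ (modulo the $\omega$-orthogonality correction) equals $4$ for $\xi\in N_{(\bZ_2)}$, which requires $\xi$ to have rank $2$ on the relevant blocks. We would first try this with the explicit parametrization of Step~1, checking that exactly $x_i,x_j,v_i,v_j$ remain free odd coordinates.
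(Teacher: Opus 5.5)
Your proposal is correct and is essentially the paper's argument: it uses the same chart on $\{b_{ij}\neq 0\}$ via the rows $\bfr_{i+1},\bfr_{j+1}$, and the same parity-of-weights observation that $-1\in\bC^*$ acts by $-1$ exactly on $x_i,x_j,v_i,v_j$ and trivially on the remaining $4n-8$ slice directions. The only difference is packaging: the paper applies Luna's slice theorem on the smooth set $N\cap\{b_{ij}\neq 0\}$, an open subset of $\bC^{4n-3}$, with slice $\{b_{ij}=0\}$, whereas you use the symplectic slice $(T_\xi O_\xi)^{\omega}/T_\xi O_\xi$; this is the same $\bZ_2$-representation because $(T_\xi O_\xi)^{\omega}=\ker d\mu_\xi=T_\xi N$, and since the orbit direction and $d\mu_\xi$ are $\bZ_2$-invariant, no odd direction is lost, which resolves your ``main obstacle''.
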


\begin{proof}
	Let $\xi\in N_{(\bZ_2)}$ with a closed $\bC^*$-orbit such that $\pi_N(\xi)=z$. Then one of $B$ and $C$ in $\xi$ is non-zero. Without loss of generality, we may assume that $B\not=0$ such that $b_{ij}=-b_{ji}\not=0$ for some $1\leq i\not=j\leq n$. Denote by $U_N$ the open subset $N\cap \{b_{ij}=-b_{ji}\not=0\}$ of $N$. According to \S\,\ref{ss.prop.Sing-MZ2}, the variety $U_N$ isomorphic to the open subset of 
	\[
	\bC^{4n-3}\coloneqq \left\{(\hat{\bfr}_{i+1},\hat{\bfr}_{j+1})\in \bC^{2n-1}\times \bC^{2n-1} \mid b_{ij}=-b_{ji}\right\},
	\]
	defined by $b_{ij}=-b_{ji}\not=0$, where 
	\[
	\hat{\bfr}_{i+1}\coloneqq (x_i,a_{i1},\dots,\hat{a}_{ii},\dots,\hat{a}_{ij},\dots a_{in}, -v_i,b_{i1},\dots,\hat{b}_{ii},\dots,b_{in})\in \bC^{2n-1}
	\]
	and 
	\[
	\hat{\bfr}_{j+1}\coloneqq (x_j,a_{j1},\dots,\hat{a}_{ji},\dots,\hat{a}_{jj},\dots,a_{jn},-v_j,b_{j1}\dots,\hat{b}_{jj},\dots,b_{jn})\in \bC^{2n-1}.
	\]
	Moreover, this isomorphism is $\bC^*$-equivariant with respect to the natural $\bC^*$-action on $\bC^{4n-3}$. Consider the natural action of the isotropy subgroup $G_{\xi}=\bZ_2$ on $T_\xi U$. Then one easily checks that the subspace $L$ of $\bC^{4n-2}$ defined by $b_{ij}=-b_{ji}=0$ is a $G_{\xi}$-stable complement of $T_{\xi} O_{\xi}$. Hence Luna's slice theorem (see \cite[Theorem 5.3]{BrionKraftSchwarz2025}) shows that the germ of $M$ at $z$ is analytically isomorphic to the germ of the quotient $L/G_{\xi}\cong \bC^{4n-8}\times \bC^4/\langle \pm 1\rangle$ at the origin, and the germ of $M_{(\bZ_2)}$ at $z$ is analytically isomorphic to $\bC^{4n-8}\times \{0\}$, which is smooth.
\end{proof}

\subsection{$\bQ$-factorial terminalizations} 
\label{ss.Q-fact-term}

Denote by $R$ the ring $\sO(N)$ of regular functions  on $N$ and write $R = \oplus_{d\in \bZ} R_d$ the weight decomposition with respect to the $\bC^*$-action. Then we can construct some $\bQ$-factorial terminalizations of $M$ via the variations of GIT. More precisely, we define 
\[
M^+\coloneqq \Proj\left(\bigoplus_{d\geq 0} R_d\right) \quad \textup{and} \quad M^-\coloneqq \Proj\left(\bigoplus_{d\geq 0} R_{-d}\right).
\]
Then there exist natural projective morphisms
\[
\mu_+\colon M^+\longrightarrow M\quad \textup{and}\quad \mu_-\colon M^-\longrightarrow M.
\]
By symmetry, we treat only $M^+$ in detail; the case of $M^-$ follows analogously. Recall that a point $\xi\in N$ is called \emph{semi-stable} (in the sense of GIT) if there exist an integer $d> 0$ and $f\in R_d$ with $f(\xi)\not=0$. It is called \emph{stable} if it is semi-stable with finite stabilizer, and the $\bC^*$-orbits in $N\setminus\{f=0\}$ are closed. Then one can easily derive that the semi-stable locus $N^{\textup{ss}}$ coincides with the stable locus $N^{\textup{s}}$ and is given by the open subset
\[
N\setminus \left(N\cap (\mathfrak{so_{2n+2}})_{\leq 0}
\right),
\]
where $(\mathfrak{so}_{2n+2})_{\leq 0}$ corresponds to the subspace of $\mathfrak{so}_{2n+2}$ with non-positive weights. Hence, the semi-stable locus $N^{\textup{ss}}$ is contained in $N_{\reg}$. Let us denote by $[\xi]$ the image of $\xi \in N^{\textup{ss}}$ in $M^+$. Then Luna's slice theorem implies that $M^+$ is smooth at $[\xi]$ if $\xi \in N^{\textup{ss}}\setminus [\fl,\fl]_2$. Moreover, for a point $\xi\in N^{\textup{ss}}\cap [\fl,\fl]_2$, by the proof of Proposition \ref{prop.Sing-MZ2}, Luna's slice theorem then implies that the germ of $M^+$ at $[\xi]$ is analytically isomorphic to $\bC^4/\langle \pm 1\rangle \times \bC^{4n-8}$. Consequently, we obtain:

\begin{lem}
	The varieties $M^+$ and $M^-$ are singular with  $\bQ$-factorial terminal singularities.
\end{lem}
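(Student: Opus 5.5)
The plan is to read off all three properties (singular, $\bQ$-factorial, terminal) from the local description of $M^+$ obtained just above via Luna's slice theorem. By symmetry I treat only $M^+$; for $M^-$ one swaps the roles of positive and negative weights, so that $[\fl,\fl]_{-2}$ (the $C$-block) replaces $[\fl,\fl]_2$.

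\emph{Global setup.} Since $N^{\textup{ss}}=N^{\textup{s}}$, the morphism $N^{\textup{ss}}\to M^+$ is a geometric quotient with finite stabilizers. Because $N^{\textup{ss}}\subset N_{\reg}$ is smooth, $M^+$ is normal. I will also check that it is irreducible of dimension $4n-4$.

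\emph{Local models.} By the discussion preceding the statement, every point $[\xi]\in M^+$ falls into one of two cases:
\begin{itemize}
\item if $\xi\in N^{\textup{ss}}\setminus[\fl,\fl]_2$, then $M^+$ is smooth at $[\xi]$;
\item if $\xi\in N^{\textup{ss}}\cap[\fl,\fl]_2$, then the germ of $M^+$ at $[\xi]$ is analytically isomorphic to the germ of $\bC^4/\langle\pm1\rangle\times\bC^{4n-8}$ at the origin.
\end{itemize}
In the second case the closedness of the orbit is automatic in $N^{\textup{ss}}$, so the slice computation of Proposition~\ref{prop.Sing-MZ2} goes through without the assumption $n\geq 4$. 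That computation only uses the chart $b_{ij}\neq0$ and the fact that the stabilizer is $\bZ_2$, acting by $-1$ on exactly the four odd-weight directions.

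\emph{Singularity.} It suffices to exhibit a point of $N^{\textup{ss}}\cap[\fl,\fl]_2$. Take $\xi$ with only the block $B$ nonzero, with $B$ skew-symmetric of rank $2$; this is possible since $n\geq2$. Then:
\begin{itemize}
\item $\xi^2=0$, since $\left(\begin{smallmatrix}0&B\\0&0\end{smallmatrix}\right)^2=0$;
\item $\rank\xi=2$;
\item $\operatorname{Tr}A=0$.
\end{itemize}
Hence $\xi\in N$. It has a nonzero component of positive weight, so it is semi-stable, and its stabilizer is $\{\pm1\}$ since $\xi$ has pure weight $2$. Moreover $\bC^4/\langle\pm1\rangle$ is genuinely singular, because $-1$ has no fixed vectors and is not a pseudo-reflection. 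So $M^+$ is singular at $[\xi]$.

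\emph{$\bQ$-factoriality.} $M^+$ has at worst finite quotient singularities, and these are analytically $\bQ$-factorial. The step I expect to need care is passing from analytic-local to algebraic $\bQ$-factoriality. I would argue directly on the quotient instead. Given a Weil divisor $D$ on $M^+$, pull it back to the smooth variety $N^{\textup{ss}}$, where it is Cartier. Since the stabilizers have order at most $2$, the local equation of the pullback becomes $\bC^*$-invariant after raising it to the power $2$ (or using the norm over the stabilizer). Invariant local equations then descend along the geometric quotient, so $2D$ is Cartier on $M^+$.

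\emph{Terminality.} For $\bC^4/\langle\pm1\rangle$ the singular locus has codimension $4$. The Reid--Tai age of $-1$ on $\bC^4$ is $4\cdot\tfrac12=2\geq1$, and there are no pseudo-reflections, so the quotient is terminal. Alternatively, it is a symplectic singularity with singular locus of codimension $4$, hence terminal by \cite{Namikawa2001a}. Terminality is an analytic-local property, so the local models show that $M^+$ has terminal singularities.
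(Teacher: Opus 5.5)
Your proposal is correct in substance and follows the paper's own argument: both deduce that $M^\pm$ is singular, $\bQ$-factorial and terminal from the Luna-slice local models (smooth at free orbits, and $\bC^4/\langle\pm1\rangle\times\bC^{4n-8}$ via the chart $b_{ij}\neq0$ at orbits with stabilizer $\bZ_2$), and you make explicit what the paper leaves implicit. Three small fixes are needed. First, your first bullet, taken from the paper's wording, is false as stated: every $\xi\in N^{\textup{ss}}\cap[\fl,\fl]$ (not only $N^{\textup{ss}}\cap[\fl,\fl]_2$) has stabilizer $\bZ_2$ and gives a singular point of $M^+$, for instance, when $n\geq4$, the points over $M_{(\bZ_2)}$, where $C\neq0$ and $\mu_+$ is an isomorphism; so the case split should be by stabilizer, which your own remark on the chart $b_{ij}\neq0$ already handles. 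Second, in the descent step a semi-invariant local equation $f$ of weight $k$ becomes invariant only after dividing $f^2$ by the unit $b_{ij}^{k}$ (this is Kempf's descent lemma). Third, the Reid--Tai criterion for terminality is age $>1$ rather than $\geq1$, which the age $2$ still satisfies.
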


For a point $\xi \in N^{\textup{ss}}$, the image $\mu_+([\xi])$ is equal to $[O_{\xi}]\in M$ if the orbit $O_{\xi}=G\cdot \xi$ is closed in $N$, and is equal to the limit
\[
\left[\lim_{\lambda\to 0} \lambda \cdot \xi\right] \in M_{(\bC^*)}
\]
if $O_{\xi}$ is not closed in $N$. So the morphism $\mu_+$ is a birational projective morphism, which is an isomorphism outside $M_{(\bC^*)}$.

\begin{lem}
	\label{lem.fibers-mu+}
	Let $z\in M_{(\bC^*)}$ be a point with a representative $\xi_z\in N_{(\bC^*)}$.
	\begin{enumerate}
		\item\label{i1.Fiber-mu_+} If $z\not=o$, then the fiber $\mu_+^{-1}(z)$ is isomorphic to the weighted projective space $\bP(1^2,2^{n-2})$.
		
		\item\label{i2.Fiber-mu_+} The central fiber $\mu_+^{-1}(o)$ consists of two irreducible components with dimension $2n-2$.
	\end{enumerate}
	In particular, the morphism $\mu_+$ is a small contraction if $n\geq 3$ and is a divisorial contraction with an irreducible exceptional divisor if $n=2$.
\end{lem}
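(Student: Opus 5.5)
The fibre $\mu_+^{-1}(z)$ is the GIT quotient of the semistable part of the attracting set of $\xi_z$, namely
\[
S_z\coloneqq\{\xi\in N^{\textup{ss}} \mid \lim_{\lambda\to 0}\lambda\cdot\xi\in O_{\xi_z}\},
\]
by $\bC^*$; we compute this quotient in local coordinates around $\xi_z$.

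\textbf{Case $z\neq o$.} Here $\xi_z\in[\fl,\fl]_0$ is a nonzero fixed point. Following the proof of Proposition~\ref{prop.Sing-M(C*)}, we work in the $\bC^*$-equivariant chart $U=\overline{\bfO}_n\cap\{a_{ij}\neq 0\}$. This chart is identified with an open subset of $T^*\bC^{2n-1}$, where the weights on $\bC^{2n-1}$ are $(1,-1,(-2)^{n-2},0^{n-1})$ and $N\cap U$ is cut out by the moment-map equation \eqref{eq.Trace-Moment-map-0}. Since the attracting set is $\bC^*$-stable and $\mu_+$ is local over $M$, we may replace $N$ by this Hamiltonian model. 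The fibre then becomes the fibre over the origin of the $+$-variation of the hypertoric variety $\cY_n$, multiplied by the fixed factor $\bC^{2n-2}$.

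In the model, the points flowing to the origin as $\lambda\to 0$ are exactly those whose nonzero coordinates all have positive weight. The positive-weight coordinates are $x_i$ (weight $1$), $v_i$ (weight $1$), and the $n-2$ coordinates $c_{jk}$ (weight $2$). The pairing \eqref{eq.Trace-Moment-map-0} only pairs positive weights with negative ones, so the equation holds automatically on this positive part. Removing the origin (the unstable point) and dividing by $\bC^*$ gives
\[
(\bC^{2}\oplus\bC^{n-2})\setminus\{0\}\big/\bC^* \;=\; \bP(1^2,2^{n-2}).
\]
Before concluding, we will check two things: that the Hamiltonian slice isomorphism of \cite[Theorem 3.2]{FuLiu2026} is compatible with the $\Proj$ constructions, and that this fibre lies in $N^{\textup{ss}}$, so that it maps isomorphically.

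\textbf{Case $z=o$.} The fibre is $\bigl(N\cap(\mathfrak{so}_{2n+2})_{>0}\bigr)\setminus\{0\}$ modulo $\bC^*$. The positive-weight part consists of $\bfx$, $\bfv$ (weight $1$) and $B$ (weight $2$). We impose the equations \eqref{eq.N} with $\bfy=\bfu=0$, $A=0$, $C=0$, $w=0$. The condition $\xi^2=0$ then reduces to identities of the shape
\[
\bfx^t\bfv\text{-type relations and } B\bfx=0,\; B\bfv=0,\ \ldots,
\]
together with the rank condition $\rank\le 2$.

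We expect two irreducible components: one where $B$ has rank $2$ with $\bfx,\bfv$ constrained by $B$ (essentially a cone over a Grassmannian-type locus), and one where $B$ is determined by $\bfx\wedge\bfv$ (the locus with $\bfx\neq0$). We will compute the dimension of each: the cone has dimension $2n-1$, so its projectivization has dimension $2n-2$. This dimension count and the proof of irreducibility of each component are the main obstacle. I would carry them out with the row-vector parametrization used in Lemma~\ref{lem.Orbits-Z2}, splitting into the cases $b_{ij}\neq0$ and $B=0$.

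\textbf{Small versus divisorial.} We compare dimensions. The source $M_{(\bC^*)}\setminus\{o\}$ has dimension $2n-2$ and the general fibre has dimension $n-1$, so the exceptional locus over it has dimension $3n-3$. The central fibre has dimension $2n-2$. Since $\dim M=4n-4$, the exceptional locus has codimension $n-1$, which is at least $2$ exactly when $n\ge3$. For $n=2$ it is a divisor, irreducible because $M_{(\bC^*)}$ is irreducible and the general fibre is $\bP^1$.
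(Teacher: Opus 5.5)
The gap is in part (2). Your part (1) follows the paper: you restrict to the invariant chart $\{a_{ij}\neq 0\}$ of Proposition~\ref{prop.Sing-M(C*)} and read off the positive-weight coordinates. Two small corrections. First, the weight-$2$ coordinates are the $b_{ik}$ ($k\neq i,j$), not the $c_{jk}$, which have weight $-2$. Second, no compatibility check with the analytic slice theorem is needed. The chart isomorphism is algebraic and $\bC^*$-equivariant, and since $a_{ij}$ is invariant, $\{a_{ij}\neq0\}$ is the preimage of an open subset of $M$, so the fibre can be computed inside it. Your closing count for small versus divisorial is fine.

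In part (2) you leave the claim as an expectation, and the expected structure is wrong. On the positive-weight locus ($w=0$, $A=0$, $\bfy=\bfu=0$, $C=0$), multiplying out the blocks shows that the only nonzero block of $\xi^2$ is $\bfx^t\bfv+\bfv^t\bfx$. The relations $B\bfx^t=0$ and $B\bfv^t=0$ you list are not consequences of $\xi^2=0$. The vanishing of this symmetric product forces $\bfx=0$ or $\bfv=0$. Hence the attracting set of $0$ is $\{\bfv=0\}\cup\{\bfx=0\}$, which are the paper's $U_{\bfx}$ and $U_{\bfv}$.

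Your proposed pieces do not match this. The ``rank-$2$ $B$'' locus is dense in both true components, so it is not irreducible. The piece ``$B$ determined by $\bfx\wedge\bfv$'' is vacuous, since $\bfx\wedge\bfv=0$ identically there.

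What remains is the rank condition. On $\{\bfv=0\}$ all nonzero entries of $\xi$ lie in rows $2,\dots,n+2$ and columns $1,n+3,\dots,2n+2$. After moving column $1$ to the end, this block is the skew matrix
\[
\begin{pmatrix} B & \bfx^t \\ -\bfx & 0 \end{pmatrix}\in\wedge^2\bC^{n+1}.
\]
So $\rank\xi\le 2$ means this bivector is decomposable, and the piece is the affine cone over $G(2,n+1)$, irreducible of dimension $2n-1$. The same holds on $\{\bfx=0\}$, using the block $\begin{pmatrix} 0 & \bfv\\ -\bfv^t & B\end{pmatrix}$ in rows $1,\dots,n+1$ and columns $n+2,\dots,2n+2$. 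The two cones meet in $\{\bfx=\bfv=0,\ \rank B\le 2\}$, which has dimension $2n-3$, so they are distinct. Removing $0$ and dividing by $\bC^*$ then gives exactly two components of dimension $2n-2$.

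The paper reaches the same conclusion using the charts $x_i\neq0$, $v_i\neq0$ and $b_{ij}\neq0$. In the last chart the equations $x_iv_i=x_jv_i+x_iv_j=x_jv_j=0$ split as $x_i=x_j=0$ or $v_i=v_j=0$. Your planned case split into $b_{ij}\neq0$ and $B=0$ would have to uncover exactly this dichotomy.
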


\begin{proof}
	For \ref{i1.Fiber-mu_+}, without loss of generality, we assume that $\xi_z\in N_{(\bC^*)}$ with $a_{ij}\not=0$. As in the proof of Proposition \ref{prop.Sing-M(C*)}, one can show that the subvariety of $N^{\textup{ss}}$ consisting of points $\xi \in N^{\textup{ss}}$ such that $\mu_+([\xi])=z$ is isomorphic to the following vector space with the origin removed:
	\[
	(x_i,v_i,b_{i1},\dots,\hat{b}_{ii},\dots,\hat{b}_{ij},\dots,b_{in})\in \bC^{2n},
	\]
	where $\bC^*$ acts with weights $(1^2,2^{n-2})$, as required.
	
	For \ref{i2.Fiber-mu_+}, let us denote by $U$ the subvariety consisting of points $\xi \in N^{\textup{ss}}$ such that $[\xi]\in \mu^{-1}_+(o)$. Then a point $\xi \in N^{\textup{ss}}$ is contained in $U$ if and only if it is of the following form:
	\begin{equation}
		\label{eq.Xi-centralfiber}
		\xi\coloneqq \begin{pmatrix}
			0       &  0    &   0       &     \bfv      \\
			\bfx^t  &   0      & -\bfv^t   &      B       \\
			0       & 0     & 0        &      -\bfx    \\
			0 & 0        & 0   &      0    
		\end{pmatrix}\not=0,\quad \xi^2=0\quad \textup{and}\quad \rank(\xi)\leq 2.
	\end{equation}
	According to \S\,\ref{ss.fibers-mu+}, the variety $U$ consists of  two irreducible components with dimension $2n-1$, which then correspond to the two irreducible components of $\mu_+^{-1}(o)$ with dimension $2n-2$.
\end{proof}

\begin{cor}
	\label{cor.n=2-Msymplectic}
	If $n=2$, the Hamiltonian reduction $M$ is symplectic.
\end{cor}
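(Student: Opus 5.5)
For $n=2$ the codimension argument used for $n\geq 3$ fails, because $\dim M = 4$ and $M_{(\bC^*)}\cong \overline{\bO}_2$ has dimension $2$. Instead, I would argue through the resolution-like morphism $\mu_+\colon M^+\to M$. The property that needs to be shown is that $M$ is normal, has a symplectic form on its regular part, and that this form extends holomorphically over some resolution. By Lemma~\ref{lem.Orbits-Z2}, for $n=2$ there are no closed orbits in $N_{(\bZ_2)}$. Hence $M=M_{(e)}\sqcup M_{(\bC^*)}$, and $M_{(e)}$ is smooth and symplectic by the Hamiltonian slice theorem \cite[Theorem 3.2 and Lemma 3.6]{FuLiu2026}. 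Normality of $M$ is automatic, since $N$ is normal by Lemma~\ref{lem.Eq-N} and invariants of a normal ring are normal.

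First I would understand $M^+$ for $n=2$. Its semi-stable points meeting $[\fl,\fl]_2$ are the loci giving $\bC^4/\langle\pm1\rangle\times\bC^{4n-8}$ singularities, and for $n=2$ this means $\bC^4$ with no extra factor. I would check that for $n=2$ the set $N^{\textup{ss}}\cap[\fl,\fl]_2$ has stabilizer $\bZ_2$ only in a way compatible with the hyperplane $\operatorname{Tr}A=0$. Where Luna's slice theorem applies, $M^+$ is locally either smooth symplectic (a free quotient by the Hamiltonian slice theorem) or $\bC^4/\langle\pm1\rangle$, which is a symplectic quotient singularity. In either case $M^+$ has symplectic singularities, and the reduction of the KKS form on $N^{\textup{ss}}$ gives a symplectic form on $(M^+)_{\reg}$.

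Next, $\mu_+$ is a projective birational morphism that is an isomorphism over $M\setminus M_{(\bC^*)}\supset M_{(e)}$. So the symplectic form $\omega$ on $M_{\reg}$ pulls back to the form on $(M^+)_{\reg}$ on a dense open set, and hence everywhere on $(M^+)_{\reg}$. Now take a resolution $W\to M^+$. Since $M^+$ has symplectic singularities, the form extends holomorphically to $W$. The composite $W\to M$ is a resolution of $M$, and for any other resolution the extension property transfers, because it is independent of the chosen resolution. This proves that $M$ is symplectic.

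The main obstacle is verifying that $M^+$ really has only symplectic (quotient) singularities when $n=2$, including at the points of the exceptional divisor of Lemma~\ref{lem.fibers-mu+}. These are the points lying over $o$ and over $M_{(\bC^*)}\setminus\{o\}$, where the fibers are $\bP(1^2)=\bP^1$. I would handle this using the stable-equals-semistable description: every point of $N^{\textup{ss}}$ lies in $N_{\reg}$ and has finite stabilizer, either trivial or $\bZ_2$, so Luna's slice gives local models $\bC^4/G_\xi$ with $G_\xi\subset \mathrm{Sp}$. The remaining task is to confirm that the $\bZ_2$ acts symplectically, which it does because it preserves the reduced symplectic form.
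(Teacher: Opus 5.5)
Your argument is correct, but it follows a different route from the paper's.

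\textbf{The paper's route.} It shows that $M$ has rational singularities and then applies Namikawa's criterion \cite[Theorem 6]{Namikawa2001}: a normal variety with rational singularities and a symplectic form on $M_{\reg}=M_{(e)}$ is symplectic. Rationality comes from crepancy. Since $\cY_2$ is an $A_1$-singularity, $\mu_+$ is, transversally along $M_{(\bC^*)}\setminus\{o\}$, the minimal resolution. The exceptional divisor is irreducible, so its discrepancy is $0$, and $\mu_+$ is crepant. A crepant image of the terminal variety $M^+$ is canonical, hence rational.

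\textbf{Your route.} You work directly with Beauville's definition:
\begin{enumerate}
\item[(a)] By the Luna-slice analysis of \S\,\ref{ss.Q-fact-term}, $M^+$ is a symplectic orbifold: free quotients away from one point, which is a $\bC^4/\langle\pm1\rangle$ point.
\item[(b)] So for any resolution $W\to M^+$ the reduced form extends, and $W\to M^+\to M$ is a resolution of $M$.
\item[(c)] The reduced forms on $M^+$ and $M$ agree over $M_{(e)}$, and the extension property does not depend on the chosen resolution. This finishes the proof.
\end{enumerate}

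\textbf{Comparison.} Your route needs no discrepancy computation and never uses the transversal model $\cY_2$ along $M_{(\bC^*)}$. It also avoids canonical $\Rightarrow$ rational and Namikawa's rationality criterion. The paper's route produces crepancy of $\mu_+$ and canonicity of $M$ along the way. These also follow from your conclusion, because $(\omega^+)^{2}=\mu_+^*\omega^{2}$ trivializes $K_{M^+}$.

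\textbf{One point to make explicit.} Beauville's definition requires a symplectic form on all of $M_{\reg}$, not only on $M_{(e)}$. There are two easy ways to get this:
\begin{enumerate}
\item[(i)] Note that $M_{\reg}=M_{(e)}$, since the transversal model along $M_{(\bC^*)}\setminus\{o\}$ is singular.
\item[(ii)] Extend the form on $M_{(e)}$ across the codimension-two complement in $M_{\reg}$ using normality. It stays nondegenerate, because the zero locus of $\omega^{2}$ in $M_{\reg}$ would have to be a divisor, which cannot fit in a codimension-two set.
\end{enumerate}
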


\begin{proof}
	Since $M_{\reg}=M_{(e)}$ carries a symplectic form, it remains to prove that $M$ has rational singularities by \cite[Theorem 6]{Namikawa2001}. Consider the $\bQ$-factorial terminalization $\mu_+\colon M^+\rightarrow M$. Note that $\cY_2$ is exactly the $A_1$-singularity and $\mu_+$ is the minimal resolution of $M$ along $M_{(\bC^*)}\setminus\{o\}$. In particular, the morphism $\mu^+$ is crepant, i.e. $\mu_+^*K_{M}=K_{M^+}$. since $M^+$ has terminal singularities, the variety $M$ has canonical singularities and hence rational singularities.
\end{proof}

\subsection{Conclusion}

We summarize the results obtained in this section in the following theorem.

\begin{thm}
\label{thm.M-Geometry}
    Let $M=M_{(e)}\sqcup M_{(\bZ_2)}\sqcup M_{(\bC^*)}$ be the isotropy type decomposition in \eqref{eq.Iso-Dec-M}. Then the following assertions hold. 
    \begin{enumerate}   
        \item The variety $M$ is a symplectic variety.
        
        \item The variety $M$ has only terminal singularities for $n\geq 3$.
    
       \item $M_{\reg} = M_{(e)}$.
       
       \item  $M_{\sing}=M_{(\bC^*)}$ for $2\leq n\leq 3$, and  $M_{\sing}=\overline{M}_{(\bZ_2)}$ for $n\geq 4$.
        
        \item If $n\geq 2$, the subvariety $M_{(\bC^*)}$ is isomorphic to $\overline{\bO}_n\subset \mathfrak{sl}_n$ and the germ of $M$ along a point $o\not= z\in M_{(\bC^*)}$ is analytically isomorphic to 
        \[
        \cY_n\times \bC^{2n-2}.
        \]

        \item \label{i.sing} If $n\geq 4$, the subvariety $M_{(\bZ_2)}$ is non-singular and irreducible such that the germ of $M$ along a point $z\in M_{(\bZ_2)}$ is analytically isomorphic to 
        \[
        \bC^4/\langle \pm 1\rangle\times \bC^{4n-8}.
        \]
        
        \item The variety $M$ does not admit symplectic resolutions.
    \end{enumerate}
\end{thm}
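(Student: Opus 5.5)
Most items of Theorem~\ref{thm.M-Geometry} collect results already proved in this section, so the plan is to assemble them and then give one new argument for the last item. Item (1) follows from the Proposition that $M$ is symplectic with terminal singularities for $n\geq 3$, together with Corollary~\ref{cor.n=2-Msymplectic} for $n=2$. Item (2) is that same Proposition. Item (5) follows from the stratification \eqref{eq.Iso-Dec-M} and Proposition~\ref{prop.Sing-M(C*)}, using Lemma~\ref{lem.Eq-N} to identify $M_{(\bC^*)}\cong N_{(\bC^*)}\cong\overline{\bO}_n$. Item (6) is Proposition~\ref{prop.Sing-MZ2} combined with the irreducibility statement in Lemma~\ref{lem.Orbits-Z2}: $\overline{M}_{(\bZ_2)}=\pi_N(N\cap[\fl,\fl])$ is the image of an irreducible set.

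For (3) and (4), first note that $M_{(e)}$ is smooth by the Hamiltonian slice theorem. Next we show that every point outside $M_{(e)}$ is singular.
\begin{itemize}
\item At $z\in M_{(\bZ_2)}$, the germ is $\bC^4/\langle\pm1\rangle\times\bC^{4n-8}$, which is singular.
\item At $o\neq z\in M_{(\bC^*)}$, the germ is $\cY_n\times\bC^{2n-2}$. The variety $\cY_n$ is singular at its origin: it is a cone of dimension $2n-2$ whose degree-two invariants, such as $x_1y_1$ and $x_1x_2$, already give an embedding dimension larger than $2n-2$. Alternatively, for $n=2$ it is the $A_1$-singularity.
\item The vertex $o$ lies in the closure of these singular points, and the singular locus is closed, so $o$ is singular.
\end{itemize}
Hence $M_{\sing}=M\setminus M_{(e)}$. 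For $n\leq 3$ the stratum $M_{(\bZ_2)}$ is empty, so $M_{\sing}=M_{(\bC^*)}$. For $n\geq 4$ the description of the strata gives $M\setminus M_{(e)}=\overline{M}_{(\bZ_2)}$.

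The real content is (7). The plan is to argue by contradiction: suppose $f\colon \widetilde M\to M$ is a symplectic resolution, and localize at a general point $z\in M_{(\bC^*)}\setminus\{o\}$. A symplectic resolution restricts to a symplectic resolution of the analytic germ, so the germ $\cY_n\times\bC^{2n-2}$ would admit one, and therefore so would $\cY_n$.

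For $n\geq 3$, the variety $\cY_n$ is a hypertoric variety attached to the weights $(1,-1,(-2)^{n-2})$. Its GIT quotient $\cY_n^+$ is the analogue of $M^+$ in Lemma~\ref{lem.fibers-mu+}. It is a $\bQ$-factorial terminalization with exceptional fibre $\bP(1^2,2^{n-2})$, and it is singular along $\bP(2^{n-2})$ with transversal $\bC^4/\langle\pm1\rangle$-type singularities, matching item (6). Symplectic resolutions, when they exist, are exactly the smooth $\bQ$-factorial terminalizations. Moreover, all $\bQ$-factorial terminalizations share the same singularities analytically locally, because they are connected by flops, or via Namikawa's theorem. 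So one $\bQ$-factorial terminalization being singular forces all of them to be singular, and no resolution can be symplectic.

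A more hands-on alternative handles $n\geq 4$ directly. Use the transversal slice $\bC^4/\langle\pm1\rangle\times\bC^{4n-8}$ from item (6), and note that $\bC^4/\langle\pm1\rangle$ is $\bQ$-factorial and terminal but not smooth. For the hypertoric argument, I would invoke the standard criterion that a hypertoric variety $T^*\bC^m/\!\!/\!\!/T$ admits a symplectic resolution only if its weight matrix is unimodular, which fails for weights $(1,-1,-2,\dots)$ because of the entries $\pm2$.

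For $n=2$ the same argument fails, since $\cY_2$ is the $A_1$-singularity and does admit a resolution. In that case I would examine the vertex $o$ directly, using the fact from Lemma~\ref{lem.fibers-mu+} that $\mu_+$ is divisorial and a crepant resolution along $M_{(\bC^*)}$. The plan is to show that $M^+$ itself is singular at points over $o$ and is the unique $\bQ$-factorial terminalization up to $M^-$. The main obstacle is exactly this uniqueness and singularity check at the vertex for small $n$, so I would try first to reduce it to the local terminalization uniqueness statement.
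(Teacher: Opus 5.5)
\paragraph{Main gap: item (7) for $n=2$.} Items (1)--(6) are assembled correctly from the preceding results. The gap is in item (7): you do not prove it for $n=2$. You leave your plan for that case (singularity of $M^+$ over $o$, plus uniqueness of $\bQ$-factorial terminalizations up to $M^-$) as an open obstacle, so the theorem is not established for all $n\geq 2$.

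The missing idea is that the criterion you already invoke for $\cY_n$ applies verbatim to $M$ itself:
\begin{itemize}
\item $M$ is a conical symplectic variety, since the scaling on $\overline{\bfO}_n$ descends to it.
\item $\mu_+\colon M^+\to M$ is a $\bQ$-factorial terminalization.
\item $M^+$ is singular for every $n\geq 2$. Take $\xi\in N^{\textup{ss}}$ whose only nonzero block is $B$; for example $b_{12}=-b_{21}\neq 0$, which exists already when $n=2$. Its stabilizer is $\bZ_2$, and Luna's slice theorem gives the germ $\bC^4/\langle\pm1\rangle\times\bC^{4n-8}$ at $[\xi]$.
\end{itemize}
By \cite[Corollary~31]{Namikawa2008a}, a conical symplectic variety with a singular $\bQ$-factorial terminalization has no symplectic resolution. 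That is the paper's entire proof of (7). It treats $n=2$, $n=3$ and $n\geq 4$ uniformly, and needs neither the location of the singular points of $M^+$ nor any classification of the $\bQ$-factorial terminalizations.

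\paragraph{Secondary issues.} Your localization route is also incomplete for $n=3$. A symplectic resolution of $M$ only yields an analytic crepant resolution of the germ $(\cY_n\times\bC^{2n-2},0)$. You give no argument for passing from this to a projective symplectic resolution of the algebraic conical variety $\cY_n$. Only the latter is what Namikawa's corollary, or the unimodularity criterion for hypertoric varieties, rules out. Your assertion that all $\bQ$-factorial terminalizations have the same analytic singularities is also stronger than what you need or can cite.

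For $n\geq 4$ your slice argument is sound. $\bC^4/\langle\pm1\rangle$ is terminal and analytically $\bQ$-factorial, so any crepant resolution over that germ is small, hence an isomorphism, which is impossible.

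There is also a small slip in (3)--(4). For $n=3$, the degree-two invariants of $\cY_n$ span, modulo the moment map, a space of dimension $3+(n-2)^2=4=2n-2$, so they do not exceed the dimension. You need the degree-three invariants, such as $x_1^2x_3$, to see that $\cY_3$ is singular at its vertex.
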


\begin{proof}
Note that the $\bQ$-factorial terminalizaton $M^+\rightarrow M$ is not smooth, so $M$ does not admit any symplectic resolutions by \cite[Corollary 31]{Namikawa2008a} and the last statement follows.
\end{proof}

\begin{rem}
For $n \geq 4$,  \ref{i.sing} implies that $M$ has no symplectic resolution, because  $\bC^4/\langle \pm 1\rangle$ has none. For $n=3$,  the non-existence of a symplectic resolution can also be deduced from that of $\mathcal{Y}_3$, while for $n=2$, we must use \cite[Corollary 31]{Namikawa2008a}.
\end{rem}

\section{Proofs of Theorems \ref{t.TOn-symp-res} and \ref{thm.main-thm}} 

Throughout this section, we retain the notation from \S\,\ref{s.HamRed}.	In particular, we denote by $\bfO_n$ and $\bO_n$ the minimal nilpotent orbit in $\mathfrak{so}_{2n+2}$ and $\mathfrak{sl}_n$, respectively, where $n\geq 2$. 

\subsection{A quotient construction}

Recall that if we identify $\mathfrak{sl}_n$ with the space consisting of traceless $n\times n$-matrices, then a $n\times n$-matrix $\theta$ is contained in $\overline{\bO}_n$ if and only if $\rank(\theta)\leq 1$ and $\theta^2=0$. Elements $\bfx=(x_i)\in \bC^n$ will be written as row vectors. Let $Y$ be the quadric cone in $\bC^{2n}$; that is,
	\[
	Y=\{(\bfx,\bfy)\in \bC^n\times \bC^n \mid \bfx \bfy^t =0 \}\subset \bC^n\times \bC^n.
	\]
	Let $\bC^*$ act on $Y$ by $\lambda\cdot (\bfx,\bfy)=(\lambda\bfx,\lambda^{-1}\bfy)$. Then we have
	\begin{equation}
		\label{eq.Quot-Y}
		q_Y\colon Y\longrightarrow Y/\!/\bC^*=\overline{\bO}_n, \quad (\bfx,\bfy)\longmapsto \theta=(x_iy_j)_{1\leq i,j\leq n}.
	\end{equation}
Let $\fp$ be the maximal parabolic subalgebra of $\mathfrak{so}_{2n+2}$ corresponding to the first simple root (in the order of Bourbaki). Write $\fp=\fl\oplus \fu^+$, where $\fu^+$ is the nilpotent radical and $\fl$ is the Levi part. Then we have	\begin{equation}
		\label{eq.dec-so2n+2}
		\mathfrak{so}_{2n+2}=\fu^+\oplus \fl \oplus \fu^-,
	\end{equation}
	where $\fu^-$ is the nilpotent radical of $\fp^-$. Applying Theorem \ref{thm.Quadric} to $(\mathfrak{so}_{2n+2},\fp_1,\bfO_n)$ yields a diagram as follows (see also \cite[Table 2]{FuLiu2025}):
    \[
    \begin{tikzcd}
        \overline{\bfO}_n \arrow[r,"{\pi_+}"] \arrow[d,"{\pi_-}" left]
            & Y^+\subset \fu^+ \\
        Y^-\subset \fu^-
            &
    \end{tikzcd}
    \]
    where 
    \begin{enumerate}
        \item the affine varieties $Y^+\subset \fu^+$ and $Y^-\subset \fu^-$ are isomorphic to $Y\subset \bC^{2n}$ in a natural way such that $Y_{\reg}\cong \bfO_n\cap \fu^+\cong \bfO_n\cap \fu^-$, and 
        \item the open subset $U^+\coloneqq \pi^{-1}_+(Y^+_{\reg})$ (resp. $U^-\coloneqq \pi_-^{-1}(Y^-_{\reg})$) is symplectically isomorphic to $T^*Y^+_{\reg}$ (resp. $T^*Y^-_{\reg}$).
    \end{enumerate}
    The $\bC^*$-action on $Y$ then induces a Hamiltonian $\bC^*$-action on $U^+$ (and also $U^-$), which coincides with the $\bC^*$-action on $\overline{\bfO}_n$ introduced in \S\,\ref{s.HamRed} (see, for instance, \cite[\S\,1]{LevasseurStafford1999}). Let $N\subset \overline{\bfO}_n$ be its shell (cf.~\S\,\ref{ss.Shell}).

\subsection{Fibers of $\pi_+$ in $N$}
     
     Under the isomorphism $U^+\cong T^*Y^+_{\reg}$, for any point $0\not=y\in Y^+$, one easily derives that the fiber of $\pi_+|_N\colon N\rightarrow Y^+$ is isomorphic to a codimension one subspace of $T_y Y^+\cong \bC^{2n-1}$. Denote by $F_+$ the fiber of $\pi_+$ over the vertex $0\in Y^+$. Let $G_+\coloneqq N\cap F_+$. Denote by $p_+$ the restricted map 
     \[
     \pi_-|_{G_+}\colon G_+\longrightarrow Y^-.
     \]
     
     \begin{lem}
     	\label{lem.Fibers-center-fiber}
     	\begin{enumerate}
     		\item The fiber of $p_+$ over $(\bfu,\bfv)\not=0$ with $\bfu=0$ or $\bfv=0$ is isomorphic to a quadric hypersurface in $\bC^{2n-2}$ with one isolated singular point.
     		
     		\item If $n\geq 3$, the fiber of $p_+$ over $(\bfu,\bfv)$ with $\bfu\not=0$ and $\bfv\not=0$ is isomorphic to a quadric hypersurface in $\bC^{2n-3}$ with one dimensional singular locus.
     		
     		\item If $n=2$, the fiber of $p_+$ over $(\bfu,\bfv)$ with $\bfu\not=0$ and $\bfv\not=0$ is isomorphic to $\bC$.
     	\end{enumerate}
     \end{lem}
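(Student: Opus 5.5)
The plan is to reduce to a few normal forms using a symmetry group, and then compute each fiber explicitly in the coordinates of \eqref{eq.Form-xi}.

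\textbf{Setting up.} By definition, $F_+$ consists of the $\xi\in\overline{\bfO}_n$ with $\bfx=\bfy=0$. Hence $G_+$ is cut out inside $\overline{\bfO}_n$ by $\bfx=\bfy=0$ and $\operatorname{Tr}(A)=0$, and $p_+$ sends $\xi$ to $(\bfu,\bfv)\in Y^-$.

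\textbf{Step 1: reduction to normal forms.} Let $\mathrm{GL}_n\subset \mathrm{SO}_{2n}\subset L$ act by $A\mapsto gAg^{-1}$, $B\mapsto gBg^t$, $C\mapsto g^{-t}Cg^{-1}$, together with the induced linear action on $(\bfx,\bfy)$ and $(\bfu,\bfv)$.
\begin{itemize}
\item This group commutes with the $\bC^*$-action of \S\,\ref{s.HamRed}.
\item It preserves $\operatorname{Tr}(A)$, the locus $\bfx=\bfy=0$, and the projection $\pi_-$.
\end{itemize}
So it acts on $G_+$ compatibly with $p_+$. Combined with the scaling of $\overline{\bfO}_n$, it acts on $Y^-\setminus\{0\}=\{(\bfu,\bfv)\neq 0 \mid \bfu\bfv^t=0\}$ with the following orbits:
\begin{itemize}
\item $\{\bfu=0\}$;
\item $\{\bfv=0\}$;
\item $\{\bfu\neq0,\bfv\neq0\}$.
\end{itemize}
It therefore suffices to compute the fiber over three representatives: $(0,e_1)$, $(e_1,0)$, and $(e_1,e_2)$. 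The first two cases are also exchanged by the symmetry $\bfu\leftrightarrow\bfv$ (up to the outer swap of the two $\bC^*$-weights), so only $(0,e_1)$ and $(e_1,e_2)$ need real work.

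\textbf{Step 2: linear-algebra description of the fiber.} Over $y=(\bfu,\bfv)\neq0$, Theorem \ref{thm.Quadric} and Proposition \ref{p.symiso} identify $\pi_-^{-1}(y)$ with $T^*_yY^-\cong\bC^{2n-1}$. Concretely, a point of the fiber has the form
\[
\Ad(\exp(\bfn))\,\zeta=\zeta+[\bfn,\zeta]+\tfrac12[\bfn,[\bfn,\zeta]],\qquad \bfn\in\fu^+,
\]
where $\zeta\in\fu^-$ is the matrix corresponding to $y$. Taking components:
\begin{itemize}
\item The $\fu^+$-component is $\tfrac12[\bfn,[\bfn,\zeta]]$, which is quadratic in the cotangent coordinate $[\bfn,\zeta]\in\fl$. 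So the condition $\bfx=\bfy=0$ defines a quadric cone $Q_y$ inside $T^*_yY^-$.
\item $\operatorname{Tr}(A)$ is the moment map of the Hamiltonian $\bC^*$-action. On $T^*_yY^-$ it is the linear function given by pairing with the generating vector field at $y$, which is nonzero because $y$ is not fixed.
\end{itemize}
Thus $p_+^{-1}(y)=Q_y\cap H_y$ for a hyperplane $H_y$.

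In practice I would verify this directly on the explicit representatives rather than through the abstract bracket formula. For instance, for $\bfv=e_1$, $\bfu=0$, I use that $\bfr_1$ and $\bfl_{n+3}$ are nonzero and that the rank $\le 2$ and $\xi^2=0$ conditions express every row as a combination of two rows, as in the proof of Lemma \ref{lem.Orbits-Z2}. This writes $A,B,C,w$ in terms of $2n-1$ free coordinates, and then I read off the quadratic equation(s) coming from $\bfx=\bfy=0$ and the linear equation $\operatorname{Tr}(A)=0$.

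\textbf{Step 3: rank of the restricted quadric (main obstacle).} The remaining task is to compute the rank of the quadratic form $Q_y$ restricted to $H_y$.
\begin{itemize}
\item For $y=(0,e_1)$ (and symmetrically $(e_1,0)$), I expect $H_y$ to be transversal to the radical of $Q_y$. Then $Q_y\cap H_y$ is a quadric in $\bC^{2n-2}$ with a one-dimensional radical missing, i.e. a nondegenerate quadric cone with isolated singularity.
\item For $y=(e_1,e_2)$, I expect the generating vector field to pair trivially with part of the radical. One coordinate is then eliminated linearly, giving a quadric in $\bC^{2n-3}$ whose radical is one-dimensional, hence with one-dimensional singular locus, as long as $n\ge3$.
\item For $n=2$ in the case $y=(e_1,e_2)$, the same elimination leaves a degenerate quadric of rank $\le 2$ in $\bC^1$ (or a linear space). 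I will check directly that it is $\bC$.
\end{itemize}
The delicate part is tracking exactly which coordinates the trace condition kills in the mixed case $\bfu,\bfv\neq0$. I would first carry out the explicit computation for $n=3$ to fix the pattern, then write the general case, with details deferred to the appendix as in Remark \ref{rm.barNZ2}.
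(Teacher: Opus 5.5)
There is a genuine gap in Step 2, and Step 3 rests on it. The $\fu^+$-component $\tfrac12[\bfn,[\bfn,\zeta]]$ is $\fu^+$-valued, i.e.\ $2n$ quadratic functions on $T^*_yY^-\cong\bC^{2n-1}$. Its zero locus is not a quadric hypersurface but a quadric cut by a hyperplane.

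Concretely, take the chart $u_i\neq 0$, with fiber coordinates $c_{ik}$ ($k\neq i$, $c_{ii}=0$) and $a_{1i},\dots,a_{ni}$. There $\xi^2=0$ and $\rank\xi\le 2$ give
\[
u_i w=-\sum_k (u_k a_{ki}-v_k c_{ik}),\qquad u_i y_i=\sum_k a_{ki}c_{ik}.
\]
Writing $\bfr_{n+2}$ as a combination of $\bfr_1$ and $\bfr_{n+i+2}$ gives $u_i\bfy=y_i\bfu+w\,(c_{i1},\dots,c_{in})$ and $u_i\bfx=w\,(a_{1i},\dots,a_{ni})-y_i\bfv$. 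Hence $\bfx=\bfy=0$ if and only if $w=y_i=0$. So besides the quadric $\sum_k a_{ki}c_{ik}=0$ there is a linear condition $w=0$: the $\fz(\fl)$-component of the cotangent vector, i.e.\ its pairing with the Euler field of $Y^-$. Combined with $u_i\operatorname{Tr}(A)=\sum_k(u_k a_{ki}+v_k c_{ik})$, the fiber of $p_+$ is the quadric $\sum_k a_{ki}c_{ik}=0$ on the linear space $\sum_k u_k a_{ki}=\sum_k v_k c_{ik}=0$. This is exactly the paper's description.

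The dichotomy in the lemma is whether the two hyperplanes $w=0$ and $\operatorname{Tr}(A)=0$ coincide. They are the pairings with the Euler field and with the generator of the $\bC^*$-action, which at $(\bfu,\bfv)$ are $(\bfu,\bfv)$ and $(-\bfu,\bfv)$.
\begin{itemize}
\item If $\bfv=0$ (or symmetrically $\bfu=0$), the two hyperplanes coincide. You get the nondegenerate form $\sum_{k\neq i}a_{ki}c_{ik}$ on $\bC^{2n-2}$.
\item If $\bfu,\bfv\neq 0$, they are independent, since $\bfu\bfv^t=0$ forces $v_k\neq 0$ for some $k\neq i$. On the resulting $\bC^{2n-3}$ the form has the one-dimensional radical $\{c_{ik}=0,\ (a_{1i},\dots,a_{ni})\in\bC\bfv\}$.
\end{itemize}

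Your model cannot reproduce case (2). For a single quadric $Q_y\subset\bC^{2n-1}$ and a single hyperplane $H_y$, $Q_y\cap H_y$ always has dimension at least $2n-3$. The correct fiber has dimension $2n-4$, and this is what gives $\dim G_+=4n-5$ in Corollary \ref{cor.dim-G+}, hence the codimension-two statement in the proof of Theorem \ref{thm.main-thm}. Your proposed mechanism, $H_y$ containing part of the radical, also does not occur. The radical of $\sum_k a_{ki}c_{ik}$ on $\bC^{2n-1}$ is the $a_{ii}$-line, where $u_i\operatorname{Tr}(A)$ has coefficient $u_i\neq 0$. Case (1) would come out right only because the missing hyperplane happens to coincide with $H_y$ there.

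Your $\mathrm{GL}_n$-orbit reduction in Step 1 is fine. The explicit row computation you mention would expose the equation $w=0$. As written, though, the structural claim of Step 2 is false, and Step 3 consists of expectations that do not hold.
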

     
     \begin{proof}
     	Let $0\not=y=(\bfu,\bfv)\in Y^-$ be a non-zero point.  Without loss of generality, we may assume that $\bfu\not=0$ with $u_i\not=0$ for some $1\leq i\leq n$. According to \S\,\ref{ss.lem.Fibers-center-fiber}, the fiber $F_y$ of $p_+$ is the closed subset of 
     	\[
     	\bC^{2n-1}\ni (c_{i1},\dots,\hat{c}_{ii},\dots,c_{in},a_{1i},\dots,a_{ni})
     	\]
     	defined by the following equations:
     	\begin{equation}
        \label{eq.Fy}
     		\sum_{k=1}^n u_k a_{ki}  = \sum_{k=1}^n v_k c_{ik} = \sum_{k=1}^n a_{ki} c_{ik} = 0,
     	\end{equation}
     	where $c_{ii}=0$. As $\bfu\bfv^t=0$, the required result follows by an easy computation.
     \end{proof}
     
     Let $Y^+_{\bfx}$ be the linear section $Y^+\cap \{\bfy=0\}$ of $Y^+$ and let $N_{\bfx}\coloneq \pi_+^{-1}(Y^+_{\bfx})\cap N$. We also define the varieties $Y^+_{\bfy}$, $Y^-_{\bfu}$, $Y^-_{\bfv}$ and $N_{\bfy}$ in a similar way.
     
     \begin{cor}
     	\label{cor.dim-G+}
     	The dimension of $G_+$ is $4n-5$ for $n\geq 3$, and $4$ for $n=2$.  
     \end{cor}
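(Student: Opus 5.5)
We compute $\dim G_+$ by fibering $G_+$ over its image under $p_+=\pi_-|_{G_+}\colon G_+\to Y^-$, and using Lemma~\ref{lem.Fibers-center-fiber} for the fiber dimensions. First we identify the points of $G_+$. A point lies in $F_+$ exactly when $\bfx=\bfy=0$. So $G_+$ is the set of $\xi\in N$ of the form \eqref{eq.Form-xi} with $\bfx=\bfy=0$. Its image under $p_+$ is the point $(\bfu,\bfv)\in Y^-$. We stratify $Y^-\cong Y=\{\bfu\bfv^t=0\}$ into three pieces: the vertex $\{0\}$; the locus $S_1$ where exactly one of $\bfu,\bfv$ vanishes; and the locus $S_2$ where both are non-zero. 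Then $\dim S_1=n$, and $\dim S_2=2n-1$ is the dimension of the open part of the quadric cone. Hence $\dim G_+$ is the maximum of the dimensions of the three preimages.

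\textbf{Contributions of $S_1$ and $S_2$.} Over $S_1$, Lemma~\ref{lem.Fibers-center-fiber}(1) says each fiber is a quadric hypersurface in $\bC^{2n-2}$, so it has dimension $2n-3$. This contributes $n+(2n-3)=3n-3$. Over $S_2$, for $n\geq 3$, Lemma~\ref{lem.Fibers-center-fiber}(2) gives fibers of dimension $2n-4$. This contributes $(2n-1)+(2n-4)=4n-5$. For $n=2$, Lemma~\ref{lem.Fibers-center-fiber}(3) gives one-dimensional fibers, contributing $3+1=4$.

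\textbf{The vertex.} Over $0\in Y^-$ we have $\bfu=\bfv=\bfx=\bfy=0$. The condition $\xi^2=0$ then forces $w=0$: the $(1,1)$ entry of $\xi^2$ is $w^2$ plus terms involving $\bfu,\bfv,\bfx,\bfy$, all of which vanish. So the preimage of the vertex is $N\cap[\fl,\fl]$. By the isomorphism $\overline{\bfO}_n\cap\fl\cong\overline{\bfO}_{n-1}$ from the proof of Lemma~\ref{lem.Orbits-Z2}, it is a hyperplane section of $\overline{\bfO}_{n-1}\subset\mathfrak{so}_{2n}$. We have $\dim\overline{\bfO}_{n-1}=4n-6$, so this preimage has dimension at most $4n-6$; Lemma~\ref{lem.Orbits-Z2} gives exactly $4n-7$. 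For $n=2$, Remark~\ref{rm.barNZ2} shows that $N\cap[\fl,\fl]$ has dimension $2$.

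\textbf{Conclusion.} Taking maxima: for $n\geq 3$ we have $4n-5>\max\{3n-3,\,4n-6\}$, so $\dim G_+=4n-5$. For $n=2$ the three contributions are $3$, $4$ and $2$, so $\dim G_+=4$. In both cases the top-dimensional piece is the preimage of $S_2$, which is non-empty because $S_2$ is non-empty and each fiber is non-empty: it is a quadric cone containing $0$, or a copy of $\bC$.

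The main obstacle is only bookkeeping. We must correctly compute the fiber dimensions from the equations \eqref{eq.Fy}: over $S_2$ these are three equations, but for $n\geq3$ two of them are independent linear ones, cutting $\bC^{2n-1}$ down to $\bC^{2n-3}$, and the third is the quadric. We also need the vertex fiber to be dimensionally subdominant. Both points follow directly from the results cited above.
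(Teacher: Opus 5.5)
Your proposal is correct and follows essentially the same route as the paper. You stratify $Y^-$ into the vertex, the locus where exactly one of $\bfu,\bfv$ vanishes, and the locus where both are non-zero; you add base and fiber dimensions using Lemma~\ref{lem.Fibers-center-fiber}; and you handle the vertex via $N\cap[\fl,\fl]$ and Lemma~\ref{lem.Orbits-Z2} and Remark~\ref{rm.barNZ2}. Your extra details (that $\xi^2=0$ forces $w=0$ over the vertex, and that the cruder bound $4n-6$ already suffices there) are correct and only make explicit what the paper leaves implicit.
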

 
 \begin{proof}
 	One observes
 	\[
 	\dim G_+ = \max \left(\dim p_+^{-1}\left(Y^-\setminus (Y^-_{\bfu}\cup Y^-_{\bfv})\right), \dim p_+^{-1}\left((Y^-_{\bfu}\cup Y^-_{\bfv})\setminus \{0\}\right), \dim p_+^{-1}(0)\right).
 	\]
 	Moreover, note that $p_+^{-1}(0)$ is actually isomorphic to $N\cap \fl = N\cap [\fl,\fl]$. Thus one derives from Remark \ref{rm.barNZ2}  and Lemma \ref{lem.Fibers-center-fiber} that
 	\[
 	\dim G_+ = \max\{d(n),3n-3,d'(n)\},
 	\]
 	where
 	\[
 	d(n)=
 	\begin{dcases}
 		4n-5 & \text{if}\; n\geq 3 \\
 		4    & \text{if}\; n=2
 	\end{dcases}
 	\quad \text{and}\quad
 	d'(n)=
 	\begin{dcases}
 		4n-7 & \text{if}\; n\geq 3\\
 		2    & \text{if}\; n=2.
 	\end{dcases}
 	\]
 	This yields that $\dim G_+ = d(n)$.
 \end{proof}

\subsection{Proofs of Theorems \ref{t.TOn-symp-res} and \ref{thm.main-thm}}

Now we are in the position to finish the proof of our main theorems. 

\begin{proof}[Proof of Theorem \ref{thm.main-thm}]
	Denote by $Y^+_{\diamond}$ the open subset $Y^+\setminus (Y^+_{\bfx}\cup Y^+_{\bfy})$. Then the restricted map
	\[
	q\coloneq q_{Y^+}|_{Y^+_{\diamond}} \coloneq Y^+_{\diamond}\longrightarrow \bO_n
	\]
	is a surjective geometric quotient, where $q_{Y^+}\colon Y^+\rightarrow \overline{\bO}_n$ is the quotient map induced by $q_Y\colon Y\rightarrow \overline{\bO}_n$. The cotangent map $(dq)^*$ induces a map
	\[
	\alpha\colon q^*(T^*\bO_n) \xlongrightarrow{(dq)^*} T^*(Y^+_{\diamond}) \xlongrightarrow{\cong} \pi_+^{-1}(Y^+_{\diamond})
	\]
	such that $\operatorname{Im}(\alpha)=N\cap \pi_+^{-1}(Y^+_{\diamond})\eqqcolon N^+_{\diamond}$ and the induced morphism 
    \[
    q^*(T^*(\bO_n))\longrightarrow N^+_{\diamond}
    \]
    is an isomorphism. As $n\geq 3$, by Corollary \ref{cor.dim-G+}, we have
	\begin{align*}
		\dim N\setminus N^+_{\diamond} 
		         &  = \max\left\{\dim G_+, \dim \pi_+^{-1}\left((Y^+_{\bfx}\cup Y^+_{\bfy})\setminus\{0\}\right)\right\} \\
		         &  =  \max\{4n-5,3n-2\}\\
		         &  = 4n-5.
	\end{align*}
	It follows that $N\setminus N^+_{\diamond}$ has codimension two in $N$. Note that $\pi_N\colon N\rightarrow M\coloneqq N/\!/\bC^*$ is a geometric quotient over $N_{\diamond}^+$, one gets 
	\[
	\pi_N^{-1}\left(\pi_N(N^+_{\diamond})\right) = N^+_{\diamond}.
	\]
	Therefore $\pi_N(N\setminus N^+_{\diamond})$ has codimension at least two in $M$, and hence the affine closure of $\pi_N(N^+_{\diamond})$ is isomorphic to $M$. In particular, the isomorphism $q^*(T^*\bO_n)\rightarrow N^+_{\diamond}$ yields a natural isomorphism $T^*\bO_n\rightarrow \pi_N^+(N^+_{\diamond})$, which then implies that $\overline{T^*\bO_n}^{\aff}$ is isomorphic to $M$.
\end{proof}

\begin{rem}
\label{rem.n=2}
	Theorem \ref{thm.main-thm} does not hold for $n=2$. Indeed, if $n=2$, then the minimal nilpotent orbit closure $\overline{\bO}_2$ is isomorphic to $\bC^2/\langle\pm 1\rangle$ so that $\overline{T^*\bO_{2}}^{\aff}$ is isomorphic to $\bC^4/\langle \pm 1\rangle$. Nevertheless, by Proposition \ref{prop.Sing-M(C*)}, the singular locus of $M$ has codimension two, so $M$ is not isomorphic to $\overline{T^*\bO_2}^{\aff}$. On the other hand, one can also derive from Corollary \ref{cor.dim-G+} that $N\setminus N_{\diamond}^+$ has three irreducible components of codimension one, which provides a geometric interpretation of this fact (cf.~\cite[\S\,3]{LevasseurStafford1999}).
\end{rem}

\begin{proof}[Proof of Theorem \ref{t.TOn-symp-res}]
    By Theorems \ref{thm.M-Geometry} and \ref{thm.main-thm}, it remains to consider the case $n=2$. Indeed, as explained in Remark \ref{rem.n=2}, the variety $\overline{T^*\bO_2}^{\aff}$ is isomorphic to $\bC^4/\langle \pm 1\rangle$, which is $\bQ$-factorial and terminal. Hence it does not admit any symplectic resultions by \cite[Corollary 1.3]{Fu03}.
\end{proof}

As the second application of Theorem \ref{thm.main-thm}, we slightly strengthen \cite[Theorem 0.1]{LevasseurStafford1999}. We refer the reader to \cite[\S\,5]{FuLiu2026} for the related basic definitions.

\begin{cor}
	Let $\pi_Y\colon  Y\rightarrow \overline{\bO}_n$ be the quotient map in \eqref{eq.Quot-Y}. If $n\geq 3$, the push-forward map
	\[
	(\pi_Y)_*\colon \cD(Y)\longrightarrow \cD(\overline{\bO}_n)
	\]
	is graded surjective and the graded sympol map
	\[
	\overline{\sigma}\colon \gr \cD(\overline{\bO}_n) \longrightarrow \cO(T^*\bO_n)
	\] 
	is surjective.
\end{cor}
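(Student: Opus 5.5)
The plan is to reduce the corollary to the general quantum-Hamiltonian-reduction machinery of \cite[\S\,5]{FuLiu2026}, fed with the geometric input of Theorem~\ref{thm.main-thm}. Here $Y$ is the quadric cone in $\bC^{2n}$, with $\bC^*$ acting by weights $(1^n,(-1)^n)$. The cotangent bundle $T^*\bC^{2n}$ with the induced Hamiltonian $\bC^*$-action is the natural ambient space. The ring $\cD(Y)$ is known explicitly (Levasseur--Stafford), and its associated graded is related to $\cO(T^*Y_{\reg})$.

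\emph{Step 1: identify the quasi-classical reduction.} Proposition~\ref{p.symiso} gives a $\bC^*$-equivariant symplectic isomorphism $U^+\cong T^*Y^+_{\reg}$. Since $\codim(\overline{\bfO}_n\setminus U^+)\geq 2$ and $\overline{\bfO}_n$ is normal, this yields
\[
\cO(T^*Y_{\reg})=\cO(\overline{\bfO}_n),
\]
compatibly with the moment map $\mu=\operatorname{Tr}(A)$. Hence the classical Hamiltonian reduction of $T^*Y_{\reg}$ is $\bigl(\cO(\overline{\bfO}_n)/(\mu)\bigr)^{\bC^*}=\cO(N)^{\bC^*}=\cO(M)$. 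By Theorem~\ref{thm.main-thm} (this is where $n\geq 3$ enters), $\cO(M)\cong\cO(T^*\bO_n)$. Unwinding the proof of Theorem~\ref{thm.main-thm}, this isomorphism is induced by $(dq)^*$, i.e.\ it is the symbol-level version of push-forward along $\pi_Y$.

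\emph{Step 2: transfer to the quantum side.} The quantum Hamiltonian reduction $\bigl(\cD(Y)/\cD(Y)E\bigr)^{\bC^*}$, with $E$ the Euler-type operator quantizing $\mu$, maps to $\cD(\overline{\bO}_n)$ via $(\pi_Y)_*$: invariant operators preserve $\cO(Y)^{\bC^*}=\cO(\overline{\bO}_n)$, and $E$ acts by zero on invariants. The result from \cite[\S\,5]{FuLiu2026} that I would invoke is the following. Suppose
\begin{itemize}
\item $\gr\cD(Y)\cong\cO(T^*Y_{\reg})$,
\item $\mu$ is a non-zero-divisor, so $\gr$ commutes with quotienting by $E$ and with taking invariants (using reductivity of $\bC^*$),
\item $\operatorname{codim}$ estimates hold making the reduced symplectic variety normal.
\end{itemize}
Then $\gr$ of the reduced algebra is $\cO(N)^{\bC^*}$. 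Composing with the symbol map gives a commutative square
\[
\gr\bigl(\cD(Y)/\cD(Y)E\bigr)^{\bC^*}\longrightarrow\gr\cD(\overline{\bO}_n)\xrightarrow{\ \overline{\sigma}\ }\cO(T^*\bO_n).
\]
The composite equals the isomorphism $\cO(M)\cong\cO(T^*\bO_n)$ of Step~1. Hence $\overline{\sigma}$ is surjective. Since $\overline{\sigma}$ is always injective on $\gr$ of a differential-operator filtration (symbols of order exactly $k$ are nonzero), it is an isomorphism. Therefore the first map is surjective, which is precisely graded surjectivity of $(\pi_Y)_*$; ordinary surjectivity then follows by induction on the filtration.

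\emph{Main obstacle.} The hardest point is verifying the hypothesis $\gr\cD(Y)\cong\cO(T^*Y_{\reg})=\cO(\overline{\bfO}_n)$ with the correct compatible filtrations, and checking that the symbol filtration on $\cD(\overline{\bO}_n)$ matches the grading on $\cO(T^*\bO_n)$ under $(dq)^*$. For the first, I would use Levasseur--Stafford's description of $\cD(Y)$ as a quotient of $U(\mathfrak{so}_{2n+2})$ with associated graded $\cO(\overline{\bfO}_n)$, which is exactly the quantum counterpart of Theorem~\ref{thm.Quadric}. The second is a local computation on $Y^+_\diamond$, where $q$ is a principal $\bC^*$-bundle and everything is explicit.
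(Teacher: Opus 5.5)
Your argument is correct and is essentially the paper's. It uses the same commutative square: the top row comes from $\overline{\sigma}_Y\colon\gr\cD(Y)\xrightarrow{\sim}\cO(\overline{\bfO}_n)$ together with reductivity of $\bC^*$, the right column is the isomorphism $\cO(M)\cong\cO(T^*\bO_n)$ of Theorem~\ref{thm.main-thm}, and injectivity of $\overline{\sigma}$ closes the chase. The only difference is that you pass through the quantum reduction $\bigl(\cD(Y)/\cD(Y)E\bigr)^{\bC^*}$ and prove its associated graded is isomorphic to $\cO(N)^{\bC^*}$, whereas the paper works directly with $\gr\bigl(\cD(Y)^{\bC^*}\bigr)\cong\cO(\overline{\bfO}_n)^{\bC^*}$ and needs only its automatic surjection onto $\cO(N)^{\bC^*}$, so your non-zero-divisor step is unnecessary.
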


\begin{proof}
    By \cite[Theorem 4.2 and Remark 4.3]{FuLiu2025}, the natural graded map 
    \[
    \overline{\sigma}_Y\colon \gr \cD(Y)\rightarrow \cO(T^*Y_{\reg})\cong \cO(\overline{\bfO}_{n})
    \]
    is an isomorphism. Then we get the following commutative diagram:
    \begin{equation}
    \label{eq.diff-operators}
        \begin{tikzcd}[row sep=large, column sep=large]
        \gr \left(\cD(Y)^{\bC^*}\right)  \arrow[d,"{\gr(\pi_Y)_*}" left] \arrow[r]
            &  \cO(N)^{\bC^*}\cong \cO(M) \arrow[d] \\
        \gr \cD(\overline{\bO}_n) \arrow[r,"{\overline{\sigma}}"]
            & \cO(T^*\bO_n) 
    \end{tikzcd}
    \end{equation}
    Since $\bC^*$ is reductive, one gets an isomorphism $\gr\left(\cD(Y)^{\bC^*}\right)\cong \left(\gr \cD(Y)\right)^{\bC^*}$. Then the isomorphism $\overline{\sigma}_Y$ implies that the first row in \eqref{eq.diff-operators} is surjective. On the other hand, the right hand column in \eqref{eq.diff-operators} is an isomorphism by Theorem \ref{thm.main-thm}. As $\overline{\sigma}$ is injective, it follows that both $\gr(\pi_Y)_*$ and $\overline{\sigma}$ are surjective, as desired.
\end{proof}

\subsection{Divisor class group and $\bQ$-factorial terminalizations}

Firstly, we give a description of the Weil divisor class group and the Picard group of $\overline{T^*\bO_n}^{\aff}$, using the following general observation.

\begin{prop}
\label{p.Divgp}
    Let $Z$ be an irreducible affine normal variety such that $\overline{T^*Z_{\reg}}$ is an affine variety. Denote by $p\colon \overline{T^*Z_{\reg}}^{\aff}\rightarrow Z$ the natural projection. Then the induced morphisms
    \[
    \textup{WDiv}(Z)\longrightarrow \textup{WDiv}(\overline{T^*Z_{\reg}}^{\aff})\quad \text{and}\quad \Pic(Z) \longrightarrow \Pic(\overline{T^*Z_{\reg}}^{\aff})
    \]
    are isomorphisms.
\end{prop}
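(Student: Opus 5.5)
We write $W\coloneqq \overline{T^*Z_{\reg}}^{\aff}$ and read the hypothesis as saying that $\sO(T^*Z_{\reg})$ is finitely generated, so $W$ is an affine normal variety. The plan is to compare both groups on the open set $T^*Z_{\reg}\subset W$, using that a vector bundle does not change divisor classes.

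\textbf{Step 1: the boundary has codimension at least two.} The natural map $T^*Z_{\reg}\to W$ is an open immersion, and its complement has codimension $\geq 2$. This holds because $W$ is normal and every regular function on $T^*Z_{\reg}$ extends to $W$. If the complement contained a divisor, then (after shrinking to an affine open subset of $W$) there would be a rational function with a pole only along that divisor. Such a function is regular on $T^*Z_{\reg}$ but not on $W$, a contradiction. Restriction therefore gives an isomorphism
\[
\textup{WDiv}(W)\cong \textup{WDiv}(T^*Z_{\reg}).
\]
In the same way, $\codim(Z\setminus Z_{\reg})\geq 2$ by normality of $Z$, so $\textup{WDiv}(Z)\cong\textup{WDiv}(Z_{\reg})=\Pic(Z_{\reg})$.

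\textbf{Step 2: the vector bundle.} Since $T^*Z_{\reg}\to Z_{\reg}$ is a vector bundle over a smooth variety, the pullback $\Pic(Z_{\reg})\to\Pic(T^*Z_{\reg})$ is an isomorphism; this is homotopy invariance of class groups for affine bundles. Combining this with Step 1 gives the Weil divisor statement. We must also check that the composite is the map "induced" by $p$. Here $p$ is the affinization of the bundle projection, extended via $\sO(Z)\subset\sO(W)$, so pulling back divisors along $p$ restricted to $T^*Z_{\reg}$ is exactly the bundle pullback.

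\textbf{Step 3: Picard groups.} The main obstacle is the claim for $\Pic$, since line bundles do not extend across codimension-two loci in the same naive way. Injectivity follows from injectivity on class groups, because Cartier divisors on a normal variety embed into Weil divisor classes. For surjectivity, take a Cartier class $L$ on $W$ and let $D$ be the Weil class on $Z$ corresponding to it. We need $D$ to be Cartier.

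The first thing to try is the cone structure. The $\bC^*$-scaling on the fibres of $T^*Z_{\reg}$ extends to $W$. The zero section gives a closed embedding $s\colon Z\hookrightarrow W$ with $p\circ s=\mathrm{id}$, because the fibre-degree-zero functions $\sO(W)_0=\sO(Z_{\reg})=\sO(Z)$ form a retract of $\sO(W)$. Then $s^*L$ is a line bundle on $Z$. Its restriction to $Z_{\reg}$ agrees with the class of $D$, since $s$ restricted to $Z_{\reg}$ is the zero section and $s^*p^*=\mathrm{id}$ there. By Step 1, a line bundle on the normal variety $Z$ is determined by its restriction to $Z_{\reg}$, so $s^*L\cong\sO_Z(D)$ and $D$ is Cartier. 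Finally, $p^*s^*L$ and $L$ agree on $T^*Z_{\reg}$, hence are isomorphic by normality of $W$. This gives surjectivity.
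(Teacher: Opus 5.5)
Your proof is correct and follows essentially the same route as the paper. You pass to $T^*Z_{\reg}$ using that the boundary of $\overline{T^*Z_{\reg}}^{\aff}$ has codimension at least two (the paper cites Grosshans, while you argue via normality), use that the cotangent bundle does not change class groups, and treat $\Pic$ via the zero section $s$ with $p\circ s=\mathrm{id}$, where your isomorphism $L\cong p^*s^*L$ is equivalent to the paper's injectivity of $s^*$.

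The only blemish is the parenthetical shrinking in Step 1, which you should drop. A function produced on a smaller affine open need not be regular on all of $T^*Z_{\reg}$. Since $W$ is already affine, $\sO_W(D)/\sO_W$ is a nonzero coherent sheaf on $W$, which directly gives a global function regular on $W\setminus D$ but not on $W$.
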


\begin{proof}
    Let $\bfZ\subset\overline{T^*Z_{\reg}}^{\aff}$ be the zero section. Then the induced morphism $\bfZ\rightarrow Z$ is an isomorphism. On the other hand, by \cite[Theorem 4.2]{Grosshans1997}, the natural morphism 
    \[
    T^*Z_{\reg}\longrightarrow \overline{T^*Z_{\reg}}^{\aff}
    \]
    is an open embedding with complement of codimension at least two, so the pull-back of Weil divisors are well-defined by restricting to $T^*Z_{\reg}\rightarrow Z_{\reg}$. Note that the following composition of homomorphisms 
    \[
    \text{WDiv}(Z)\xlongrightarrow{p^*} \text{WDiv}(\overline{T^*Z_{\reg}}^{\aff}) \cong \text{WDiv}(T^*Z_{\reg}) \cong\text{WDiv}(\bfZ_{\reg})\cong \text{WDiv}(\bfZ),
    \]
    is an isomorphism, where the isomorphism $\text{WDiv}(T^*Z_{\reg})\cong \text{WDiv}({\bfZ}_{\reg})$ is given by the restriction. Hence the pull-back $p^*$ is an isomorphism. 

    Similarly, for the Picard group, we consider the following isomorphism:
    \[
    \Pic(Z)\xlongrightarrow{p^*} \Pic(\overline{T^*Z_{\reg}}^{\aff}) \longrightarrow \Pic(\bfZ)
    \]
    Note that the second map is injective by the injectivity of the map between the corresponding Weil divisor class groups, therefore $p^*$ also induces an isomorphism of Picard groups.
\end{proof}

\begin{cor}
\label{cor.Q-factoriality}
    If $n\geq 3$, then $\textup{WDiv}(\overline{T^*\bO_n}^{\aff})\cong \bZ$ and $\Pic(\overline{T^*\bO_n}^{\aff})\cong\{e\}$. In particular, the affine closure $\overline{T^*\bO_n}^{\aff}$ is not $\bQ$-factorial for $n\geq 3$.
\end{cor}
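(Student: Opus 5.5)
By Proposition~\ref{p.Divgp} applied to $Z=\overline{\bO}_n$, the plan is to reduce the statement to the divisor class group and Picard group of the minimal nilpotent orbit closure $\overline{\bO}_n\subset\mathfrak{sl}_n$ itself. First we check the hypotheses of that proposition. $\overline{\bO}_n$ is an irreducible affine normal variety, since minimal nilpotent orbit closures are normal. Its regular locus is $\bO_n$, because the singular locus is the origin. The affine closure $\overline{T^*\bO_n}^{\aff}$ is an affine variety: by Theorem~\ref{thm.main-thm} it is isomorphic to $M$, so its coordinate ring is finitely generated. Proposition~\ref{p.Divgp} then gives
\[
\textup{WDiv}(\overline{T^*\bO_n}^{\aff})\cong \textup{WDiv}(\overline{\bO}_n)\quad\text{and}\quad \Pic(\overline{T^*\bO_n}^{\aff})\cong \Pic(\overline{\bO}_n).
\]

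To compute the class group of $\overline{\bO}_n$, we use the quotient description \eqref{eq.Quot-Y}: $\overline{\bO}_n=Y/\!/\bC^*$, where $Y\subset\bC^{2n}$ is the quadric cone $\bfx\bfy^t=0$. Alternatively, $\overline{\bO}_n$ is the affine cone over $\bP(\bO_n)\subset\bP(\mathfrak{sl}_n)$. This projective variety is the incidence variety $\{(p,H)\mid p\in H\}\subset\bP^{n-1}\times(\bP^{n-1})^*$, embedded by $\cO(1,1)$, and for $n\geq3$ its Picard group is $\bZ^2$. For a normal projective variety $V$ embedded by $L$ with projectively normal cone, we have $\textup{Cl}(\text{cone})\cong\Pic(V)/\bZ L$. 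Applying this gives $\bZ^2/\bZ(1,1)\cong\bZ$, and the generator is the class of the divisor $\{x_1=0\}$ coming from one factor. Equivalently, in the quotient picture, the Weil divisor $\{\theta\mid \text{row}_1(\theta)=0\}$ pulls back to the Cartier divisor $\{x_1=0\}\cup\ldots$. The grading by $\bC^*$-weights gives a surjection $\bZ\to\textup{Cl}(\overline{\bO}_n)$, and injectivity holds because $\cO(Y)$ is a UFD for $n\geq3$: $Y$ is a quadric of rank $2n\geq 5$.

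For the Picard group, we note that $\overline{\bO}_n$ is a cone, graded with vertex $0$. Every graded line bundle on an affine cone is trivial, since a locally free rank one module over a positively graded ring with $R_0=\bC$ is free. Hence $\Pic(\overline{\bO}_n)=\{e\}$.

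Non-$\bQ$-factoriality then follows: a generator $D$ of $\textup{WDiv}\cong\bZ$ has infinite order, yet no multiple $mD$ with $m\neq0$ is Cartier, because $\Pic$ is trivial. Otherwise $mD$ would be principal, contradicting that $D$ has infinite order in the class group.

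The main obstacle is the class group computation, specifically the injectivity of $\bZ\to\textup{Cl}(\overline{\bO}_n)$. This means showing that no nonzero multiple of the generator is principal, which relies on factoriality of $\cO(Y)$ via Samuel's or Nagata's criterion for $\bC^*$-quotients, or equivalently on the cone formula above.
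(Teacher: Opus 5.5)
Your reduction is the paper's own: apply Proposition~\ref{p.Divgp} to $Z=\overline{\bO}_n$ and use $\textup{WDiv}(\overline{\bO}_n)\cong\bZ$ and $\Pic(\overline{\bO}_n)=\{e\}$, which the paper simply quotes as known facts. Your cone-formula computation $\textup{Cl}(\overline{\bO}_n)\cong\bZ^2/\bZ(1,1)\cong\bZ$ is correct. In the quotient picture you have the roles reversed. Factoriality of $\cO(Y)$ is what gives \emph{surjectivity} of $\bZ\to\textup{Cl}(\overline{\bO}_n)$. Injectivity comes from $x_1$ being prime in $\cO(Y)$, from $\cO(Y)^*=\bC^*$, and from a weight argument. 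The cone formula makes this point moot.

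The genuine gap is the Picard group. You only show that \emph{graded} line bundles on $\overline{\bO}_n$ are trivial. The justification you give, that any locally free rank one module over a positively graded ring with $R_0=\bC$ is free, is false without the grading hypothesis. For example, $\bC[t^2,t^3]$ is positively graded with $R_0=\bC$, yet $\Pic(\bC[t^2,t^3])\cong(\bC,+)$. Your non-$\bQ$-factoriality step depends on $\Pic=\{e\}$, so you must explain why every line bundle on $\overline{\bO}_n$ is graded, and this is where normality is needed.

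There are two ways to close the gap.
\begin{enumerate}
\item Invoke the fact that on a normal variety with a $\bC^*$-action every line bundle is $\bC^*$-linearizable, since $\Pic(\bC^*)=0$. Graded Nakayama then applies.
\item Argue directly. Any Cartier class lies in $\textup{Cl}(\overline{\bO}_n)=\bZ\,[D]$, where $D=\{\theta\mid \text{row}_1(\theta)=0\}$ is $\bC^*$-stable. If $mD$ is Cartier, then $\cO(mD)$ is a graded invertible module, hence free by graded Nakayama. So $mD$ is principal, which forces $m=0$.
\end{enumerate}
The second route gives $\Pic(\overline{\bO}_n)=\{e\}$ and non-$\bQ$-factoriality at once.
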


\begin{proof}
    This follows directly from Proposition \ref{p.Divgp} and the fact that $\textup{WDiv}(\overline{\bfO}_n)\cong \bZ$ and $\Pic(\overline{\bO}_n)\cong \{e\}$ for $n\geq 3$.
\end{proof}

\begin{rem}
    Recall that the minimal nilpotent orbit closure $\overline{\bO}_2$ is isomorphic to $\bC^2/\langle \pm 1\rangle$ with $\textup{WDiv}(\overline{\bO}_2)\cong \bZ_2$ and $\Pic(\overline{\bO}_2)\cong \{e\}$. Thus $\overline{T^*\bO_2}^{\aff}$ is $\bQ$-factorial by Proposition \ref{p.Divgp}. 
\end{rem}

\begin{cor}
	\label{cor.Q-fact-ter}
	If $n\geq 3$, the symplectic $\overline{T^*\bO_n}^{\aff}$ admits only two non-isomorphic $\bQ$-factorial terminalizations.
\end{cor}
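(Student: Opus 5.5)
By Theorem~\ref{thm.main-thm}, we may replace $\overline{T^*\bO_n}^{\aff}$ by $M=N/\!/\bC^*$ throughout. The plan is to show that $\mu_+\colon M^+\to M$ and $\mu_-\colon M^-\to M$ are $\bQ$-factorial terminalizations, and that every $\bQ$-factorial terminalization is isomorphic over $M$ to one of them. We then check that $M^+\not\cong M^-$ over $M$.

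\textbf{Step 1: $M^\pm$ are terminalizations.} $M^+$ is $\bQ$-factorial terminal by the lemma preceding Lemma~\ref{lem.fibers-mu+}, and $\mu_+$ is projective and birational. By Lemma~\ref{lem.fibers-mu+}, for $n\ge3$ the map $\mu_+$ is small, so it is crepant. The same holds for $\mu_-$ by symmetry.

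\textbf{Step 2: every terminalization is one of them.} By Namikawa's theory (the movable cone of a conical symplectic variety is decomposed into the ample cones of its $\bQ$-factorial terminalizations, all connected by flops), the number of $\bQ$-factorial terminalizations is controlled by the relative Picard group. Any two terminalizations are isomorphic in codimension one, so their Weil class groups agree with $\textup{WDiv}(M)\cong\bZ$ (Corollary~\ref{cor.Q-factoriality}). Hence $N^1(M^+/M)$ has rank one, and its movable cone is a one-dimensional real space divided by the ray $\{0\}$ into at most two chambers. Each chamber corresponds to at most one terminalization $\mathrm{Proj}\bigoplus_{d\ge0}\sO(dD)$, with $D$ a generator or its negative. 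This gives at most two terminalizations. Concretely, the generator of $\textup{WDiv}(M)$ is represented by the weight-one semi-invariants: the rank-one reflexive sheaves $R_{\pm d}$ on $M$ correspond to the classes $\pm dD$. Since $N\to M$ is a geometric quotient off a locus of codimension $\ge2$ in $M$, and $\bC^*$ acts freely in codimension one on $N$, we get $\textup{WDiv}(M)\cong\bZ$ generated by $R_1$. So the two chambers give exactly $\Proj\bigoplus_{d\ge0} R_{\pm d}=M^\pm$.

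\textbf{Step 3: $M^+\not\cong M^-$ over $M$.} Under $\mu_\pm$, the class $D$ is $\mu_+$-ample on $M^+$ and $\mu_-$-anti-ample on $M^-$. An isomorphism over $M$ would have to preserve the strict transform of $D$, so the same class would be both relatively ample and anti-ample, which is impossible since neither $\mu_\pm$ is an isomorphism: both have positive-dimensional fibres by Lemma~\ref{lem.fibers-mu+}. Finally, $M^+$ and $M^-$ are not even abstractly isomorphic as $M$-schemes. If one wants to exclude abstract isomorphisms as well, one can compare the central fibres with the fibres over $M_{(\bC^*)}$. I expect this symmetric structure to force only isomorphism via an automorphism of $M$, which should not identify the two, but this extra point may simply be excluded by the convention of counting terminalizations over $M$.

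\textbf{Main obstacle.} The main obstacle is Step 2: justifying that every $\bQ$-factorial terminalization arises from a chamber of the movable cone (Namikawa's result for conical symplectic varieties), and identifying the generator of the class group with the $\bC^*$-weight-one line bundle. For the latter, I would argue by descending $\sO_N\otimes\chi$ along the quotient on the locus where $\bC^*$ acts freely, which has complement of codimension $\ge2$ by the dimension counts in Lemma~\ref{lem.Eq-N} and Lemma~\ref{lem.Orbits-Z2}.
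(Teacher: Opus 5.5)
Your proposal is correct and follows the paper's route. As in the paper, $M^{\pm}$ from \S\,\ref{ss.Q-fact-term} are small (hence crepant) $\bQ$-factorial terminalizations, and $\textup{WDiv}\cong\bZ$ (Corollary~\ref{cor.Q-factoriality}) forces relative Picard number one, so only the two half-lines of $N^1(M^+/M)_{\mathbb{R}}$ can be ample cones; your Step~3 also supplies the non-isomorphism over $M$ that the paper only asserts. Drop your closing speculation, though: ``non-isomorphic'' has to mean over $M$, because the element of $\mathrm{O}_{2n+2}$ swapping the $\lambda$- and $\lambda^{-1}$-blocks inverts the $\bC^*$ and preserves $N$ (it sends $\operatorname{Tr}(A)\mapsto-\operatorname{Tr}(A)$), so it identifies $M^+$ and $M^-$ as abstract varieties.
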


\begin{proof}
	By Theorem \ref{thm.main-thm} and \S\,\ref{ss.Q-fact-term}, there exist two non-isomorphic small $\bQ$-factorial terminalizations $M^+$ and $M^-$ of $\overline{T^*\bO_n}^{\aff}$. Moreover, by Corollary \ref{cor.Q-factoriality}, the relative Picard numbers of $M^+$ and $M^-$ are both equal to one and therefore $\overline{T^*\bO_n}^{\aff}$ has exactly two different $\bQ$-factorial terminalizations.
\end{proof}

\appendix	

\section{Omitted computational details}

In this appendix, we provide the detailed computations omitted from the main text.

\subsection{Proof of Proposition \ref{prop.Sing-M(C*)}}
\label{ss.prop.Sing-M(C*)}

Let $U$ be the open subset of $\overline{\bfO}_n\cap\{a_{ij}\not=0\}$, where $i\not=j$. Then $\xi\in U$ if and only if $\rank \xi\leq 2$, $\xi^2=0$ and $a_{ij}\not=0$. In particular, as $a_{ij}\not=0$ and $b_{ii}=c_{ii}=0$, then $\rank \xi\leq 2$ if and only if the rows $\bfr_{i+1}$ and $\bfr_{n+j+2}$ (resp. the columns $\bfl_{j+1}$ and $\bfl_{n+i+2}$) are linearly independent vectors such that the other row (resp. column) of $\xi$ can be written as a linear combination of them. Now $\xi^2=0$ can be reformulated as follows:
	\[
	\bfr_{i+1}\bfl_{j+1} = \bfr_{i+1}\bfl_{n+i+2} = \bfr_{n+j+2}\bfl_{j+1} = \bfr_{n+j+2} \bfl_{n+i+2} = 0.
	\]
	As $C=-C^t$ and $B=-B^t$, a straightforward computation yields
    \begin{equation}
    \label{eq.E1}
        \begin{dcases}
            \bfr_{i+1}\bfl_{j+1} = \bfr_{n+j+2}\bfl_{n+i+2}    = x_i u_j + \sum_{k=1}^n a_{ik} a_{kj} -  y_j v_i - \sum_{k=1}^n b_{ik} c_{jk} = 0 \\
		\bfr_{i+1} \bfl_{n+i+2} = 2\left(x_i v_i - \sum_{k=1}^n a_{ik}b_{ik}\right) = 0 \\
		\bfr_{n+j+2}\bfl_{j+1} =  2\left(-y_j u_j + \sum_{k=1}^n a_{kj} c_{jk}\right) = 0
        \end{dcases}
    \end{equation}
	Since $a_{ij}\not=0$ and $b_{ii}=c_{jj}=0$, it follows that $a_{ii}$, $b_{ij}$ and $c_{ji}$ are determined by the other terms in $\bfr_{i+1}$ and $\bfr_{n+j+2}$. As a conseqence, the variety $U$ is isomorphic to the open subset of 
    \[
    \bC^{4n-2}\coloneqq \{(\widehat{\bfr}_{i+1},\widehat{\bfr}_{n+j+2})\}
    \]
    defined by $a_{ij}\not=0$. where 
	\[
	\hat{\bfr}_{i+1}\coloneq (x_i,a_{i1},\dots,\hat{a}_{ii},\dots,a_{in},v_i,b_{i1},\dots,\hat{b}_{ii},\dots,\hat{b}_{ij},\dots,b_{in}) \in \bC^{2n-1}
	\]
	and
	\[
	\hat{\bfr}_{n+j+2} = (y_j,c_{j1},\dots,\hat{c}_{ji},\dots,\hat{c}_{jj},\dots,c_{jn},u_j,a_{1j},\dots,\hat{a}_{ij},\dots,a_{nj})\in \bC^{2n-1}.
	\]
    On the other hand, for $1\leq k\leq n$, we have
	\[
	a_{kk} = \frac{a_{ik}a_{kj}+b_{ik}c_{jk}}{a_{ij}}.
	\]
	One gets
	\begin{equation}
		\label{eq.N1}
		-a_{ij} \operatorname{Tr}(A) = \sum_{k=1}^n \left(a_{ik}a_{kj}+b_{ik}c_{jk}\right).
	\end{equation}
	In view of the first equality of \eqref{eq.E1}, as $a_{ij}\not=0$, the equation $\operatorname{Tr}(A)=0$ is then equivalent to 
	\begin{equation}
		\label{eq.Trace-Moment-map}
		-a_{ij}\operatorname{Tr}(A) = x_i u_j - v_i y_j - 2\sum_{k=1}^n b_{ik} c_{jk} = 0.
	\end{equation}

\subsection{Proof of Proposition \ref{prop.Sing-MZ2}}
\label{ss.prop.Sing-MZ2}

Let $U$ be the open subset $\overline{\bfO}_n\cap \{b_{ij}\not=0\}$; that is, $\xi\in U$ if and only if $\rank \xi\leq 2$, $\xi^2=0$ and $b_{ij}\not=0$ for some $1\leq i\not=j\leq n$. As $b_{ij}\not=0$ and $b_{ii}=b_{jj}=0$, then $\rank\xi\leq 2$ if and only if $\bfr_{i+1}$ and $\bfr_{j+1}$ (resp. $\bfl_{n+i+2}$ and $\bfl_{n+j+2}$) are linearly independent vectors and the other row (resp. column) of $\xi$ can be written as a linear combination of them. Now $\xi^2=0$ can be reformulated as follows:
\[
	\bfr_{i+1}\bfl_{n+i+2}=\bfr_{j+1}\bfl_{n+i+2}=\bfr_{j+1}\bfl_{n+i+2}=\bfr_{j+1}\bfl_{n+j+2}=0.
\]
As $B=-B^t$, a straightforward computation yields
\begin{equation}
\label{eq.E2}
    \begin{dcases}
    \bfr_{i+1}\bfl_{n+j+2} = \bfr_{j+1} \bfl_{n+i+2}  = x_i v_j - \sum_{k=1}^n a_{ik} b_{jk} + x_j v_i - \sum_{k=1}^n  a_{jk} b_{ik} = 0 \\
    \bfr_{i+1}\bfr_{n+i+2}  = 2\left(x_iv_i - \sum_{k=1}^n a_{ik} b_{ik}\right) = 0 \\
		\bfr_{j+1}\bfl_{n+j+2}   = 2\left( x_j v_j - \sum_{k=1}^n a_{jk} b_{jk} \right) = 0 
\end{dcases}
\end{equation}
On the other hand, for any $1\leq k\leq n$, we have
\[
a_{kk} = \frac{-a_{ik}b_{jk}+a_{jk}b_{ik}}{b_{ij}}
\]
which then implies
\begin{equation}
\label{eq.N2}
    b_{ij}\operatorname{Tr}(A)  = \sum_{k=1}^n \left(-a_{ik}b_{jk} + a_{jk} b_{ik}\right).
\end{equation}
If $b_{ij}\not=0$, then $b_{ij}\operatorname{Tr}(A)=0$ is equivalent to $\operatorname{Tr}(A)=0$. In particular, combining it with \eqref{eq.E2} shows that for any $\xi \in U_N\coloneqq U\cap N$, the terms $a_{ii}$, $a_{ij}$, $a_{ji}$ and $a_{jj}$ are determined by other terms in $\bfr_{i+1}$ and $\bfr_{j+1}$ as $b_{ij}=-b_{ji}\not=0$ and $b_{ii}=b_{jj}=0$. As a consequence, the variety $U_N$ is isomorphic to the open subset of
\[
	\bC^{4n-3}\coloneqq \left\{(\hat{\bfr}_{i+1},\hat{\bfr}_{j+1})\in \bC^{2n-1}\times \bC^{2n-1} \mid b_{ij}=-b_{ji}\right\},
\]
defined by $b_{ij}=-b_{ji}\not=0$, where 
	\[
	\hat{\bfr}_{i+1}\coloneqq (x_i,a_{i1},\dots,\hat{a}_{ii},\dots,\hat{a}_{ij},\dots a_{in}, v_i,b_{i1},\dots,\hat{b}_{ii},\dots,b_{in})\in \bC^{2n-1}
	\]
	and 
	\[
	\hat{\bfr}_{j+1}\coloneqq (x_j,a_{j1},\dots,\hat{a}_{ji},\dots,\hat{a}_{jj},\dots,a_{jn},v_j,b_{j1}\dots,\hat{b}_{jj},\dots,b_{jn})\in \bC^{2n-1}.
	\]

\subsection{Proof of Lemma \ref{lem.fibers-mu+}} 
\label{ss.fibers-mu+}

We will divide the proof into three different cases: $\bfx\not=0$, $\bfv\not=0$ and $B\not=0$.

Firstly, we consider the case where $\bfx\not=0$. Let $U(x_i)$ be the subset of $U$ consisting of $\xi$ with $x_i\not=0$. Then $\bfr_{i+1}$ and $\bfr_{n+2}$ (resp. $\bfl_1$ and $\bfl_{n+i+2}$) are linearly independent such that the other rows (resp. columns) can be written as a linear combination of them. A matrix $\xi$ of the form \eqref{eq.Xi-centralfiber} contained in $U(x_i)$ is determined by the following equations:
\[
\bfr_{i+1}  \bfl_1 = \bfr_{i+1} \bfl_{n+i+2} = \bfr_{n+2} \bfl_1 = \bfr_{n+2} \bfl_{n+i+2} = 0.
\]
Then an easy computation shows that for $\xi$ of the form \ref{eq.Xi-centralfiber}, the only non-trivial equation above is
\[
\bfr_{i+1} \bfl_{n+i+2} = 2x_i v_i=0.
\]
In particular, as $x_i\not=0$, we get $v_i=0$ and hence $U(x_i)$ can be identified to the subset of the vector space
\[
\bC^{2n-1} = \{(x_1,\dots,x_n,b_{i1},\dots,\hat{b}_{ii},\dots,b_{in})\}
\]
defined by $x_i\not=0$. Denote by $U_{\bfx}$ the closure of $U(x_i)$ in $N^{\text{ss}}$. As $U(x_j)\cap U(x_i)\not=\emptyset$ for any $j$, the variety $U_{\bfx}$ is also the closed closure of $U(x_j)$. On the other hand, since $v_i=0$ and the other row is a linear combination of $\bfr_i$ and $\bfr_{n+2}$, the variety $U_{\bfx}$ is a closed subset of dimension $2n-1$ contained in $N^{\textup{ss}}\cap \{\bfv=0\}$.

Next we consider the case where $\bfv\not=0$. Let $U(v_i)$ be the open subset of $U$ consisting of $\xi$ with $v_i\not=0$. As in the case $\bfx\not=0$, we can show that $U(v_i)$ is isomorphic to the subset of 
\[
\bC^{2n-1}=\{(v_1,\dots,v_n,b_{i1},\dots,\hat{b}_{ii},\dots,b_{in})\}
\]
defined by $v_i\not=0$. Denote by $U_{\bfv}$ the closure of $U(v_i)$ in $N^{\textup{ss}}$. Then $U_{\bfv}$ is independent of $i$ and it is contained in $N^{\textup{ss}}\cap \{\bfx=0\}$.

Now we consider the case where $B\not=0$. Let $U(b_{ij})$, $1\leq i<j\leq n$, be the open subset of $U$ consisting of $\xi$ with $v_i\not=0$. Then a matrix $\xi$ of the form \eqref{eq.Xi-centralfiber} contained in $U(b_{ij})$ is determined by the following equations:
\[
\bfr_{i+1}\bfl_{n+i+2} = \bfr_{i+1} \bfl_{n+j+2} = \bfr_{j+1} \bfl_{n+i+2} = \bfr_{j+1} \bfl_{n+j+2} = 0.
\]
Then one easily derives
\[
\begin{dcases}
	\bfr_{i+1} \bfl_{n+i+2} = 2 x_i v_i = 0 \\
	\bfr_{i+1} \bfl_{n+j+2} = \bfr_{j+1} \bfl_{n+i+2} = x_j v_i + v_j x_i = 0\\
	\bfr_{j+1} \bfl_{n+j+2} = 2x_j v_j =0.
\end{dcases}
\]
This implies $x_i=x_j=0$ or $v_i=v_j=0$. Consequently, the variety $U(b_{ij})$ consists of two irreducible components, $U(b_{ij})_{\bfx}$ and $U(b_{ij})_{\bfv}$, which are isomorphic to the locally closed subvarieties 
\[
\{(x_i,x_j,b_{i1},\dots,\hat{b}_{ii},\dots,b_{in},b_{j1},\dots,\hat{b}_{jj},\dots,b_{jn})\mid b_{ij}=-b_{ji}\not=0\} \subset \bC^{2n}
\]
and
\[
\{(v_i,v_j,b_{i1},\dots,\hat{b}_{ii},\dots,b_{in},b_{j1},\dots,\hat{b}_{jj},\dots,b_{jn})\mid b_{ij}=-b_{ji}\not=0\}\subset \bC^{2n},
\]
respectively. Denote by $U_{B,\bfx}$ and $U_{B,\bfv}$ the closures of $U(b_{ij})_{\bfx}$ and $U(b_{ij})_{\bfv}$, respectively, in $N^{\textup{ss}}$. Then $U_{B,\bfx}\subset N^{\textup{ss}}\cap \{\bfv=0\}$ and $U_{B,\bfv}\subset N^{\textup{ss}}\cap \{\bfx=0\}$. Moreover, for any $1\leq k<l\leq n$, as $U(b_{ij})_{\bfx}\cap U(b_{kl})_{\bfx}\not=\emptyset$ and $U(b_{ij})_{\bfv}\cap U(b_{kl})_{\bfv}\not=\emptyset$, the varieties $U_{B,\bfx}$ and $U_{B,\bfv}$ are both independent of $(i,j)$.

Finally, note that $U = U_{\mathbf{x}} \cup U_{\mathbf{v}} \cup U_{B,\mathbf{x}} \cup U_{B,\mathbf{v}}$, each of these four components has dimension $2n-1$ and $U_{\mathbf{x}} \neq U_{\mathbf{v}}$. Observe that $U_{B,\mathbf{x}} \cap U(x_i)$ and $U_{B,\mathbf{v}} \cap U(v_i)$ are non‑empty open subsets of $U_{B,\mathbf{x}}$ and $U_{B,\mathbf{v}}$, respectively. Consequently, $U_{B,\mathbf{x}} = U_{\mathbf{x}}$ and $U_{B,\mathbf{v}} = U_{\mathbf{v}}$, which establishes the desired result.

\subsection{Proof of Lemma \ref{lem.Fibers-center-fiber}}
\label{ss.lem.Fibers-center-fiber}
    Let $U$ be the open subset $\overline{\bfO}_n\cap \{u_i\not=0\}$; that is, $\xi \in U$ if and only if $\rank \xi\leq 2$, $\xi^2=0$ and $u_i\not=0$. One observes $\rank \xi \leq 2$ if and only if the rows $\bfr_1$ and $\bfr_{n+i+2}$ (resp. $\bfl_{i+1}$ and $\bfl_{n+2}$) are linearly independent vectors such that the other row (resp. column) of $\xi$ is a linear combination of them. On the other hand, the equation $\xi^2$ can be reformulated as follows:
    \[
    \bfr_1\bfl_{i+1}=\bfr_{1}\bfl_{n+2}=\bfr_{n+i+2}\bfl_{i+1}=\bfr_{n+i+2}\bfl_{n+2}=0.
    \]
    As $C=-C^t$, then a straightforward computation yields
    \begin{equation}
        \label{eq.E3}
        \begin{dcases}
        \bfr_1\bfl_{i+1} = \bfr_{n+i+2}\bfl_{n+2} = w u_i + \sum_{k=1}^n (u_k a_{ki} - v_k c_{ik}) = 0\\
        \bfr_1\bfl_{n+2} = - 2\sum_{k=1}^n u_k v_k = 0 \\
            \bfr_{n+i+2} \bfl_{i+1} =  - 2 \left(-y_i u_i + \sum_{k=1}^n a_{ki} c_{ik}\right) = 0
        \end{dcases}
    \end{equation}
 	On the other hand, for $1\leq j\leq n$, one gets
 	\[
 	a_{kk} = \frac{u_k a_{ki} + v_k c_{ik}}{u_i},
 	\]
 	which then implies
 	\begin{equation}
 		\label{eq.Fiber-eq2}
 		u_i\operatorname{Tr}(A) =  \sum_{k=1}^n\left( u_k a_{ki} + v_k c_{ik}\right).
 	\end{equation}
    Moreover, one easily derives that an element $\xi\in U$ is contained in $F_+\subset \{\bfx=\bfy=0\}$ if and only if 
    \[
    w=y_i=0.
    \]
    As a consequence, for a given $y\in (\bfu,\bfv)\in Y^-$ with $u_i\not=0$, combining \eqref{eq.E3} with $\operatorname{Tr}(A)=0$, the fiber $F_y$ of $p_+$ over $y$ is the closed subset of  
    \[
    (c_{i1},\dots,\hat{c}_{ii},\dots,c_{in},a_{1i},\dots,a_{ni})\in \bC^{2n-1}
    \]
    defined by \eqref{eq.Fy}.

	\bibliographystyle{alpha}
	\bibliography{SQ}
\end{document}